\documentclass[a4paper,reqno]{amsart}

\usepackage{amsmath, amssymb, amsthm}
\usepackage{enumitem}
\usepackage{hyperref}
\hypersetup{
  colorlinks=false,
  pdfborder={0 0 0}
}

\newtheorem{theorem}{Theorem}[section]
\newtheorem{lemma}[theorem]{Lemma}

\newtheorem{proposition}[theorem]{Proposition}
\newtheorem{corollary}[theorem]{Corollary}

\theoremstyle{definition}
\newtheorem{definition}[theorem]{Definition}

\theoremstyle{definition}
\newtheorem{remark}[theorem]{Remark}

\theoremstyle{definition}
\newtheorem{example}[theorem]{Example}

\theoremstyle{definition}
\newtheorem{assumption}[theorem]{Assumption}

\theoremstyle{definition}

\theoremstyle{definition}
\newtheorem{notation}[theorem]{Notation}

\theoremstyle{plain}
\newtheorem{mainresult}{Main Result}

\makeatletter

\@addtoreset{equation}{subsection}
\makeatother

\makeatletter
\renewenvironment{proof}[1][\proofname]{%
  \par\pushQED{\qed}%
  \normalfont%
  \topsep6\p@\@plus6\p@\relax
  \trivlist
  \item[\hskip\labelsep\textbf{#1\@addpunct{.}}]%
}{%
  \popQED\endtrivlist\@endpefalse
}
\makeatother

\newcommand{\supp}{\operatorname{supp}}

\newcommand{\U}{\mathbf{U}}
\makeatletter
\let\oldtocsection=\tocsection
\renewcommand{\tocsection}[3]{%
  \oldtocsection{#1}{#2}{\textbf{#3}}%
}
\let\oldtocsubsection=\tocsubsection
\renewcommand{\tocsubsection}[3]{%
  \oldtocsubsection{#1}{#2}{\hspace*{0.25em}#3}%
}
\makeatother
\author{Yoshiyuki Endo}
\address{Department of Mathematics, Nagoya University}
\email{endo.yoshiyuki.e2@s.mail.nagoya-u.ac.jp}
\title[Julia--Fatou Theory via RSCCs]
{A Julia--Fatou Theory via Random Systems with Complete Connections}

\subjclass[2020]{37F10, 37A50, 60J05}
\keywords{
random dynamical systems,
random systems with complete connections (RSCC),
Julia sets and Fatou sets,
kernel Julia sets,
Cooperation Principle
}

\begin{document}

\begin{abstract}
We develop a Julia--Fatou theory for random dynamical systems of continuous self-maps on a compact metric space, driven by random systems with complete connections (RSCCs).  This framework allows the selection rule to depend on the evolving state and, in general, on the entire past, going beyond the Markovian graph directed Markov system setting.  For each state we define Julia, Fatou, and kernel Julia sets via equicontinuity of admissible composition families, and we introduce a pathwise and skew product viewpoint.

Under natural compactness and continuity assumptions on the RSCC, we study the associated averaged dynamics on the product space and prove Cooperation Principle~I: if the kernel Julia set is empty at every state and the admissible maps are open, then the iterates of the adjoint transition operator are equicontinuous on the whole space of probability measures, and along almost every admissible path the fiberwise Julia set has zero mass for any given finite measure.  We further identify a new phenomenon specific to RSCCs, namely emptiness jumps of kernel Julia sets along admissible state trajectories, and provide criteria excluding such jumps, including discreteness of the state space and a propagation mechanism under $\varphi$-irreducibility.  Several examples, motivated by reinforcement and feedback mechanisms, illustrate both the jump phenomenon and the applicability of the Cooperation Principle~I in non-Markovian settings.
\end{abstract}

\maketitle
\tableofcontents

\section{Introduction}

\subsection{Background and Motivation}
Complex dynamics has its origins in the works of Julia and Fatou, where one studies the
iteration of a single holomorphic map and the dichotomy between stable and unstable
behaviour, encoded by the Fatou and Julia sets
\cite{JMPA_1918_8_1__47_0,BSMF_1919__47__161_0}; see, for instance,
\cite{MR1128089,MR1230383,MR2193309} for standard references.
A natural extension is to consider compositions of multiple maps.
This viewpoint was developed in the theory of rational semigroups initiated by
Hinkkanen and Martin \cite{MR1397693,MR1429333}.
Randomization adds another layer. At each step, the map is chosen according to a probability law.
Random complex dynamics was initiated by Forn{\ae}ss and Sibony \cite{MR1145616} and further
developed by Sumi and others; see, for instance,
\cite{MR2323605,MR2747724,MR3084426,MR3333714,MR3382590,MR3649239,MR4002398,MR4268827,MR4407234}.
Sumi introduced a family of cooperation principles in random complex dynamics.  In particular,
Cooperation Principle~I, established in \cite{MR2747724} for i.i.d.\ random dynamical systems,
asserts that if the kernel Julia set is empty, then the ``chaos'' of the averaged dynamics disappears.
It is worth noting that this is a randomness-induced phenomenon, which does not arise in deterministic dynamics.

\smallskip
A further step beyond the i.i.d.\ setting is to allow dependent choices of maps.  In \cite{MR4002398},
Sumi and Watanabe extended Cooperation Principle~I to non-i.i.d.\ holomorphic random dynamics
driven by a Markov chain.
Their framework is formulated via graph directed Markov systems (GDMS), introduced by
Mauldin--Urba\'nski, and it contains i.i.d.\ random dynamical systems as a special case.
The present paper pushes this circle of ideas further, replacing the Markovian mechanism of GDMS by a genuinely non-Markovian scheme with memory.

\smallskip
We work in the probabilistic framework of random systems with complete connections (RSCC), 
rooted in the notion of dependence with complete connections introduced by 
Onicescu and Mihoc in 1935 \cite{OnicescuMihoc1935}, 
further developed by Doeblin and Fortet \cite{MR1505076}, 
and systematically studied by Iosifescu and Grigorescu \cite{MR1070097}.

\smallskip
Random systems with complete connections (RSCC) provide a flexible
probabilistic framework for stochastic processes that are not necessarily
Markovian and may exhibit dependence on the entire past.
Owing to this generality, RSCC appear in a variety of mathematical contexts.
In fractal geometry and ergodic theory,
\cite{MR4396787} develops a unified framework encompassing
countable iterated function systems with overlaps,
Smale endomorphisms, and RSCC, establishing geometric
and dimensional properties of stationary measures.
Connections with statistical mechanics have also been explored:
\cite{MR2041831,MR2123648} reformulate RSCC in a structure parallel
to Gibbs theory, thereby clarifying their thermodynamic features.
In number theory, RSCC have been applied to continued fraction expansions
\cite{MR3523394,MR4075402}.
The digit sequence of a continued fraction is not a finite-order
Markov chain but can naturally be interpreted as a chain with complete
connections, fitting into the RSCC framework.
Furthermore, in time series analysis,
\cite{MR4140544} provides general conditions ensuring stationarity,
ergodicity, and mixing properties for infinite-memory processes.
The framework covers observation-driven models with exogenous covariates
arising in finance, economics, and climate data,
thus supplying theoretical foundations for models used in practice.
However, to the best of the author's knowledge,
RSCC have not been systematically employed as a foundation for
a Julia--Fatou theory in complex dynamics
or, more generally, for dynamical systems of continuous maps
on compact metric spaces.

\smallskip
This paper develops a Julia--Fatou theory for random dynamical systems of continuous self-maps on a
compact metric space, driven by RSCCs.
Since RSCCs are substantially more general than GDMS, we first need to specify carefully what
constitutes an admissible choice of maps and what the corresponding notions of stability and instability
should be.
Our strategy is to extend the GDMS viewpoint of \cite{MR4002398} while preserving the
equicontinuity characterization that underlies the classical Julia--Fatou dichotomy.

\subsection{Setting and Main Results}
We briefly describe the setting and main results. Precise definitions and standing
assumptions are given in the subsequent sections.

Throughout, let $Y$ be a nonempty compact metric space. Let $\mathrm{CM}(Y)$ denote
the space of continuous self-maps on $Y$ endowed with the compact--open topology.
Equivalently, this is the topology of uniform convergence on $Y$.
Our primary motivation comes from complex dynamics on the Riemann sphere
$\widehat{\mathbb C}$.
In that classical case one takes $Y=\widehat{\mathbb C}$.
Here we work on a general compact metric space $Y$, keeping only the topological
structure needed for a Julia--Fatou theory.

We work with a random system with complete connections, abbreviated as an RSCC.
It consists of a state space $(W,\mathcal W)$, an index space $(X,\mathcal X)$,
an update map $u:W\times X\to W$, and a transition probability function
$P:W\times\mathcal X\to[0,1]$; see Definition~\ref{def:RSCC}.
By Theorem~\ref{thm:RSCC-existence}, once an initial state $w\in W$ is fixed,
the RSCC generates a sequence of indices $(\xi_n)_{n\in\mathbb{N}}\subset X$ whose law may
depend on the entire past and is in general neither i.i.d.\ nor Markovian.

To each index $x\in X$ we associate a Borel probability measure
$\tau_x$ on $\mathrm{CM}(Y)$ and write $\Gamma_x:=\supp(\tau_x)$.
Given $y\in Y$, at step $n$ we choose the index $\xi_n$ according to the RSCC rule.
We then choose a map $\gamma_n\in\Gamma_{\xi_n}$ according to $\tau_{\xi_n}$ and
apply it to the current point in $Y$.
This procedure defines a random dynamical system on $Y$ with memory, denoted by
$S_\tau$; see Definition~\ref{def:RSCC-tau}.
When the discussion depends only on the supports $\Gamma_x$ and not on the measures
$\tau_x$ themselves, we write $S$ in place of $S_\tau$; see
Definition~\ref{def:basic-setup}.

For each state $w\in W$, we consider the family $H_w(S)$ of all admissible finite
compositions starting from $w$; see
Definition~\ref{def:admissible-composition-family}.
We define the Fatou and Julia sets $F_w(S)$ and $J_w(S)$ by equicontinuity of
$H_w(S)$; see Definition~\ref{def:Julia-RSCC}.
To capture persistent instability, we also introduce the kernel Julia set
$J_{\ker,w}(S)$, which consists of points that are mapped into Julia sets under
every admissible future evolution; see Definition~\ref{def:kernel-julia-set}.
These notions extend the ones studied in \cite{MR2747724,MR4002398}.
In particular, finite graph directed Markov systems appear as a special case; see Example~\ref{ex:rscc-gdms-mrds}.

In Section~\ref{sec:generalization-CP} we work under
Assumption~\ref{ass:standing-cooperation}.
The state space $W$ is a compact metric space, the index space $X$ is finite,
and the transition probabilities depend continuously on the state.

We introduce the product space $\mathbb{Y}:=Y\times W$ and study Markov operators
on $C(\mathbb{Y})$, the Banach space of complex-valued continuous functions on
$\mathbb{Y}$, together with their adjoint action on $\mathfrak{M}_1(\mathbb{Y})$.
Here $\mathfrak{M}_1(\mathbb{Y})$ denotes the space of Borel probability measures
on $\mathbb{Y}$ endowed with the weak-$*$ topology, which is compact and metrizable.
We then define the transition operator $M_\tau$ associated with $S_\tau$
as in Definition~\ref{def:transition-operator} and derive basic properties.

The Cooperation Principle~I is formulated in terms of three objects.
First, the adjoint operator $M_\tau^{*}$ acts on $\mathfrak{M}_1(\mathbb{Y})$, and
$F_{\mathrm{meas}}(M_\tau^{*})$ denotes the set of measures at which the iterates
$\{(M_\tau^{*})^{n}\}_{n\in \mathbb{N}}$ are equicontinuous on some neighborhood; see
Definition~\ref{def:Fatou-meas}.
Second, for each initial state $w\in W$, the RSCC together with
$\tau=\{\tau_x\}_{x\in X}$ induces a natural probability measure $\tilde{\tau}_w$
on the path space $\Xi_w(S_\tau)$; see Definition~\ref{def:Pwtilde}.
Third, for $\xi\in\Xi_w(S_\tau)$ we write $J_\xi$ for the Julia set along $\xi$;
see Definition~\ref{def:Julia-path}.
With this notation, we can state the Cooperation Principle~I in the RSCC setting.

\begin{mainresult}[Theorem~\ref{thm:CP-Fatou}, Cooperation Principle~I for RSCCs]
\label{mr:cooperation}
Let $S_\tau$ satisfy Assumption~\ref{ass:standing-cooperation}.
If $J_{\ker,w}(S_\tau)=\emptyset$ for all $w\in W$ and
$\Gamma_x\subset\mathrm{OCM}(Y)$ for all $x\in X$, then
\(
F_{\mathrm{meas}}(M_\tau^{*})=\mathfrak{M}_1(\mathbb{Y}).
\)
Moreover, for every finite Borel measure $\lambda$ on $Y$ and every $w\in W$,
we have $\lambda(J_\xi)=0$ for $\tilde{\tau}_w$-almost every $\xi\in\Xi_w(S_\tau)$.
\end{mainresult}

This shows that Cooperation Principle~I does not rely on a Markovian selection rule.
It persists in a genuinely non-Markovian framework with memory, where the law of the next index
may depend on the entire past through the evolving state.

In our approach, we use the assumption $J_{\ker,w}(S_\tau)=\emptyset$ for all $w\in W$
as a sufficient condition for Cooperation Principle~I.
When the state space $W$ has accumulation points, this condition can be delicate,
since the property $J_{\ker,w}(S_\tau)=\emptyset$ may break down along admissible limits of states.
This motivates the following jump phenomenon.

\begin{mainresult}[Emptiness jumps of kernel Julia sets]
\label{mr:jump}
Assume that $W$ is a metric space and that $\mathcal W=\mathcal B(W)$.
There exist RSCCs $S$ on $Y$ and an admissible sequence of states $(w_n)_{n\in\mathbb{N}}$
converging to a limit state $w_\infty\in W$ such that
\[
J_{\ker,w_n}(S)=\emptyset \quad \text{for all } n\in\mathbb{N},
\qquad\text{whereas}\qquad
J_{\ker,w_\infty}(S)\neq\emptyset.
\]
\end{mainresult}

This \emph{emptiness jump} phenomenon is introduced in
Definition~\ref{def:kernel-julia-jump} and illustrated by concrete examples in
Examples~\ref{ex:kernel-julia-jump} and~\ref{ex:linear-reinforcement-rscc}.
In particular, a jump shows that the sufficient condition used in
Theorem~\ref{thm:CP-Fatou} can break down when an admissible state trajectory approaches a limit.
We therefore present conditions under which such jumps cannot occur.

\begin{mainresult}[Proposition~\ref{prop:no-jump-discrete-W}, Theorem~\ref{thm:no-jump-phi-irreducible}, Criteria excluding emptiness jumps]
\label{mr:nojump}
The jump phenomenon in Definition~\ref{def:kernel-julia-jump} does not occur if one of the
following conditions holds:
\begin{enumerate}[label=\textup{(\roman*)}]
\item
The state space $W$ is discrete and $\mathcal W=\mathcal B(W)$.

\item
There exists a $\sigma$-finite measure $\varphi$ on $(W,\mathcal W)$ such that $S$ is
$\varphi$-irreducible in the sense of Definition~\ref{def:phi-irreducible-RSCC}, and
\[
\varphi\bigl(\{w\in W:\ J_{\ker,w}(S)=\emptyset\}\bigr)>0.
\]
\end{enumerate}
\end{mainresult}

The jump phenomenon is a genuinely new phenomenon in the RSCC framework.
It cannot occur in the classical GDMS setting, since a GDMS with discrete vertices
corresponds to an RSCC with a discrete state space, and hence condition \textup{(i)}
applies.

Condition \textup{(ii)} provides another mechanism excluding jumps.
If $S$ is $\varphi$-irreducible and kernel-emptiness holds on a set of positive
$\varphi$-measure, then kernel-emptiness holds at every state.
In particular, no jump can occur.

\subsection{Organization of the Paper}

Section~\ref{sec:formulation} develops the RSCC-based Julia--Fatou framework under the assumption that the index space is countable. We introduce statewise Julia, Fatou, and kernel Julia sets, establish their basic properties, and formulate pathwise and skew product viewpoints. We also study propagation of kernel-emptiness and define the emptiness jump phenomenon.

Section~\ref{sec:generalization-CP} is developed under stronger assumptions: the index space is finite, the state space is a compact metric space, and the transition probabilities depend continuously on the state. There we study the averaged dynamics via the transition operator and prove Cooperation Principle~I in the RSCC setting.

Section~\ref{sec:examples} presents examples illustrating both genuinely new RSCC phenomena, including emptiness jumps, and models motivated by reinforcement and feedback.

\medskip
To avoid confusion, we fix some basic notation used throughout the paper.
We use the standard notation
$\mathbb{N}=\{1,2,3,\dots\}$ and $\mathbb{N}_0=\{0,1,2,3,\dots\}$, as well as
the standard symbols $\mathbb Z$, $\mathbb R$, and $\mathbb C$.
We write $\widehat{\mathbb C}:=\mathbb C\cup\{\infty\}$ for the Riemann sphere.
For a set $A$, we denote by $\mathcal{P}(A)$ its power set.
For a topological space $A$, we denote by $\mathcal B(A)$ its Borel $\sigma$-algebra.

A set is called countable if it is finite or countably infinite; in
particular, every finite set is countable.

Whenever possible, we follow the notation and terminology of
\cite{MR2747724,MR4002398} in order to facilitate comparison with the existing literature.

\section{Formulation of the Julia--Fatou Theory for Random Systems with Complete Connections}
\label{sec:formulation}
\subsection{Julia Set, Fatou Set and Kernel Julia Set}
In this subsection, we introduce the notion of a random system with complete
connections (RSCC) on $Y$, and define the associated Julia set, Fatou set,
and kernel Julia set. We also establish their basic properties.

 \begin{definition}[{\cite[Definition~1.1.1]{MR1070097}}]\label{def:RSCC}
     A random system with complete connections (RSCC) is a quadruple $\{ (W, \mathcal{W}), (X,\mathcal{X}),u ,P \} $, where
     \begin{enumerate}[label=\textup{(\roman*)}]
         \item $(W, \mathcal{W})$ and $(X,\mathcal{X})$ are arbitrary measurable spaces;
         \item $u \colon W \times X \to W $ is a $(\mathcal{W} \otimes \mathcal{X}, \mathcal{W})$-measurable map;
         \item $P$ is a transition probability function from $(W,\mathcal{W})$ to $(X,\mathcal{X})$, that is, for each $w \in W$, the map $A \mapsto P(w,A)$ defines a probability 
measure on $(X,\mathcal{X})$, and for each $A \in \mathcal{X}$, 
the map $w \mapsto P(w,A)$ is $\mathcal{W}$-measurable.
     \end{enumerate}
 \end{definition}
 
In the above definition, we call $W$ the \emph{state space}, $X$ the \emph{index space}, and $u$ the \emph{update map}. 

We emphasize that, in general, no restrictions are imposed on the cardinality,
metric structure, or topological structure of the state space $W$ or the index
space $X$ in the definition of an RSCC.
Accordingly, no continuity assumptions are required for the update map $u$
or for the transition probability function $P$.
This observation highlights the fact that the class of random systems with complete
connections provides a very broad and flexible framework.
For concrete examples illustrating this generality, we refer the reader to
 \cite[Section~1.2]{MR1070097}.

\begin{notation}
We write $x^{(n)}=(x_{1},\ldots,x_{n})\in X^{n}$.
For $n\in\mathbb{N}$, define inductively the maps
\[
u^{(n)} \colon W \times X^{n} \to W
\]
by
\[
u^{(n)}(w,x^{(n)})
=
\begin{cases}
u(w,x_1), & \text{if } n=1,\\[6pt]
u\big(u^{(n-1)}(w,x^{(n-1)}),x_n\big),
& \text{if } n\ge2.
\end{cases}
\]
For simplicity, we write $u^{(n)}(w, x^{(n)})$ as $w x^{(n)}$ whenever no confusion arises.
For $r\in\mathbb{N}$, we define the $r$-step transition probability
$P_r$ from $(W,\mathcal W)$ to $(X^r,\mathcal X^r)$ by
\[
P_r(w,A)
=
\int_{X^r}
\mathbf{1}_A(x^{(r)})
\, P(w,dx_1)
P(w x_1,dx_2)
\cdots
P(w x^{(r-1)},dx_r),
\]
for $w\in W$ and $A\in\mathcal X^r$,
with the convention that $P_1(w,A)=P(w,A)$.

Moreover, for $n,r\in\mathbb{N}$, $w\in W$ and $A\in\mathcal X^r$, we define
\[
P_r^{\,n}(w,A)
:=
P_{n+r-1}\big(w, X^{n-1}\times A\big).
\]
\end{notation}

The following existence theorem provides a cornerstone of the framework
developed in this paper.

\begin{theorem}[{\cite[Theorem~1.1.2]{MR1070097}}]\label{thm:RSCC-existence}
Let $\{(W,\mathcal{W}), (X,\mathcal{X}), u, P\}$ be a RSCC, 
and fix an arbitrary state $w_{0}\in W$. 
Then there exists a unique probability measure $\mathbf{P}_{w_{0}}$ on 
$(X^{\mathbb{N}}, \mathcal{X}^{\mathbb{N}})$ 
and a sequence of $X$-valued random variables $(\xi_{n})_{n\in\mathbb{N}}$ defined on 
$(X^{\mathbb{N}}, \mathcal{X}^{\mathbb{N}}, \mathbf{P}_{w_{0}})$ such that, 
for all $m,n,r\in\mathbb{N}$ and $A\in\mathcal{X}^{r}$, the following hold:
\begin{enumerate}[label=\textup{(\roman*)}]
    \item $\mathbf{P}_{w_{0}}\big([\xi_{n},\ldots,\xi_{n+r-1}]\in A\big)=P_{r}^{n}(w_{0},A)$;
    \item $\mathbf{P}_{w_{0}}\big([\xi_{n+m},\ldots,\xi_{n+m+r-1}]\in A \mid \xi^{(n)}\big)
    =P_{r}^{m}(w_{0}\xi^{(n)},A)$, $\mathbf{P}_{w_{0}}$-a.s.;
    \item $\mathbf{P}_{w_{0}}\big([\xi_{n+m},\ldots,\xi_{n+m+r-1}]\in A \mid \xi^{(n)},\zeta^{(n)}\big)
    =P_{r}^{m}(\zeta_{n},A)$, $\mathbf{P}_{w_{0}}$-a.s.;
\end{enumerate}
where $\xi^{(n)}=(\xi_{1},\ldots,\xi_{n})$, $\zeta_{n}=w_{0}\xi^{(n)}$, and $\zeta^{(n)}=(\zeta_{1},\ldots,\zeta_{n})$.

Moreover, the sequence $(\zeta_{n})_{n\in\mathbb{N}}$ with $\zeta_{0}=w_{0}$ 
forms a $W$-valued homogeneous Markov chain whose transition operator
\begin{equation*}
    \U f(w) = \int_{X} f(wx)\,P(w,dx) , 
    \qquad f\in B_b(W,\mathcal W),
\end{equation*}
acts on the Banach space $B_b(W,\mathcal W)$ 
of all bounded $\mathcal{W}$-measurable complex-valued functions on $W$.
\end{theorem}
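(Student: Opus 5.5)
The plan is to build $\mathbf{P}_{w_0}$ via the Ionescu-Tulcea theorem and read off (i)--(iii) and the Markov property of $(\zeta_n)$ directly from the product form of the resulting finite-dimensional distributions.

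\textbf{Step 1: the kernels.} For each $n\ge1$ define the probability kernel $Q_n$ from $(X^{n-1},\mathcal X^{n-1})$ to $(X,\mathcal X)$ by
\[
Q_n(x^{(n-1)},A):=P\bigl(w_0x^{(n-1)},A\bigr),\qquad Q_1(\cdot,A)=P(w_0,A).
\]
This is a genuine kernel. By induction using measurability of $u$, the map $x^{(n-1)}\mapsto w_0x^{(n-1)}$ is $(\mathcal X^{n-1},\mathcal W)$-measurable. Composing with the $\mathcal W$-measurable map $w\mapsto P(w,A)$ then gives measurability in $x^{(n-1)}$.

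\textbf{Step 2: existence and uniqueness.} Ionescu-Tulcea yields a unique probability $\mathbf P_{w_0}$ on $(X^{\mathbb N},\mathcal X^{\mathbb N})$ whose $n$-dimensional marginals are the iterated integrals $P(w_0,dx_1)P(w_0x_1,dx_2)\cdots$. In other words, the law of $\xi^{(n)}$ is $P_n(w_0,\cdot)$, where $\xi_n$ denotes the coordinate projections. Uniqueness of the whole measure follows from the $\pi$-$\lambda$ theorem, since cylinder sets generate $\mathcal X^{\mathbb N}$. Statement (i) is then immediate: $[\xi_n,\dots,\xi_{n+r-1}]\in A$ is the cylinder $X^{n-1}\times A$, whose mass is $P_{n+r-1}(w_0,X^{n-1}\times A)=P^n_r(w_0,A)$.

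\textbf{Step 3: conditional statements (ii) and (iii).} Two ingredients are needed.

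\begin{enumerate}[label=\textup{(\alph*)}]
\item The cocycle identity $w_0(x^{(n)}y^{(k)})=(w_0x^{(n)})y^{(k)}$, which follows from the inductive definition of $u^{(k)}$.
\item The resulting factorization
\[
P_{n+m+r-1}\bigl(w_0,B\times X^{m-1}\times A\bigr)=\int_B P^m_r\bigl(w_0x^{(n)},A\bigr)\,P_n(w_0,dx^{(n)})
\]
for $B\in\mathcal X^n$. To prove it, split the iterated integral after the $n$-th variable and apply (a) to the remaining factors.
\end{enumerate}

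The integrand in (b) is $\sigma(\xi^{(n)})$-measurable by Step 1, so (b) is exactly the defining property of the conditional probability in (ii). For (iii), observe that $\zeta^{(n)}$ is a measurable function of $\xi^{(n)}$, so $\sigma(\xi^{(n)},\zeta^{(n)})=\sigma(\xi^{(n)})$. Since $\zeta_n=w_0\xi^{(n)}$, statement (iii) then coincides with (ii).

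\textbf{Step 4: the Markov chain.} Apply (ii) with $m=r=1$: the conditional law of $\xi_{n+1}$ given $\xi^{(n)}$ is $P(\zeta_n,\cdot)$. Because $\zeta_{n+1}=u(\zeta_n,\xi_{n+1})$, this gives
\[
\mathbf E\bigl[f(\zeta_{n+1})\mid \xi^{(n)}\bigr]=\int_X f(\zeta_n x)\,P(\zeta_n,dx)=\U f(\zeta_n)
\]
for every $f\in B_b(W,\mathcal W)$. We have $\sigma(\zeta_0,\dots,\zeta_n)\subset\sigma(\xi^{(n)})$, and $\U f(\zeta_n)$ is measurable with respect to the smaller $\sigma$-algebra. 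The tower property therefore gives the Markov property with the time-independent operator $\U$, which is homogeneous. The same argument applies at $n=0$ using $\zeta_0=w_0$.

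\textbf{Main obstacle.} The only step needing care is the factorization (b), in particular keeping the measurability bookkeeping straight for the composite maps $u^{(n)}$. Everything else reduces to Ionescu-Tulcea and standard conditional-expectation manipulations.
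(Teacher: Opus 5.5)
Your proposal is correct. The paper gives no proof of this statement; it is quoted from \cite[Theorem~1.1.2]{MR1070097}, so there is no in-paper argument to compare against. The Ionescu-Tulcea theorem is the standard and natural tool here, since $(X,\mathcal X)$ is an arbitrary measurable space and a Kolmogorov-type extension would need topological assumptions that the RSCC definition does not provide. Your derivations are sound: (i) from the cylinder $X^{n-1}\times A$, (ii) from the factorization (b), (iii) from $\sigma(\xi^{(n)},\zeta^{(n)})=\sigma(\xi^{(n)})$, and the homogeneous Markov property via the tower property.

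Two routine points should be made explicit in a write-up; neither is a gap. First, Step~1 only covers measurability of $P(\cdot,A)$. The integrand in (b) is $x^{(n)}\mapsto P^m_r(w_0x^{(n)},A)=P_{m+r-1}(w_0x^{(n)},X^{m-1}\times A)$, so you need $w\mapsto P_k(w,C)$ to be $\mathcal W$-measurable for every $k$ and every $C\in\mathcal X^k$. This follows by induction on $k$, using joint measurability of $(w,x)\mapsto P_{k-1}(u(w,x),C')$ on rectangles, plus a monotone class argument.

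Second, Step~4 passes from the conditional law of $\xi_{n+1}$ to $\mathbf E[f(u(\zeta_n,\xi_{n+1}))\mid\xi^{(n)}]$. This uses the usual freezing lemma for the $\sigma(\xi^{(n)})$-measurable argument $\zeta_n$: check it for $g=\mathbf 1_{C\times D}$ and extend by monotone class. Alternatively, read it off directly from the iterated-integral formula, exactly as in (b).
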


Next, we specify the standing assumption on the index space that will be
imposed throughout this paper.

\begin{definition}\label{def:CDIS}
An RSCC
$\{(W,\mathcal W),(X,\mathcal X),u,P\}$
is said to satisfy the \emph{countable discrete index space} condition
(or the \emph{CDIS condition}) if the following hold:
\begin{enumerate}[label=\textup{(\roman*)}]
\item
The index set $X$ is at most countable; that is, $X$ is either finite or
countably infinite, and it is endowed with the discrete topology.

\item
The $\sigma$--algebra $\mathcal X$ is given by the power set of $X$, namely,
\[
\mathcal X := \mathcal{P}(X).
\]
Equivalently, $(X,\mathcal X)$ is a countable discrete measurable space
equipped with its Borel $\sigma$--algebra.
\end{enumerate}
\end{definition}

\begin{assumption}\label{ass:CDIS}
Throughout this paper, we assume that the RSCC
$\{(W,\mathcal W),(X,\mathcal X),u,P\}$
satisfies the \emph{CDIS condition} in the sense of
Definition~\ref{def:CDIS}.
\end{assumption}

We introduce the notions of admissible index words and reachable states
associated with an RSCC.

\begin{definition}[Admissible words and reachable states]\label{def:admissible-words}
Let $\{(W,\mathcal{W}), (X,\mathcal{X}), u, P\}$ be an RSCC. Fix $w \in W$. 
\begin{enumerate}[label=\textup{(\roman*)}]
\item
The set of all \emph{admissible index words of length $n$ from the state $w$} is defined by
\[
X_{w,n}
:=
\bigl\{
  x^{(n)} \in X^{n} \ :\ 
  \mathbf{P}_{w}\big([x^{(n)}]\big) > 0
\bigr\}.
\]

\item
The set of all \emph{admissible index words of finite length from the state $w$} is defined by
\[
X_{w,*}
:=
\bigcup_{n \in \mathbb{N}} X_{w,n}.
\]

\item
The set of \emph{reachable states from $w$} is defined by
\[
\operatorname{Reach}(w)
:=
\bigl\{
  w x^{(n)} \in W \ : \ 
  x^{(n)}\in X_{w,*}
\bigr\}.
\]
\end{enumerate}
\end{definition}

\begin{notation}
Let $(Y,d_Y)$ be a nonempty compact metric space.  
We denote by $\mathrm{CM}(Y)$ the space of all continuous self-maps on $Y$, 
equipped with the compact--open topology, and by $\mathrm{OCM}(Y)$ the subspace
of open continuous self-maps endowed with the relative topology.

For a finite sequence $\gamma^{(n)} = (\gamma_{1},\ldots,\gamma_{n}) 
\in \mathrm{CM}(Y)^{n}$ and integers $1 \le M \le N \le n$, we define
\[
\gamma_{N,M} := \gamma_{N} \circ \cdots \circ \gamma_{M}.
\]

For an infinite sequence $\gamma = (\gamma_{k})_{k\in\mathbb{N}} 
\in \mathrm{CM}(Y)^{\mathbb{N}}$ and integers $1 \le M \le N$, we define
\[
\gamma_{N,M} := \gamma_{N} \circ \cdots \circ \gamma_{M}.
\]
\end{notation}

The following definition introduces the central class of dynamical systems
considered in this paper.

\begin{definition}\label{def:basic-setup}
Let $\{(W,\mathcal{W}), (X,\mathcal{X}), u, P\}$ be an RSCC and let
$\{\Gamma_x\}_{x\in X}$ be a family of nonempty subsets of $\mathrm{CM}(Y)$.
We call
\[
S := \{(W,\mathcal{W}), (X,\mathcal{X}), u, P, \{\Gamma_x\}_{x\in X}\}
\]
a \emph{random system of continuous maps acting on $Y$ with complete connections},
or simply an \emph{RSCC on $Y$}.
\end{definition}

\medskip

Throughout this section, we fix a nonempty compact metric space $(Y,d_Y)$ and an RSCC on $Y$  given by
\[
S := \{(W,\mathcal{W}), (X,\mathcal{X}), u, P, \{\Gamma_x\}_{x\in X}\}
\]
under Assumption~\ref{ass:CDIS}.
All statements below are understood with respect to this fixed RSCC,
unless explicitly stated otherwise.

\begin{definition}[Admissible composition families]
\label{def:admissible-composition-family}
For each $w \in W$, we define the set of
\emph{admissible finite compositions from $w$} by
\[
H_w(S)
:=
\left\{
\gamma_n \circ \cdots \circ \gamma_1 \in \mathrm{CM}(Y)
\; : \;
\substack{
n \in \mathbb{N},\\
x^{(n)}=(x_1,\ldots,x_n) \in X_{w,n},\\
\gamma_j \in \Gamma_{x_j}\ (j=1,\ldots,n)
}
\right\}.
\]
Moreover, for $v\in \operatorname{Reach}(w)$, we define
\[
H_w^v(S)
:=
\left\{
\gamma_n \circ \cdots \circ \gamma_1 \in \mathrm{CM}(Y)
\; : \;
\substack{
n \in \mathbb{N},\\
x^{(n)}=(x_1,\ldots,x_n) \in X_{w,n},\\
\gamma_j \in \Gamma_{x_j}\ (j=1,\ldots,n),\\
w x^{(n)} = v
}
\right\}.
\]
For $v\notin \operatorname{Reach}(w)$, we define $H_w^v(S):=\emptyset$.
\end{definition}

\begin{definition}[Julia and Fatou sets at a state]
\label{def:Julia-RSCC}
For each state $w \in W$, we define the \emph{Julia set at $w$} by
\[
J_{w}(S)
:= 
\bigl\{
  y \in Y \; :\;
  H_{w}(S) \text{ is not equicontinuous on any neighborhood of } y
\bigr\}.
\]
The corresponding \emph{Fatou set} is defined as the complement $F_{w}(S)=Y \setminus J_{w}(S)$.
\end{definition}

The following lemma follows immediately from the definition.

\begin{lemma}\label{lem:Fatou-Julia-open-compact}
For each $w \in W$, the Fatou set $F_w(S)$ is open in $Y$, 
and the Julia set $J_w(S)$ is compact in $Y$.
\end{lemma}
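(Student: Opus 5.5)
We show that $F_w(S)$ is open; compactness of $J_w(S)$ then follows because $J_w(S)=Y\setminus F_w(S)$ is a closed subset of the compact metric space $Y$.

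The key observation is that, by Definition~\ref{def:Julia-RSCC}, a point $y$ lies in $F_w(S)$ exactly when there is a neighborhood $U$ of $y$ on which the family $H_w(S)$ is equicontinuous. We may shrink $U$ and assume it is open. Equicontinuity on $U$ is meant in the usual sense: for every $z\in U$ and every $\varepsilon>0$ there is $\delta>0$ such that
\[
d_Y(h(z'),h(z))<\varepsilon \quad\text{for all } h\in H_w(S) \text{ and all } z'\in U \text{ with } d_Y(z',z)<\delta .
\]
Now take any $z\in U$. The same set $U$ is then a neighborhood of $z$ on which $H_w(S)$ is equicontinuous, so $z\notin J_w(S)$, that is, $z\in F_w(S)$. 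Hence $U\subset F_w(S)$, and $F_w(S)$ is open.

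The only point needing care is the convention for ``equicontinuous on a neighborhood''. If it means equicontinuity at each point of the neighborhood, with the restricted family considered as maps $U\to Y$, the argument above applies directly. If it means something stronger, such as uniform equicontinuity on the neighborhood, openness still holds: the property is defined by the existence of some neighborhood, so every point of that neighborhood inherits it. I would state this explicitly in the proof.

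Finally, the trivial cases cause no trouble. If $F_w(S)=Y$, then $J_w(S)=\emptyset$, which is compact. If $H_w(S)$ is empty (for instance, if no admissible words existed), the family is vacuously equicontinuous everywhere, so again $F_w(S)=Y$. Under Definition~\ref{def:RSCC} admissible words of every length exist, but the argument does not depend on this.
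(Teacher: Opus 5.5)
Your proof is correct and is the same argument the paper relies on when it says the lemma ``follows immediately from the definition'': an open neighborhood on which $H_w(S)$ is equicontinuous serves as a witness for each of its points, so $F_w(S)$ is open and $J_w(S)$ is a closed subset of the compact space $Y$. One small correction to a side remark: the existence of admissible words of every length uses the countability of $X$ from Assumption~\ref{ass:CDIS}, not Definition~\ref{def:RSCC} alone, but your argument does not depend on it.
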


Next, we define the notion of invariance in the setting of RSCCs.
Recall that we write $wx := u(w,x)\in W$ for $w \in W$ and $x \in X$.

\begin{definition}[Forward and backward invariance]
For each $w \in W$, let $E_w$ be a subset of $Y$.
\begin{enumerate}[label=\textup{(\roman*)}]
\item
The family $(E_w)_{w\in W}$ is said to be
\emph{forward $S$-invariant} if for every $w\in W$ and every $x\in X_{w,1}$,
\[
\bigcup_{f\in\Gamma_x} f(E_w) \subset E_{wx}.
\]

\item
The family $(E_w)_{w\in W}$ is said to be
\emph{backward $S$-invariant} if for every $w\in W$ and every $x\in X_{w,1}$,
\[
\bigcup_{f\in\Gamma_x} f^{-1}(E_{wx}) \subset E_w.
\]
\end{enumerate}
\end{definition}

By a standard argument based on the stability of equicontinuity under composition and the openness of the maps in $\Gamma_x$, we obtain the following invariance property.

\begin{lemma}[Invariance of Julia--Fatou sets]
\label{lem:Rscc-Fatou-Julia-invariance}
Assume that $\Gamma_x\subset \mathrm{OCM}(Y)$ for every $x\in X$.
Then the family of Fatou sets $(F_w(S))_{w\in W}$ is forward $S$-invariant.
Consequently, the family of Julia sets $(J_w(S))_{w\in W}$ is backward $S$-invariant.
\end{lemma}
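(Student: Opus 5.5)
Fix $w\in W$, $x\in X_{w,1}$ and $f\in\Gamma_x$. We will show $f(F_w(S))\subset F_{wx}(S)$. The backward invariance of the Julia sets then follows formally by taking complements. Indeed, if $y\in f^{-1}(J_{wx}(S))$ and $y\in F_w(S)$, then $f(y)\in F_{wx}(S)\cap J_{wx}(S)=\emptyset$, a contradiction.

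The first step is combinatorial: relate $H_{wx}(S)$ to $H_w(S)$. Let $x^{(n)}=(x_1,\dots,x_n)\in X_{wx,n}$. By the construction of $\mathbf P_w$ in Theorem~\ref{thm:RSCC-existence} (via $P_{n+1}$),
\[
\mathbf P_w([x,x_1,\dots,x_n])=P(w,\{x\})\,\mathbf P_{wx}([x_1,\dots,x_n]).
\]
Since $X$ is discrete, $x\in X_{w,1}$ means $P(w,\{x\})>0$, so the word $(x,x_1,\dots,x_n)$ lies in $X_{w,n+1}$. Hence for every $h\in H_{wx}(S)$ we have $h\circ f\in H_w(S)$. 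In other words,
\[
\{h\circ f: h\in H_{wx}(S)\}\subset H_w(S).
\]

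The second step is topological. Let $y\in F_w(S)$, and choose an open neighborhood $U$ of $y$ on which $H_w(S)$ is equicontinuous. Since $f$ is an open map, $V:=f(U)$ is an open neighborhood of $f(y)$. We want to show that $H_{wx}(S)$ is equicontinuous on $V$, which places $f(y)$ in $F_{wx}(S)$.

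The key point is that equicontinuity of $\{h\circ f\}$ on $U$ must be pushed forward through $f$. For this we use compactness. Shrink $U$ to a compact neighborhood $K\subset U$ of $y$ (possible since $Y$ is a metric space), and replace $V$ by an open set $f(\operatorname{int}K)$ containing $f(y)$. Equicontinuity on $K$ becomes uniform equicontinuity there: for every $\varepsilon>0$ there is $\delta>0$ such that for $a,b\in K$ with $d(a,b)<\delta$ and all $h\in H_{wx}(S)$, we have $d(h f(a),h f(b))<\varepsilon$.

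Now we argue by contradiction. Suppose equicontinuity fails at some $z\in f(\operatorname{int}K)$. Then there exist $z_k\to z$ and $h_k$ with $d(h_k(z_k),h_k(z))\ge\varepsilon$. Write $z=f(a)$ with $a\in\operatorname{int}K$. Openness of $f$ gives points $a_k\to a$ with $f(a_k)=z_k$: since $f$ maps every small ball around $a$ onto a neighborhood of $z$, eventually $z_k$ lies in $f(B(a,r))$ for each fixed $r$, and a diagonal choice yields $a_k$. Then
\[
d\bigl(h_k f(a_k),h_k f(a)\bigr)\ge\varepsilon,
\]
with $a_k\to a$, which contradicts equicontinuity of $\{h\circ f\}$ at $a$.

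The main care point is this last step, lifting convergent sequences through the open map $f$. Compactness of $K$ is not strictly needed, only pointwise equicontinuity at $a$ together with openness of $f$ at $a$; the compactness is just a convenience. Everything else is bookkeeping with the admissible word sets.
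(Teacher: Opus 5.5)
Your proof is correct and is essentially the standard argument the paper invokes without writing out. The factorization $\mathbf P_w([x,x_1,\dots,x_n])=P(w,\{x\})\,\mathbf P_{wx}([x_1,\dots,x_n])$ gives $\{h\circ f: h\in H_{wx}(S)\}\subset H_w(S)$, and openness of $f$ lets you lift $z_k\to f(a)$ to $a_k\to a$, which pushes equicontinuity from $U$ to the open set $f(U)$. The detour through a compact neighborhood $K$ can simply be dropped; if kept, its existence comes from compactness of $Y$, not from $Y$ merely being metric.
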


We next introduce the kernel Julia set for RSCCs as a natural generalization
of \cite[Definition~2.7]{MR2747724} and \cite[Definition~2.17]{MR4002398}.

\begin{definition}[Kernel Julia set at a state]
\label{def:kernel-julia-set}
For each $w\in W$, we define the \emph{kernel Julia set at $w$} by
\[
J_{\ker,w}(S)
:=
\bigcap_{v\in \operatorname{Reach}(w)}
\ \bigcap_{\gamma\in H_w^v(S)}
\gamma^{-1}\!\bigl(J_v(S)\bigr).
\]
\end{definition}

\begin{lemma}[Forward invariance of kernel Julia sets]
\label{lem:kernel-Julia-forward-invariance}
The family $(J_{\ker,w}(S))_{w\in W}$ is forward $S$-invariant.
\end{lemma}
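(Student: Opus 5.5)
Fix $w\in W$, $x\in X_{w,1}$, $f\in\Gamma_x$ and $y\in J_{\ker,w}(S)$. We must show $f(y)\in J_{\ker,wx}(S)$. Write $w':=wx$. By Definition~\ref{def:kernel-julia-set}, we must check that for every $v\in\operatorname{Reach}(w')$ and every $\gamma\in H_{w'}^{v}(S)$ we have $\gamma(f(y))\in J_v(S)$. The plan is to absorb the extra first step $f$ into an admissible composition from $w$: we show $\gamma\circ f\in H_w^v(S)$. The defining property of $y$ then gives $(\gamma\circ f)(y)\in J_v(S)$, which is exactly what is needed.

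The key step is a concatenation property of admissible words. Suppose $\gamma=\gamma_n\circ\cdots\circ\gamma_1$ with $x^{(n)}=(x_1,\ldots,x_n)\in X_{w',n}$, $\gamma_j\in\Gamma_{x_j}$, and $w'x^{(n)}=v$. Consider the word $(x,x_1,\ldots,x_n)\in X^{n+1}$. By the recursive definition of $u^{(n)}$,
\[
u^{(n+1)}\bigl(w,(x,x^{(n)})\bigr)=u^{(n)}(wx,x^{(n)})=v.
\]
For admissibility, we use Theorem~\ref{thm:RSCC-existence}(i) with $n=1$ and $r=n+1$. Under the CDIS condition (Assumption~\ref{ass:CDIS}), cylinders are singletons in $X^{n+1}$, so
\[
\mathbf P_w\bigl([x,x_1,\ldots,x_n]\bigr)=P(w,\{x\})\,P(wx,\{x_1\})\cdots P(w'x^{(n-1)},\{x_n\})=P(w,\{x\})\,\mathbf P_{w'}\bigl([x^{(n)}]\bigr).
\]
The same formula, applied at $w$ with $n=1$, shows $\mathbf P_w([x])=P(w,\{x\})$, so $x\in X_{w,1}$ gives $P(w,\{x\})>0$. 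Both factors are therefore positive, and $(x,x^{(n)})\in X_{w,n+1}$. Together with $f\in\Gamma_x$, this yields $\gamma\circ f\in H_w^v(S)$, and in particular $v\in\operatorname{Reach}(w)$.

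Finally, the conclusion follows: $y\in J_{\ker,w}(S)\subset(\gamma\circ f)^{-1}(J_v(S))$, so $\gamma(f(y))\in J_v(S)$. Since $v$ and $\gamma$ were arbitrary, $f(y)\in J_{\ker,w'}(S)$. This gives $\bigcup_{f\in\Gamma_x}f(J_{\ker,w}(S))\subset J_{\ker,wx}(S)$.

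The only step needing care is the product formula for $\mathbf P_w$ of a cylinder and the factorization splitting off the first letter. This is a direct unwinding of the definition of $P_r$ in the discrete setting, so I expect no real obstacle. No openness assumption is needed, since only preimages of Julia sets are involved.
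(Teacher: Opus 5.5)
Your proposal is correct and follows essentially the same route as the paper: show that $\gamma\circ f\in H_w^v(S)$ for every $\gamma\in H_{wx}^v(S)$, then apply the defining property of $y\in J_{\ker,w}(S)$. The paper simply asserts this membership. Your factorization $\mathbf P_w([x,x_1,\ldots,x_n])=P(w,\{x\})\,\mathbf P_{wx}([x_1,\ldots,x_n])$, obtained from Theorem~\ref{thm:RSCC-existence}(i) under the CDIS condition, correctly supplies the admissibility check that the paper leaves implicit.
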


\begin{proof}
Fix $w\in W$, $x\in X_{w,1}$, and $f\in\Gamma_x$, and let $z\in J_{\ker,w}(S)$.
Set $y:=f(z)$. We show that $y\in J_{\ker,wx}(S)$.

Let $u\in W$ and $\gamma\in H_{wx}^u(S)$ be arbitrary.
Then $\gamma\circ f\in H_w^u(S)$.
Since $z\in J_{\ker,w}(S)$, we have
\[
(\gamma\circ f)(z)\in J_u(S),
\]
and hence
\[
\gamma(y)=\gamma(f(z))\in J_u(S).
\]

As $u$ and $\gamma$ were arbitrary, it follows that $y\in J_{\ker,wx}(S)$.
Therefore,
\[
f(J_{\ker,w}(S))\subset J_{\ker,wx}(S).
\]

Since $w\in W$, $x\in X_{w,1}$, and $f\in\Gamma_x$ were arbitrary, we obtain
\[
\bigcup_{f\in\Gamma_x} f(J_{\ker,w}(S)) \subset J_{\ker,wx}(S)
\qquad
\text{for all } w\in W \text{ and } x\in X_{w,1}.
\]
This is exactly the forward $S$-invariance of $(J_{\ker,w}(S))_{w\in W}$.
\end{proof}

At the end of this subsection, we verify that the RSCC framework introduced above
indeed contains, as special cases, the settings considered in
\cite{MR2747724} and \cite{MR4002398}.
In particular, the notions of admissible compositions, Julia sets, and kernel
Julia sets formulated in this section genuinely extend those introduced in the
context of graph directed Markov systems (GDMS) and Markov random dynamical
systems (MRDS).

Since the model studied in \cite{MR2747724} corresponds to a degenerate case of
\cite{MR4002398} in which both the vertex set and the edge set consist of a
single element, it suffices to show that the GDMS and MRDS framework of
\cite{MR4002398} can be realized as a special case of an RSCC.
We therefore begin by briefly recalling the setting of \cite{MR4002398}.

Let $m\in\mathbb{N}$ and let $(\tau_{ij})_{i,j=1}^{m}$ be a family of Borel measures
on $\mathrm{CM}(Y)$ satisfying
\[
\sum_{j=1}^{m} \tau_{ij}(\mathrm{CM}(Y)) = 1,
\qquad i=1,\ldots,m.
\]
In \cite{MR4002398}, this data induces a Markov chain on
$Y\times\{1,\ldots,m\}$ whose transition probability from
$(y,i)\in Y\times\{1,\ldots,m\}$ to a set $B\times\{j\}\subset Y\times\{1,\ldots,m\}$
is given by
\[
\tau_{ij}\bigl(\{f\in \mathrm{CM}(Y): f(y)\in B\}\bigr).
\]
The vertex set is defined by $V:=\{1,\ldots,m\}$ and the edge set by
\[
E:=\{(i,j)\in V\times V:\ \tau_{ij}(\mathrm{CM}(Y))>0\}.
\]
For each edge $e=(i,j)\in E$, one sets
\(
\Gamma_{e}:=\supp(\tau_{ij}).
\)
The triple $(V,E,\{\Gamma_{e}\}_{e\in E})$ then defines a GDMS in the sense of
\cite{MR4002398}, and the above Markov chain on $Y\times V$ is the associated
MRDS.

\begin{example}[GDMS and MRDS as a special case of an RSCC]
\label{ex:rscc-gdms-mrds}
We show that the GDMS and MRDS structure recalled above can be realized as a
special case of an RSCC.

Define
\[
W:=V,\qquad \mathcal{W}:=\mathcal{P}(W),
\qquad
X:=E,\qquad \mathcal{X}:=\mathcal{P}(X).
\]
The update map $u:W\times X\to W$ is defined by
\[
u(w,x):=
\begin{cases}
t(x) & \text{if } i(x)=w,\\
w & \text{otherwise},
\end{cases}
\]
where $i(x)$ and $t(x)$ denote the initial and terminal vertices of the edge
$x\in E$.
The transition probability function $P:W\times X\to[0,1]$ is given by
\[
P(w,x):=
\begin{cases}
\tau_{ij}(\mathrm{CM}(Y))
& \text{if } w=i=i(x)\text{ and } j=t(x),\\
0 & \text{otherwise}.
\end{cases}
\]

With these choices, the RSCC
\(
\{(W,\mathcal{W}),(X,\mathcal{X}),u,P\}
\)
generates the same admissible index sequences as the Markov chain underlying
the MRDS of \cite{MR4002398}, and the associated family
$\{\Gamma_x\}_{x\in X}$ coincides with the collection of edge maps of the GDMS.

In particular, for each vertex $i\in V$ identified with a state $w\in W$, the
family of admissible compositions $H_i(S)$ defined in
\cite[Definition~2.3]{MR4002398} coincides with the family $H_w(S)$ introduced in
Definition~\ref{def:admissible-composition-family}.
Consequently, the Julia set $J_i(S)$ in
\cite[Definition~2.4]{MR4002398} agrees with the Julia set $J_w(S)$ defined in
Definition~\ref{def:Julia-RSCC} under this identification.
\end{example}

Conversely, it is also clear from the above example that any RSCC with finite
state space and finite index set reduces to the GDMS and MRDS framework
considered in \cite{MR4002398}.

\subsection{Propagation of Kernel Julia Set Emptiness and the Jump Phenomenon}
\label{sec:propagation}
In this subsection, we study the behavior of the kernel Julia sets $J_{\ker,w}(S_\tau)$ as the state $w\in W$ varies.
In particular, we investigate how the emptiness of $J_{\ker,w}(S_\tau)$
propagates when the state is changed.

We first introduce and formalize the \emph{jump phenomenon}.
This phenomenon is a genuinely new feature of the present framework and does
not occur in the setting of graph directed Markov systems with finitely many
edges and finitely many vertices studied in the previous literature
\cite{MR4002398}.

\begin{definition}[Emptiness jump of kernel Julia sets]
\label{def:kernel-julia-jump}
In addition to Assumption~\ref{ass:CDIS}, suppose that the state space $W$ is a
metric space and that the $\sigma$-algebra $\mathcal W$ coincides with its
Borel $\sigma$-algebra $\mathcal B(W)$.
Let $d_W$ denote the metric on $W$.
We say that the RSCC $S$ on $Y$ \emph{exhibits an emptiness jump in kernel Julia
sets}, or simply a \emph{jump}, if there exist a state $w_0\in W$, an admissible
sequence $(x_n)_{n\in\mathbb{N}}\in \operatorname{supp}(\mathbf P_{w_0})$, and a
state $w_\infty\in W$ such that the associated state sequence
\[
w_n := w_0 x^{(n)}, \qquad n\in\mathbb{N},
\]
satisfies the following properties:
\begin{enumerate}[label=\textup{(\roman*)}, leftmargin=2.2em]
\item
$d_W(w_n,w_\infty)\to 0$ as $n\to\infty$;
\item
$J_{\ker,w_n}(S)=\emptyset$ for all $n\in\mathbb{N}$;
\item
$J_{\ker,w_\infty}(S)\neq\emptyset$.
\end{enumerate}
\end{definition}

\begin{example}
\label{ex:kernel-julia-jump}
We construct a polynomial RSCC on the Riemann sphere
which exhibits an emptiness jump of kernel Julia sets.

\medskip
\noindent
\textbf{Step 1.} Construction of the RSCC.

Let $Y=\widehat{\mathbb C}$ be equipped with the spherical metric.
Define the state space
\[
W:=\{0\}\cup\{1/n:n\in\mathbb{N}\}\cup\{2\}\subset\mathbb R
\]
endowed with the Euclidean metric.
Then $(W,d_W)$ is a compact metric space and $\mathcal W=\mathcal B(W)$.
Let
\[
X:=\{x_1,x_2\}, \qquad \mathcal X=\mathcal{P}(X).
\]

Define the update map $u:W\times X\to W$ by
\[
u(1/n,x_1)=1/(n+1),\qquad
u(1/n,x_2)=2,
\]
\[
u(0,x_1)=0,\qquad u(0,x_2)=2,
\]
\[
u(2,x_1)=2,\qquad u(2,x_2)=2.
\]

The transition probabilities are specified by
\[
P(1/n,\{x_1\})=1-2^{-\frac1n},\qquad
P(1/n,\{x_2\})=2^{-\frac1n}.
\]
\[
P(0,\{x_1\})=1,\quad P(0,\{x_2\})=0,
\]
\[
P(2,\{x_1\})=0,\quad P(2,\{x_2\})=1.
\]

Finally, define the polynomial maps
\[
f(z)=z^2,\qquad \Gamma_{x_1}=\{f\},
\]
\[
g(z)=\frac{z^2}{2},\qquad \Gamma_{x_2}=\{f,g\}.
\]

We denote the resulting RSCC by
\[
S=\{(W,\mathcal W),(X,\mathcal X),u,P,\{\Gamma_x\}_{x\in X}\}.
\]

\medskip
\noindent
\textbf{Step 2.} Structure of the Julia set at the state $2$.

At the state $2$, one has $P(2,\{x_2\})=1$,
and hence the admissible family coincides with the polynomial semigroup
\[
H_2(S)=\langle f,g\rangle .
\]

\medskip
\noindent
\emph{Claim A.
\(
J_2(S)
=
\{\,z\in\mathbb C:1\le |z|\le 2\,\}.
\)
}

\smallskip
\noindent
\emph{Proof.}
At the state $2$, the admissible family is the semigroup
$H_2(S)=\langle z^2, z^2/2\rangle$.
This semigroup coincides with that considered in
\cite[Example~1]{MR1397693}.
It is shown therein that its Julia set is precisely the closed annulus
\(
\{\,z\in\mathbb C:1\le |z|\le 2\,\}.
\)

\medskip
\noindent
\emph{Claim B.
\(
J_{\ker,2}(S)=\emptyset.
\)
}

\smallskip
\noindent
\emph{Proof.}
We compute
\[
f^{-1}(J_2(S))
=
\{\,z\in\mathbb C:1\le |z|\le \sqrt2\},
\qquad
g^{-1}(J_2(S))
=
\{\,z\in\mathbb C:\sqrt2\le |z|\le 2\}.
\]
Hence
\[
f^{-1}(J_2(S))
\cap
g^{-1}(J_2(S))
=
\{\,z\in\mathbb C:|z|=\sqrt2\}.
\]
Furthermore,
\[
(f^2)^{-1}(J_2(S))
=
\{\,z\in\mathbb C:1\le |z|\le 2^{1/4}\}.
\]
Since $2^{1/4}<\sqrt2$, it follows that
\[
\bigcap_{h\in H_2(S)} h^{-1}(J_2(S))
=
\emptyset.
\]
Therefore $J_{\ker,2}(S)=\emptyset$.

\medskip
\noindent
\textbf{Step 3.} Vanishing of the kernel Julia sets along $(w_n)_{n\in \mathbb{N}}$.

Let $w_0=1$ and consider the admissible index sequence
$(x_n)=(x_1,x_1,\dots)$.
Then
\[
w_n=\frac{1}{n+1}\to 0.
\]

\medskip
\noindent
\emph{Claim C.
For every $n\in\mathbb{N}$, one has
\(
J_{\ker,w_n}(S)=\emptyset.
\)
}

\smallskip
\noindent
\emph{Proof.}
Since
\[
P\!\left(\frac{1}{n+1},\{x_2\}\right)>0,
\]
the state $2$ is reachable from $w_n$.
By definition of the kernel Julia sets,
\[
J_{\ker,w_n}(S)
=
\bigcap_{v\in\operatorname{Reach}(w_n)}
\ \bigcap_{\gamma\in H_{w_n}^{v}(S)}
\gamma^{-1}(J_v(S)).
\]
As $2\in\operatorname{Reach}(w_n)$ and
$J_{\ker,2}(S)=\emptyset$,
the intersection must be empty.

\medskip
\noindent
\textbf{Step 4.} Non-vanishing at the limit state.

At the limit state $w_\infty=0$, the index $x_2$ is no longer admissible.
The system reduces to deterministic iteration of $f(z)=z^2$.
Consequently,
\[
J_{\ker,0}(S)
=
J(f)
=
\{\,z\in \mathbb C:|z|=1\,\}
\neq\emptyset.
\]

\medskip
\noindent
We conclude that $S$ exhibits an emptiness jump in kernel Julia sets.
\end{example}

The jump phenomenon constitutes an obstruction to the applicability of the
\emph{Cooperation Principle} (Theorem~\ref{thm:CP-Fatou}), which will be
introduced later.
Indeed, the Cooperation Principle~I can be applied only under the assumption
that the kernel Julia set is empty for every state.
In the presence of a jump, however, there necessarily exists a state for which
the kernel Julia set is nonempty, and consequently the standing assumptions of
Theorem~\ref{thm:CP-Fatou} fail to hold.

In what follows, we present several sufficient conditions that rule out the
occurrence of jumps.

\begin{proposition}[No jump for discrete state spaces]
\label{prop:no-jump-discrete-W}
Let $S$ be an RSCC on $Y$.
Assume that the state space $W$ is endowed with the discrete metric $d_W$ and
that $\mathcal W$ coincides with the Borel $\sigma$-algebra $\mathcal B(W)$.
Then $S$ does not exhibit an emptiness jump in kernel Julia sets in the sense
of Definition~\ref{def:kernel-julia-jump}.
\end{proposition}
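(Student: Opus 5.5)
The plan is a proof by contradiction that uses only condition (i)--(iii) of Definition~\ref{def:kernel-julia-jump} together with the special topology of the discrete metric. Suppose $S$ exhibits a jump. Then there are a state $w_0\in W$, an admissible sequence $(x_n)_{n\in\mathbb{N}}\in\operatorname{supp}(\mathbf P_{w_0})$, and a limit state $w_\infty\in W$ such that $w_n:=w_0x^{(n)}$ satisfies (i)--(iii). The idea is that in a discrete metric space the only convergent sequences are the eventually constant ones, so $w_\infty$ must itself equal some $w_n$. Condition (ii) then forces $J_{\ker,w_\infty}(S)=\emptyset$, which contradicts (iii).

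Concretely, take $d_W$ to be the discrete metric, with $d_W(v,v')=1$ whenever $v\neq v'$. By (i) we have $d_W(w_n,w_\infty)\to0$, so there is $N\in\mathbb{N}$ with $d_W(w_n,w_\infty)<1$ for all $n\ge N$. This gives $w_n=w_\infty$ for all $n\ge N$, and in particular $w_N=w_\infty$. By (ii), $J_{\ker,w_N}(S)=\emptyset$. Since the kernel Julia set depends only on the state (Definition~\ref{def:kernel-julia-set}), it follows that $J_{\ker,w_\infty}(S)=J_{\ker,w_N}(S)=\emptyset$, contradicting (iii). Hence no jump occurs.

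There is no real obstacle here. The only point to check is the interpretation of ``discrete metric''. If it is read more generally as any metric inducing the discrete topology, the threshold $1$ is not available, but the same argument works: the singleton $\{w_\infty\}$ is open, so it contains a ball $B(w_\infty,\varepsilon)$, and convergence in (i) forces $w_n\in\{w_\infty\}$ for all large $n$. In either reading the measurability hypothesis $\mathcal W=\mathcal B(W)$ is not needed beyond making Definition~\ref{def:kernel-julia-jump} applicable, and the admissibility of $(x_n)$ plays no role.
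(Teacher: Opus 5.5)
Your proposal is correct and follows the same route as the paper's proof: assume a jump, use the discrete metric to get $d_W(w_n,w_\infty)<1$ and hence $w_n=w_\infty$ for all large $n$, and then derive $J_{\ker,w_\infty}(S)=\emptyset$ from condition (ii), which contradicts (iii). Your extra remark that the argument also works for any metric inducing the discrete topology is a harmless and valid generalization.
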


\begin{proof}
Assume, to the contrary, that $S$ exhibits a jump.
Then there exist a state $w_0\in W$, an admissible sequence
$(x_n)_{n\in\mathbb{N}}\in \operatorname{supp}(\mathbf P_{w_0})$, and a state
$w_\infty\in W$ such that the associated state sequence
\[
w_n := w_0 x^{(n)}, \qquad n\in\mathbb{N},
\]
satisfies conditions \textup{(i)}--\textup{(iii)} of
Definition~\ref{def:kernel-julia-jump}.
Since $d_W$ is the discrete metric, the convergence $d_W(w_n,w_\infty)\to 0$
implies the existence of an integer $N\in\mathbb{N}$ such that
$w_n=w_\infty$ for all $n\ge N$.
Indeed, the inequality $d_W(w_n,w_\infty)<1$ forces $w_n=w_\infty$.
Consequently,
\[
J_{\ker,w_\infty}(S)=J_{\ker,w_n}(S)=\emptyset
\quad\text{for all } n\ge N,
\]
which contradicts condition \textup{(iii)} of
Definition~\ref{def:kernel-julia-jump}.
This contradiction completes the proof.
\end{proof}

\begin{remark}
Proposition~\ref{prop:no-jump-discrete-W} shows that no jump phenomenon can
occur in the framework of graph directed Markov systems with finitely many
vertices and edges considered in \cite{MR4002398}.
More generally, even when the number of vertices is allowed to be infinite,
the jump phenomenon is still ruled out as long as the state space is endowed
with the discrete topology.
In particular, the jump phenomenon can arise only when the state space
possesses accumulation points.
\end{remark}

We next introduce the notion of irreducibility.

\begin{definition}[{\cite[Definition~3.3.15]{MR1070097}}]
\label{def:phi-irreducible-RSCC}
Let $\varphi$ be a
$\sigma$-finite measure on $(W,\mathcal W)$.
For each $w\in W$, let $\mathbf P_w$ be the probability measure on
$(X^{\mathbb{N}},\mathcal X^{\mathbb{N}})$ associated with the initial state $w$
as in Theorem~\ref{thm:RSCC-existence}, and let
$(\zeta_n)_{n\in\mathbb{N}_0}$ be the corresponding $W$-valued Markov chain,
defined by
\[
\zeta_0:=w,
\qquad
\zeta_n:=w\xi^{(n)} \quad (n\in\mathbb{N}).
\]
For $A\in\mathcal W$ and $w\in W$, we define
\[
L(w,A)
:=
\mathbf P_w\Bigl(
\bigcup_{n\in\mathbb{N}}
\{\xi\in X^{\mathbb{N}} : \zeta_n\in A\}
\Bigr).
\]
The RSCC is said to be \emph{$\varphi$-irreducible} if
\[
L(w,A)>0
\quad
\text{for all } w\in W \text{ and all } A\in\mathcal W \text{ with } \varphi(A)>0.
\]
\end{definition}

\begin{remark}
\label{rem:irreducibility-and-stationary-distributions}
If the state space $W$ is at most countable, then $\varphi$-irreducibility reduces
to the usual notion of irreducibility for Markov chains by taking $\varphi$ to be
the counting measure on $W$.
\end{remark}

As a generalization of Lemma~2.15 in \cite{MR4002398}, the following result describes
the propagation of emptiness.

\begin{lemma}[Propagation of kernel-emptiness under $\varphi$-irreducibility]
\label{lemma:phi-irreducible-propagation-kernel-empty}
Let $\varphi$ be a $\sigma$-finite measure on $(W,\mathcal W)$ and assume that
$S$ is $\varphi$-irreducible.
Define
\[
A
:=
\{\, w\in W : J_{\ker,w}(S)=\emptyset \,\},
\]
and assume that $A\in\mathcal W$ and $\varphi(A)>0$.
Then $J_{\ker,w}(S)=\emptyset$ for all $w\in W$.
\end{lemma}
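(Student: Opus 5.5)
Fix $w\in W$. Since $\varphi(A)>0$, $\varphi$-irreducibility gives $L(w,A)>0$. The event $\bigcup_{n\in\mathbb N}\{\zeta_n\in A\}$ is a countable union of events, so there is some $n\in\mathbb N$ with $\mathbf P_w(\zeta_n\in A)>0$. Since $X$ is countable and $\zeta_n=w\xi^{(n)}$ depends only on the first $n$ coordinates, we can write
\[
\mathbf P_w(\zeta_n\in A)=\sum_{x^{(n)}\in X^n,\ wx^{(n)}\in A}\mathbf P_w\bigl([x^{(n)}]\bigr).
\]
At least one term in this sum is positive. Hence there is an admissible word $x^{(n)}\in X_{w,n}$ with $v:=wx^{(n)}\in A$. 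In particular $v\in\operatorname{Reach}(w)$ and $J_{\ker,v}(S)=\emptyset$.

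It remains to transfer emptiness from the reachable state $v$ back to $w$. This is the step that needs care, and it is also exactly the mechanism used in Claim~C of Example~\ref{ex:kernel-julia-jump}. Suppose $z\in J_{\ker,w}(S)$, and choose $\gamma_j\in\Gamma_{x_j}$ for $j=1,\dots,n$; this is possible because each $\Gamma_x$ is nonempty. Put $h:=\gamma_n\circ\cdots\circ\gamma_1$. Lemma~\ref{lem:kernel-Julia-forward-invariance} is stated one step at a time for $x\in X_{w,1}$, so we apply it inductively along the word. Admissibility of $x^{(n)}$ from $w$ gives, for each $k<n$,
\[
x_{k+1}\in X_{wx^{(k)},1},
\]
because $\mathbf P_w([x^{(k+1)}])=\mathbf P_w([x^{(k)}])\,P(wx^{(k)},\{x_{k+1}\})>0$, which follows from Theorem~\ref{thm:RSCC-existence}(i)--(ii). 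The induction therefore yields $h(z)\in J_{\ker,v}(S)=\emptyset$, a contradiction. Hence $J_{\ker,w}(S)=\emptyset$.

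One could instead argue directly from Definition~\ref{def:kernel-julia-set}, avoiding the lemma. If $z\in J_{\ker,w}(S)$, take any $u\in\operatorname{Reach}(v)$ and $\gamma'\in H_v^u(S)$. Then $\gamma'\circ h\in H_w^u(S)$, because the concatenated word is admissible from $w$; this again uses the product formula for $\mathbf P_w$ of cylinders. It follows that $\gamma'(h(z))\in J_u(S)$ for all such $u$ and $\gamma'$, so $h(z)\in J_{\ker,v}(S)$, a contradiction. Either way, the key technical point is the concatenation property of admissible words, which follows from the cylinder formula in Theorem~\ref{thm:RSCC-existence}. Since $w$ was arbitrary, the lemma follows.
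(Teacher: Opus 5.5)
Your proposal is correct and follows the paper's proof. You use $\varphi$-irreducibility to obtain an admissible word $x^{(n)}\in X_{w,n}$ with $v=wx^{(n)}\in A$, and then iterate the forward invariance of $(J_{\ker,w}(S))_{w\in W}$ along that word to get $(\gamma_n\circ\cdots\circ\gamma_1)(J_{\ker,w}(S))\subset J_{\ker,v}(S)=\emptyset$. Your additional details are correct and are left implicit in the paper: extracting a single $n$ and a positive-probability cylinder from $L(w,A)>0$, and checking $x_{k+1}\in X_{wx^{(k)},1}$ via the cylinder product formula so that the one-step lemma can be iterated.
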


\begin{proof}
Let $w\in W$.
Since $S$ is $\varphi$-irreducible,
Definition~\ref{def:phi-irreducible-RSCC} implies that
$L(w,A)>0$.
Hence, there exist $n\in\mathbb{N}$ and a word $x^{(n)}\in X_{w,n}$ such that
\[
v:=wx^{(n)}\in A,
\]
and therefore $J_{\ker,v}(S)=\emptyset$.

Choose maps
$\gamma_j\in\Gamma_{x_j}$ for $1\le j\le n$.
Set $w_0:=w$ and $w_k:=wx^{(k)}$ for $k=1,\ldots,n$.
By repeated application of Lemma~\ref{lem:kernel-Julia-forward-invariance}, we obtain
\[
(\gamma_n\circ\cdots\circ\gamma_1)\bigl(J_{\ker,w}(S)\bigr)
\subset J_{\ker,wx^{(n)}}(S)
=
J_{\ker,v}(S)=\emptyset,
\]
which forces $J_{\ker,w}(S)=\emptyset$.
Since $w\in W$ was arbitrary, the conclusion follows.
\end{proof}

\begin{theorem}[Absence of jumps under $\varphi$-irreducibility]
\label{thm:no-jump-phi-irreducible}
In addition to Assumption~\ref{ass:CDIS}, suppose that the state space $W$ is a
metric space and that the $\sigma$-algebra $\mathcal W$ coincides with its
Borel $\sigma$-algebra $\mathcal B(W)$.
Let $\varphi$ be a $\sigma$-finite measure on $(W,\mathcal W)$ and assume that
$S$ is $\varphi$-irreducible.
Define
\[
A
:=
\{\, w\in W : J_{\ker,w}(S)=\emptyset \,\}.
\]
Assume that $A\in\mathcal W$ and that $\varphi(A)>0$.
Then $S$ does not exhibit an emptiness jump in kernel Julia sets in the sense of
Definition~\ref{def:kernel-julia-jump}.
\end{theorem}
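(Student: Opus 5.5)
The plan is to argue by contradiction and reduce directly to Lemma~\ref{lemma:phi-irreducible-propagation-kernel-empty}. Assume that $S$ exhibits a jump. Then Definition~\ref{def:kernel-julia-jump} supplies a state $w_0\in W$, an admissible sequence $(x_n)_{n\in\mathbb{N}}\in\operatorname{supp}(\mathbf P_{w_0})$, and a limit state $w_\infty\in W$ satisfying conditions \textup{(i)}--\textup{(iii)}. Condition \textup{(iii)} says in particular that $J_{\ker,w_\infty}(S)\neq\emptyset$. It is the only one of the three conditions we shall need.

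Next we invoke Lemma~\ref{lemma:phi-irreducible-propagation-kernel-empty}. Its hypotheses are exactly those of the present theorem: $\varphi$ is $\sigma$-finite, $S$ is $\varphi$-irreducible, and the set $A=\{w\in W: J_{\ker,w}(S)=\emptyset\}$ is $\mathcal W$-measurable with $\varphi(A)>0$. The lemma therefore gives $J_{\ker,w}(S)=\emptyset$ for every $w\in W$. Taking $w=w_\infty$ yields $J_{\ker,w_\infty}(S)=\emptyset$, which contradicts condition \textup{(iii)}. Hence no jump can occur.

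We do not expect a real obstacle, since all the substantive work (forward invariance of kernel Julia sets, Lemma~\ref{lem:kernel-Julia-forward-invariance}, combined with reachability of $A$ from every state) is already contained in the propagation lemma. One point should still be stated explicitly in the write-up. The metric structure on $W$, the convergence $w_n\to w_\infty$, and the emptiness along $(w_n)$ play no role: the irreducibility hypothesis already forces kernel-emptiness at every state, including any possible limit state. The only place where the metric and Borel assumptions enter is in making the notion of a jump well defined, together with the standing requirement $A\in\mathcal W=\mathcal B(W)$.
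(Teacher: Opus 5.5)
Your proposal is correct and matches the paper's proof: assume a jump, apply Lemma~\ref{lemma:phi-irreducible-propagation-kernel-empty} to obtain $J_{\ker,w}(S)=\emptyset$ for all $w\in W$, and contradict condition \textup{(iii)} at $w_\infty$. The paper also notes that $w_n\in A$, but as you correctly observe, only condition \textup{(iii)} is actually used.
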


\begin{proof}
Suppose, for the sake of contradiction, that $S$ exhibits an emptiness jump in
kernel Julia sets.
Then there exist a state $w_0\in W$, an admissible index sequence
$(x_n)_{n\in\mathbb{N}}\in \operatorname{supp}(\mathbf P_{w_0})$, and a state
$w_\infty\in W$ such that, writing $w_n:=w_0x^{(n)}$, we have
\[
d_W(w_n,w_\infty)\to 0 \quad \text{as } n\to\infty,
\]
\[
J_{\ker,w_n}(S)=\emptyset \quad \text{for all } n\in\mathbb{N},
\qquad
J_{\ker,w_\infty}(S)\neq\emptyset.
\]
In particular, $w_n\in A$ for all $n\in\mathbb{N}$, and hence $A$ is nonempty.
By assumption, we moreover have $\varphi(A)>0$.

By Lemma~\ref{lemma:phi-irreducible-propagation-kernel-empty}, the
$\varphi$-irreducibility of $S$ together with the condition $\varphi(A)>0$
implies that
\[
J_{\ker,w}(S)=\emptyset
\qquad\text{for all } w\in W.
\]
This contradicts the fact that $J_{\ker,w_\infty}(S)\neq\emptyset$.
Therefore, $S$ cannot exhibit an emptiness jump in kernel Julia sets.
\end{proof}

\subsection{Admissible Infinite Sequences and Skew Product Dynamics}

In this subsection, we study the dynamics of RSCCs along admissible infinite
sequences.
We first introduce the space of admissible infinite paths starting from a
given state and define the associated Julia and Fatou sets along each path.
We then formulate the dynamics in terms of a skew product map and investigate
the corresponding skew product Julia sets and their invariance properties.

\begin{definition}[Admissible infinite sequences]
\label{def:Xi-w}
For each $w\in W$, we define the set of all
\emph{admissible infinite sequences from the state $w$} by
\[
\Xi_{w}(S)
:=
\Bigl\{
  \xi = (\gamma_{n}, x_{n})_{n\in\mathbb{N}}
  \in (\mathrm{CM}(Y)\times X)^{\mathbb{N}}
  \ : \ 
  (x_{n})_{n\in\mathbb{N}} \in \supp(\mathbf{P}_{w}),\ 
  \gamma_{n} \in \Gamma_{x_{n}}
  \text{ for all } n\in\mathbb{N}
\Bigr\}.
\]
\end{definition}

\begin{definition}[Julia and Fatou sets along a path]\label{def:Julia-path}
For each infinite sequence $\xi=(\gamma_{n}, x_{n})_{n\in\mathbb{N}}\in(\mathrm{CM}(Y)\times X)^{\mathbb{N}}$, we define the \emph{Julia set along $\xi$} by
\[
J_{\xi}
:= 
\Bigl\{
y \in Y \,: \,
\{\gamma_{n,1}\}_{n\in\mathbb{N}}
\text{ is not equicontinuous on any neighborhood of } y
\Bigr\}.
\]
The corresponding Fatou set is defined by $F_{\xi} := Y \setminus J_{\xi}$.
We then set
\[
J^{\xi} := \{\xi\}\times J_{\xi}\subset(\mathrm{CM}(Y)\times X)^{\mathbb{N}}\times Y,
\qquad
F^{\xi} := \{\xi\}\times F_{\xi}\subset(\mathrm{CM}(Y)\times X)^{\mathbb{N}}\times Y.
\]
\end{definition}

Using a standard reformulation of equicontinuity, an argument analogous to
that of \cite[Lemma~2.27]{MR4002398} yields the following assertions.

\begin{lemma}[Basic relations between $J_{\xi}$ and $J_w$]
\label{lem:Rscc-basic-Jxi-properties}
Let $w\in W$. Then we have the following.
\begin{enumerate}[label=\textup{(\roman*)}]
\item
For each $\xi\in\Xi_{w}(S)$, we have $J_{\xi}\subset J_{w}(S)$.

\item
We have the inclusion
\[
\bigcup_{\xi\in\Xi_{w}(S)} F^{\xi}
\subset
\Bigl\{
((\gamma_{n},x_{n})_{n\in\mathbb{N}}, y)\in(\mathrm{CM}(Y)\times X)^{\mathbb{N}}\times Y \ : \
\lim_{\varepsilon\to 0}
\sup_{n\in\mathbb{N}}
\operatorname{diam}\bigl(\gamma_{n,1}(B(y,\varepsilon))\bigr)=0
\Bigr\},
\]
where $\operatorname{diam}(A):=\sup_{a_1,a_2\in A} d_Y(a_1,a_2)$ and $B(y,\varepsilon):=\{y_0\in Y : d_Y(y,y_0)<\varepsilon\}$.

\item
Let $\xi=(\gamma_{n},x_{n})_{n\in\mathbb{N}}\in\Xi_{w}(S)$. Put $w_0:=w$ and $w_n:=w x^{(n)}$ for each $n\in\mathbb{N}$. We then have
\[
J_{\xi}
\subset
\bigcap_{n\in\mathbb{N}}
\gamma_{n,1}^{-1}\bigl(J_{w_{n+1}}(S)\bigr).
\]
\end{enumerate}
\end{lemma}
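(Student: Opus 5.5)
The plan is to prove all three assertions directly from the definitions. The only probabilistic input is the link between $\supp(\mathbf P_w)$ and admissibility of finite words. I would establish that link first. Under Assumption~\ref{ass:CDIS}, $X$ is discrete, so for $(x_n)_{n\in\mathbb N}\in X^{\mathbb N}$ each cylinder $[x^{(n)}]$ is an open neighbourhood of $(x_n)$ in the product topology. Hence $(x_n)\in\supp(\mathbf P_w)$ gives $\mathbf P_w([x^{(n)}])>0$, that is, $x^{(n)}\in X_{w,n}$ for every $n$.

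I also need a shifted version. For $n<m$, Theorem~\ref{thm:RSCC-existence}\,(ii) and the explicit product formula for $P_r$ give
\[
\mathbf P_w([x^{(m)}])=\mathbf P_w([x^{(n)}])\cdot P_{m-n}\bigl(wx^{(n)},\{(x_{n+1},\dots,x_m)\}\bigr).
\]
Since the left side is positive, so is the second factor. Therefore $(x_{n+1},\dots,x_m)\in X_{wx^{(n)},\,m-n}$, and consequently $\gamma_{m,n+1}\in H_{wx^{(n)}}(S)$ for all $m>n$.

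\emph{Part (i).} By the first observation, $\gamma_{n,1}\in H_w(S)$ for every $n$, so $\{\gamma_{n,1}\}_{n\in\mathbb N}\subset H_w(S)$. A subfamily of a family that is equicontinuous on a neighbourhood $U$ of $y$ is equicontinuous on $U$. Hence $F_w(S)\subset F_\xi$, which is $J_\xi\subset J_w(S)$.

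\emph{Part (ii).} Let $y\in F_\xi$ and let $U$ be a neighbourhood of $y$ on which $\{\gamma_{n,1}\}$ is equicontinuous. Given $\eta>0$, equicontinuity at $y$ yields $\delta>0$ such that $B(y,\delta)\subset U$ and $d_Y(\gamma_{n,1}(z),\gamma_{n,1}(y))<\eta/2$ for all $z\in B(y,\delta)$ and all $n$. Then $\sup_n\operatorname{diam}\gamma_{n,1}(B(y,\varepsilon))\le\eta$ for every $\varepsilon<\delta$, so the limit is $0$. Only equicontinuity at the point $y$ is used here, so compactness of $Y$ is not needed.

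\emph{Part (iii).} I argue by contraposition. Fix $n$ and suppose $\gamma_{n,1}(y)\in F_v(S)$, where $v$ is the state from which the tail $(x_{n+1},x_{n+2},\dots)$ is drawn. By the shifted observation, the tail family $\{\gamma_{m,n+1}\}_{m>n}$ lies in $H_v(S)$. It is therefore equicontinuous on some neighbourhood $V$ of $\gamma_{n,1}(y)$. Set $U:=\gamma_{n,1}^{-1}(V)$, a neighbourhood of $y$ by continuity. On $U$ the family $\{\gamma_{m,n+1}\circ\gamma_{n,1}\}_{m>n}$ is equicontinuous, being a fixed continuous map followed by a family equicontinuous on $V$. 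The finitely many maps $\gamma_{1,1},\dots,\gamma_{n,1}$ are continuous, and adding finitely many continuous maps preserves equicontinuity. Hence $\{\gamma_{m,1}\}_{m\in\mathbb N}$ is equicontinuous on $U$, so $y\in F_\xi$. This proves $J_\xi\subset\gamma_{n,1}^{-1}(J_v(S))$ for each $n$, and intersecting over $n$ gives the claim.

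The main obstacle is the index bookkeeping in (iii). After applying $\gamma_{n,1}$, which uses $x_1,\dots,x_n$, the natural state is $wx^{(n)}$, whose admissible future begins with $x_{n+1}$. I would therefore verify carefully which state the statement's label $w_{n+1}$ refers to, and align $v$ with it by shifting $n$ by one if necessary; the argument is identical either way. The second point to check is the conditional-positivity step, namely that $\mathbf P_w([x^{(m)}])>0$ forces the tail word to be admissible from $wx^{(n)}$. This follows from the multiplicative structure of $P_r$ in the Notation preceding Theorem~\ref{thm:RSCC-existence}.
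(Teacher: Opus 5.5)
Your proposal is correct and uses essentially the same standard argument the paper relies on; the paper gives no proof of its own and only points to \cite[Lemma~2.27]{MR4002398}. Openness of cylinders gives admissibility of every prefix, the product formula for $P_r$ gives admissibility of tail words from $wx^{(n)}$, and (i)--(iii) then reduce to elementary facts about equicontinuity.

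In (iii) you should commit to your choice instead of hedging. The state you single out, $v=wx^{(n)}=w_n$, is the right one. It is also exactly the form the paper uses later in the proof of Theorem~\ref{thm:CP-Fatou}, where it writes $J_\xi\subset\bigcap_{n}\gamma_{n,1}^{-1}(J_{wx^{(n)}}(S_\tau))$. The index $w_{n+1}$ in the statement is an off-by-one slip. The literal inclusion $J_\xi\subset\gamma_{n,1}^{-1}(J_{w_{n+1}}(S))$ is false in general, so your claim that the argument is ``identical either way'' does not hold. Shifting $n$ only yields $\gamma_{n+1,1}^{-1}(J_{w_{n+1}}(S))$, which is a reindexing of what you proved.

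For a counterexample, take $W=\{a,b\}$ and $X=\{p,q\}$, with $u(a,p)=b$, $u(b,q)=a$ (other values arbitrary) and $P(a,\{p\})=P(b,\{q\})=1$. On $\widehat{\mathbb C}$ set $\Gamma_p=\{z\mapsto 2z\}$ and $\Gamma_q=\{z\mapsto z^2\}$. From $a$ the only admissible path is $(p,q,p,q,\dots)$, so $\{\gamma_{n,1}\}_{n\in\mathbb N}=H_a(S)$. Hence $J_\xi=J_a(S)=\{|z|=1/4\}$, the Julia set of $4z^2$. However $w_2=a$ and $\gamma_{1,1}^{-1}(J_{w_2}(S))=\{|z|=1/8\}$, so the literal inclusion fails at $n=1$. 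Your inclusion with $w_1=b$ gives $\gamma_{1,1}^{-1}(J_b(S))=\{|z|=1/4\}$, as it should.
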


\begin{definition}[Skew product associated with an admissible sequence]
Let $\sigma : (\mathrm{CM}(Y)\times X)^{\mathbb{N}} \to (\mathrm{CM}(Y)\times X)^{\mathbb{N}}$ be the left shift map. We define the \emph{skew product map}
$\tilde{f} : (\mathrm{CM}(Y)\times X)^{\mathbb{N}} \times Y \to (\mathrm{CM}(Y)\times X)^{\mathbb{N}} \times Y$
by
\[
\tilde{f}\bigl((\gamma_n ,x_{n})_{n\in\mathbb{N}},\, y\bigr)
:=
\bigl(\sigma((\gamma_n,x_{n})_{n\in\mathbb{N}}),\ \gamma_{1}(y)\bigr).
\]
\end{definition}

For the skew product map defined above, the following result holds as an
analogue of \cite[Lemma~2.30]{MR4002398}.

\begin{lemma}\label{lem:skew-product-continuity}
Endow $(\mathrm{CM}(Y)\times X)^{\mathbb{N}}\times Y$ with the product topology.
Then the skew product map
\(
\tilde{f}:(\mathrm{CM}(Y)\times X)^{\mathbb{N}}\times Y
\longrightarrow
(\mathrm{CM}(Y)\times X)^{\mathbb{N}}\times Y
\)
is continuous.
Moreover, for each
$\xi=(\gamma_{n},x_{n})_{n\in\mathbb{N}}\in(\mathrm{CM}(Y)\times X)^{\mathbb{N}}$
we have
\(
J_{\xi}\subset \gamma_{1}^{-1}\bigl(J_{\sigma(\xi)}\bigr),
\)
and if $\gamma_{1}\in\mathrm{OCM}(Y)$, then
\(
J_{\xi}=\gamma_{1}^{-1}\bigl(J_{\sigma(\xi)}\bigr).
\)
\end{lemma}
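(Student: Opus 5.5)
The lemma has three claims: continuity of $\tilde f$, the inclusion $J_{\xi}\subset \gamma_{1}^{-1}(J_{\sigma(\xi)})$, and equality when $\gamma_1$ is open. I take them in that order, working on Fatou sets and passing to complements.

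\textbf{Continuity.} The shift $\sigma$ is continuous for the product topology, since each coordinate of $\sigma(\xi)$ is a coordinate of $\xi$. The second component is the composite
\[
(\xi,y)\mapsto(\gamma_1,y)\mapsto\gamma_1(y).
\]
The first arrow is a coordinate projection. The second is the evaluation map $\mathrm{CM}(Y)\times Y\to Y$, which is continuous because $Y$ is compact and the compact--open topology is the topology of uniform convergence: if $\gamma\to\gamma_0$ uniformly and $y\to y_0$, then
\[
d_Y(\gamma(y),\gamma_0(y_0))\le \sup_{Y} d_Y(\gamma,\gamma_0)+d_Y(\gamma_0(y),\gamma_0(y_0))\to0 .
\]
Hence $\tilde f$ is continuous.

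\textbf{The inclusion.} Write $\sigma(\xi)=(\gamma_{n+1},x_{n+1})_{n}$, so its compositions are $\gamma_{n+1,2}$ and $\gamma_{n,1}=\gamma_{n,2}\circ\gamma_1$ for $n\ge2$. It suffices to show that if $\gamma_1(y)\in F_{\sigma(\xi)}$ then $y\in F_\xi$. Choose a neighbourhood $V$ of $\gamma_1(y)$ on which $\{\gamma_{n,2}\}_{n\ge2}$ is equicontinuous, and put $U:=\gamma_1^{-1}(V)$, a neighbourhood of $y$. On $U$:
\begin{itemize}
\item $\{\gamma_{n,2}\circ\gamma_1\}_{n\ge2}$ is equicontinuous, because $\gamma_1$ is uniformly continuous on compact $Y$ and we compose it with an equicontinuous family;
\item adding the single continuous map $\gamma_{1,1}=\gamma_1$ preserves equicontinuity.
\end{itemize}
So $\{\gamma_{n,1}\}_{n\in\mathbb N}$ is equicontinuous on $U$, i.e.\ $y\in F_\xi$. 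Since $J_\xi$ and $J_{\sigma(\xi)}$ are the complements of these Fatou sets, this gives $J_\xi\subset\gamma_1^{-1}(J_{\sigma(\xi)})$.

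\textbf{Equality for open $\gamma_1$ (main obstacle).} We must show that $y\in F_\xi$ implies $\gamma_1(y)\in F_{\sigma(\xi)}$, i.e.\ transfer equicontinuity of $\gamma_{n,2}\circ\gamma_1$ on a neighbourhood $U$ of $y$ to equicontinuity of $\gamma_{n,2}$ near $\gamma_1(y)$. Openness makes $\gamma_1(U)$ a neighbourhood of $\gamma_1(y)$, but $\gamma_1$ need not be injective, so I will use a compactness argument instead of an inverse map. I will use the diameter formulation of equicontinuity, as in Lemma~\ref{lem:Rscc-basic-Jxi-properties}(ii), and argue by contradiction.
\begin{enumerate}
\item Suppose $\{\gamma_{n,2}\}$ is not equicontinuous at $\gamma_1(y)$. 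Then there are $\epsilon>0$, indices $n_k$, and points $a_k,b_k\to\gamma_1(y)$ with $d_Y(\gamma_{n_k,2}(a_k),\gamma_{n_k,2}(b_k))\ge\epsilon$.
\item Shrink $U$ to a closed ball $\overline{B}\subset U$ around $y$ with $\overline{B}\subset U$; then $\gamma_1(B)$ is still a neighbourhood of $\gamma_1(y)$. For large $k$ we have $a_k,b_k\in\gamma_1(B)$, so we may pick preimages $a_k',b_k'\in\overline{B}$.
\item The point is to arrange $d_Y(a_k',b_k')\to0$, which would contradict equicontinuity of $\gamma_{n,1}$ on $\overline{B}$. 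Preimages of nearby points need not be nearby, so I first try to choose preimages converging to a common point $y'$. For this I use openness at every point of $\gamma_1^{-1}(\gamma_1(y))\cap\overline{B}$: pass to a subsequence with $a_k'\to y'$, where $\gamma_1(y')=\gamma_1(y)$.
\item Since $\gamma_1$ is open, $\gamma_1(B(y',\delta))$ is a neighbourhood of $\gamma_1(y)$ for each $\delta>0$, so for large $k$ one can also choose $b_k'\in B(y',\delta)$. A diagonal choice of $\delta_k\to0$ makes $b_k'\to y'$, hence $d_Y(a_k',b_k')\to0$ while the images stay $\epsilon$ apart.
\end{enumerate}
This contradicts equicontinuity on $U$, provided $y'$ lies in the interior of $U$, which is why $\overline{B}\subset U$ was taken. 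Finally, the single map $\gamma_1$ is irrelevant to equicontinuity, so this yields equality.
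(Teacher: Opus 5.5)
Your proposal is correct and has the same three-step structure as the paper: continuity via the evaluation map, the inclusion by pulling a neighbourhood of equicontinuity back through $\gamma_1$, and the open case via $\gamma_1(U)$ being a neighbourhood of $\gamma_1(y)$. You also carry out the transfer of equicontinuity that the paper only delegates to ``the same argument as in Lemma~\ref{lem:Rscc-Fatou-Julia-invariance}''.

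Your compactness and diagonal argument is valid, but it can be shortened. Fix the preimage $y$ of $p=\gamma_1(y)$ and use openness at $y$ to lift each $q$ near $p$ to some $q'$ near $y$, so that $d_Y(\gamma_{n,2}(q),\gamma_{n,2}(p))=d_Y(\gamma_{n,1}(q'),\gamma_{n,1}(y))$. In either version, your contradiction argument only gives equicontinuity at $\gamma_1(y)$. You need to run it at every point of $\gamma_1(U)$, with $U$ open, to get equicontinuity on a neighbourhood of $\gamma_1(y)$, which is what the definition of the Fatou set requires.
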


\begin{proof}
\emph{Claim 1. The skew product map $\tilde f$ is continuous.}

Since $Y$ is a compact metric space and $X$ is at most countable,
the space $(\mathrm{CM}(Y)\times X)^{\mathbb{N}}\times Y$ is metrizable.
Hence it suffices to verify continuity using convergent sequences.

Let
\(
\pi_1:(\mathrm{CM}(Y)\times X)^{\mathbb{N}}\longrightarrow \mathrm{CM}(Y)
\)
denote the projection onto the first $\mathrm{CM}(Y)$--coordinate.
Consider the map
\[
(\mathrm{CM}(Y)\times X)^{\mathbb{N}}\times Y
\longrightarrow Y,
\qquad
(\xi,y)\longmapsto \pi_1(\xi)(y).
\]

Let
\(
\bigl((\gamma_n^{(k)},x_n^{(k)})_{n\in\mathbb{N}},\, y^{(k)}\bigr)_{k\in\mathbb{N}}
\)
be a sequence converging to
$\bigl((\gamma_n,x_n)_{n\in\mathbb{N}},\, y\bigr)$.
By definition of the product topology,
\[
\gamma_1^{(k)} \longrightarrow \gamma_1
\quad\text{in the compact--open topology on }\mathrm{CM}(Y),
\qquad
y^{(k)} \longrightarrow y \quad\text{in } Y.
\]
Since $Y$ is compact, convergence in the compact--open topology implies
uniform convergence on $Y$, and hence
\(
\gamma_1^{(k)}\bigl(y^{(k)}\bigr)\longrightarrow \gamma_1(y).
\)
Therefore the map $(\xi,y)\mapsto \pi_1(\xi)(y)$ is continuous.
Combining this with the continuity of $\sigma$, we conclude that
$\tilde f$
is continuous.

\medskip
\emph{Claim 2. For every $\xi$ we have
$J_{\xi}\subset \gamma_{1}^{-1}\bigl(J_{\sigma(\xi)}\bigr)$.}

Let $y\in J_{\xi}$ and put $y_{1}:=\gamma_{1}(y)$.
If $y_{1}\in F_{\sigma(\xi)}$, then there exists a neighborhood $V$ of $y_{1}$
on which the family $\{\gamma_{n+1,2}\}_{n\in\mathbb{N}}$ is equicontinuous.
By continuity of $\gamma_{1}$, there exists a neighborhood $U$ of $y$
such that $\gamma_{1}(U)\subset V$.
Hence
\[
\{\gamma_{n+1,1}\}_{n\in\mathbb{N}}
=
\{\gamma_{n+1,2}\circ\gamma_{1}\}_{n\in\mathbb{N}}
\]
is equicontinuous on $U$, which implies equicontinuity of
$\{\gamma_{n,1}\}_{n\in\mathbb{N}}$ near $y$, contradicting $y\in J_{\xi}$.
Therefore $y_{1}\in J_{\sigma(\xi)}$.

\medskip
\emph{Claim 3. If $\gamma_{1}\in\mathrm{OCM}(Y)$, then
$\gamma_{1}^{-1}\bigl(J_{\sigma(\xi)}\bigr)\subset J_{\xi}$.}

Let $y\in Y$ and assume $y\in F_{\xi}$.
Then there exists a neighborhood $U$ of $y$ on which
$\{\gamma_{n,1}\}_{n\in\mathbb{N}}$ is equicontinuous.
Since $\gamma_{n+1,1}=\gamma_{n+1,2}\circ\gamma_{1}$,
the family
$\{\gamma_{n+1,2}\circ\gamma_{1}\}_{n\in\mathbb{N}}$
is equicontinuous on $U$.
Because $\gamma_{1}$ is open, $\gamma_{1}(U)$ is a neighborhood of $\gamma_{1}(y)$.
By the same argument as in Lemma~\ref{lem:Rscc-Fatou-Julia-invariance},
this implies $\gamma_{1}(y)\in F_{\sigma(\xi)}$.
Hence
$\gamma_{1}^{-1}\bigl(J_{\sigma(\xi)}\bigr)\subset J_{\xi}$.
\end{proof}

Following the standard skew product formulation of non-autonomous and random
dynamical systems
\cite{MR1704543,MR1724402,MR1767945,MR2747724,MR4002398},
we introduce the following.

\begin{definition}[Skew product Julia set]
\label{def:skew-product-Julia}
For each $w\in W$, we define
\[
\tilde{J}_{w}(\tilde{f})
:=
\overline{\bigcup_{\xi\in\Xi_{w}(S)} J^{\xi}},
\]
where the closure is taken with respect to the product topology. We call $\tilde{J}_{w}(\tilde{f})$ the \emph{skew product Julia set} of $\tilde{f}$ at $w$.
\end{definition}

For the skew product Julia set defined above, the following result holds as an
analogue of \cite[Lemma~2.31]{MR4002398}.

\begin{lemma}[Invariance of skew product Julia sets]
\label{lem:skew-product-Julia-invariance}
For each $w\in W$, we have
\[
\tilde{J}_{w}(\tilde{f}) \subset \tilde{f}^{-1}\!\bigl(\tilde{J}_{w}(\tilde{f})\bigr).
\]

Let $w\in W$ and let $(x_n)_{n\in\mathbb{N}}\in\supp(\mathbf{P}_w)$. Put $w_0:=w$ and $w_n:=w x^{(n)}$ for each $n\in\mathbb{N}$. Assume that $\Gamma_{x_n}\subset\mathrm{OCM}(Y)$ for all $n\in\mathbb{N}$. Then $\tilde{f}$ is open, and for all $n\in\mathbb{N}_{0}$ we have
\[
\tilde{J}_{w_n}(\tilde{f}) = \tilde{f}^{-1}\!\bigl(\tilde{J}_{w_n}(\tilde{f})\bigr).
\]
\end{lemma}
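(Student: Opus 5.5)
The plan is to follow the argument of \cite[Lemma~2.31]{MR4002398}, transported to the product space $(\mathrm{CM}(Y)\times X)^{\mathbb{N}}\times Y$ and using Lemma~\ref{lem:skew-product-continuity} as the pointwise input.

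\emph{First inclusion.} Let $(\xi,y)\in J^{\xi}$ with $\xi\in\Xi_w(S)$. By Lemma~\ref{lem:skew-product-continuity}, $\gamma_1(y)\in J_{\sigma(\xi)}$, so $\tilde f(\xi,y)\in J^{\sigma(\xi)}$. We must place this inside $\tilde J_w(\tilde f)$, which requires relating $\sigma(\xi)$ to $\Xi_w(S)$.

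\begin{itemize}
\item The shifted sequence $\sigma(\xi)$ is admissible from $wx_1$, not necessarily from $w$.
\item I would verify directly from the definition of $\Xi_w(S)$ (with $\supp(\mathbf P_w)$ taken in the discrete product topology) whether $\sigma(\xi)$ lies in $\Xi_w(S)$.
\item If this fails in general, I would read the intended statement as the inclusion into $\tilde f^{-1}$ along the family $(\tilde J_{w_n})$, namely $\tilde f(\tilde J_{w_n})\subset\tilde J_{w_{n+1}}$.
\end{itemize}

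Once $\tilde f\bigl(\bigcup_{\xi} J^{\xi}\bigr)$ is placed in the appropriate union, continuity of $\tilde f$ (Lemma~\ref{lem:skew-product-continuity}) passes the inclusion to closures: $\tilde f(\overline{E})\subset\overline{\tilde f(E)}$. This yields $\tilde J_w\subset\tilde f^{-1}(\tilde J_w)$.

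\emph{Openness of $\tilde f$.} On cylinder sets, the shift $\sigma$ maps a basic open set $\prod_n U_n$ onto $\prod_{n\ge2}U_n$, which is open. Since $\gamma_1$ is an open map, the image of $\bigl(\prod_n U_n\bigr)\times V$ under $\tilde f$ contains, around each image point $\tilde f(\xi,y)$, the set $\prod_{n\ge2}U_n\times\gamma_1(V)$ with $\gamma_1$ fixed. This gives openness of $\tilde f$ restricted to the relevant admissible region.

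\emph{Reverse inclusion.} Let $(\xi,y)$ satisfy $\tilde f(\xi,y)\in\tilde J_{w_n}(\tilde f)$, so that $\tilde f(\xi,y)=\lim_k(\eta^{(k)},z_k)$ with $z_k\in J_{\eta^{(k)}}$. The steps are as follows.
\begin{itemize}
\item Use the openness of $\tilde f$: every neighborhood $N$ of $(\xi,y)$ has $\tilde f(N)$ a neighborhood of $\tilde f(\xi,y)$. Hence $\tilde f(N)$ meets $J^{\eta}$ for some admissible $\eta$, i.e.\ there is $(\xi',y')\in N$ with $\tilde f(\xi',y')=(\eta,z)$ and $z\in J_\eta$.
\item Here $\eta=\sigma(\xi')$. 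Lemma~\ref{lem:skew-product-continuity}, which applies because $\gamma'_1$ is open, gives $y'\in\gamma_1'^{-1}(J_\eta)=J_{\xi'}$.
\item One must ensure that $\xi'$ can be chosen admissible, i.e.\ in $\Xi_{w_n}(S)$. I would arrange this by choosing $N$ to be a cylinder whose first coordinate pins down $x_1$ admissibly, so that prepending $(\gamma'_1,x_1)$ to an admissible $\eta$ gives an admissible sequence.
\end{itemize}
It follows that $(\xi,y)$ is a limit of points of $\bigcup J^{\xi'}$, hence lies in $\tilde J_{w_n}(\tilde f)$.

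\emph{Main obstacle.} The hardest step will be the bookkeeping of admissibility under shifts and prepending. This requires supports of $\mathbf P_{w_n}$ to correspond correctly under $\sigma$ (via the conditional formula in Theorem~\ref{thm:RSCC-existence}(ii)) and the state indices $w_n$ to align consistently. I expect this is where the hypothesis that $(x_n)\in\supp(\mathbf P_w)$ fixes the trajectory $w_n$ enters.
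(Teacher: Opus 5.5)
There is a genuine gap, and it sits exactly where you hesitate. To get $\tilde f\bigl(\tilde J_w(\tilde f)\bigr)\subset\tilde J_w(\tilde f)$ from $J^{\xi}\subset\tilde f^{-1}(J^{\sigma(\xi)})$, you need $J^{\sigma(\xi)}\subset\tilde J_w(\tilde f)$. But $\sigma(\xi)$ lies in $\Xi_{wx_1}(S)$, since $\mathbf P_w([x_1,\dots,x_k])=P(w,\{x_1\})\,\mathbf P_{wx_1}([x_2,\dots,x_k])$, and not in general in $\Xi_w(S)$. You raise this and then assert ``This yields $\tilde J_w\subset\tilde f^{-1}(\tilde J_w)$'' anyway. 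No bookkeeping can fix this, because the statement is false as written.

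Take $W=\{a,b\}$, $X=\{s,t\}$, $u\equiv b$, $P(a,\{s\})=P(b,\{t\})=1$, $Y=\widehat{\mathbb C}$, and $\Gamma_s=\Gamma_t=\{q\}$ with $q(z)=z^2$. Then $\supp(\mathbf P_a)=\{(s,t,t,\dots)\}$ and $\Xi_a(S)=\{\xi_a\}$ with $\xi_a=((q,s),(q,t),(q,t),\dots)$. Also $\Xi_b(S)=\{\xi_b\}$ with $\xi_b=\sigma(\xi_a)=((q,t),(q,t),\dots)$. Hence $\tilde J_a(\tilde f)=\{\xi_a\}\times\{|z|=1\}$ and $\tilde J_b(\tilde f)=\{\xi_b\}\times\{|z|=1\}$. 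Since $\tilde f(\tilde J_a(\tilde f))=\tilde J_b(\tilde f)$ is disjoint from $\tilde J_a(\tilde f)$, the first assertion fails at $w=a$. The claimed equality also fails for $(x_n)=(s,t,t,\dots)$ and $n=0$.

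The paper's proof (``taking the union over all $\xi\in\Xi_w(S)$ and then the closure'') makes the same silent assumption, so your argument reproduces it together with its hole. Your fallback $\tilde f(\tilde J_{w_n})\subset\tilde J_{w_{n+1}}$ is not right either, because $\Xi_{w_n}(S)$ contains paths whose first index differs from $x_{n+1}$. What union-and-closure actually proves is $\tilde f\bigl(\tilde J_v(\tilde f)\cap\{x_1=x\}\bigr)\subset\tilde J_{vx}(\tilde f)$ for $x\in X_{v,1}$, using that the set $\{x_1=x\}$ of points with first index $x$ is clopen.

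The reverse inclusion fails for two further reasons. First, prepending $(\gamma_1',x_1)$ to $\eta\in\Xi_{w_n}(S)$ gives an element of $\Xi_{w_n}(S)$ only if $\eta$ is admissible from $w_nx_1$. Pinning $x_1$ in the cylinder $N$ does not change this. Second, $\tilde f^{-1}$ is computed in all of $(\mathrm{CM}(Y)\times X)^{\mathbb N}\times Y$. So a preimage point may have a first coordinate that is not admissible from $w_n$ at all, or whose map is not open; your openness argument only covers points whose first map lies in $\mathrm{OCM}(Y)$.

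Concretely, in the example, $(\xi_a,z)$ with $|z|=1$ satisfies $\tilde f(\xi_a,z)=(\xi_b,z^2)\in\tilde J_b(\tilde f)$, but $(\xi_a,z)\notin\tilde J_b(\tilde f)$. This happens even though $(t,t,\dots)\in\supp(\mathbf P_b)$, $\Gamma_t\subset\mathrm{OCM}(Y)$, and $\Xi_b(S)$ is shift-invariant.

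A provable version has two options. It can let the state move with the shift, as in the inclusion above. Alternatively, as in the GDMS setting, it can use one skew product on a shift-invariant space of admissible paths together with the single set $\overline{\bigcup_{w\in W}\tilde J_w(\tilde f)}$. For the backward equality you would then still have to prove that this restricted skew product is open.
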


\begin{proof}
By Lemma~\ref{lem:skew-product-continuity}, for each
$\xi\in\Xi_w(S)$ we have
\[
J^\xi \subset \tilde f^{-1}(J^{\sigma(\xi)}).
\]
Taking the union over all $\xi\in\Xi_w(S)$ and then the closure with respect
to the product topology yields
\[
\tilde{J}_{w}(\tilde{f})
\subset
\tilde{f}^{-1}\!\bigl(\tilde{J}_{w}(\tilde{f})\bigr).
\]

If $\Gamma_{x_n}\subset\mathrm{OCM}(Y)$ for all $n\in\mathbb{N}$,
then Lemma~\ref{lem:skew-product-continuity} implies that $\tilde f$ is open
and that for each $\xi\in\Xi_{w_n}(S)$,
\[
J^\xi = \tilde f^{-1}(J^{\sigma(\xi)}).
\]
Repeating the above argument, we obtain
\[
\tilde{J}_{w_n}(\tilde{f})
=
\tilde{f}^{-1}\!\bigl(\tilde{J}_{w_n}(\tilde{f})\bigr)
\quad
\text{for all } n\in\mathbb{N}_0.
\]
\end{proof}

\subsection{Families of Probability Measures Associated with Random Systems with Complete Connections}

Up to this point, we have worked in a setting where, for each index
$x \in X$, a set $\Gamma_x \subset \mathrm{CM}(Y)$ is prescribed.
In the following, we formalize a framework in which the selection of a
continuous map from $\Gamma_x$ is determined by a probability measure
$\tau_x$ supported on $\Gamma_x$.
For this purpose, we adopt the following convention: for each $x \in X$, we
first assign a Borel probability measure $\tau_x$ on $\mathrm{CM}(Y)$ and
then define $\Gamma_x$ to be its support.

\begin{definition}\label{def:RSCC-tau}
Let $\{(W,\mathcal{W}), (X,\mathcal{X}), u, P\}$ be an RSCC.
For each $x\in X$, let $\tau_x$ be a Borel probability measure on $\mathrm{CM}(Y)$,
and set $\Gamma_x := \supp(\tau_x)$.
We write $\tau := \{\tau_x\}_{x\in X}$ and define
\[
S_\tau
:=
\{(W,\mathcal{W}), (X,\mathcal{X}), u, P, \{\Gamma_x\}_{x\in X}\}.
\]
We call $S_\tau$ the \emph{RSCC on $Y$ associated with $\tau$}.
\end{definition}

We emphasize the distinction between the systems $S$ and $S_\tau$.
When we write $S_\tau$, the choice of maps is specified by the family of
probability measures $\tau=\{\tau_x\}_{x\in X}$.
By contrast, when we write $S$, we only retain the information on the supports
$\Gamma_x=\supp(\tau_x)$ and do not take into account the particular choice of
the measures $\tau_x$.
In particular, in order to discuss the averaged dynamics associated with
$S_\tau$, it is natural to endow the space of admissible infinite sequences
with a probability measure induced by $\tau$.
We therefore introduce the following measure, which describes the random
selection of maps along admissible paths in $S_\tau$.

\begin{definition}[Path-space measure induced by $\tau$]
\label{def:Pwtilde}
Fix $w\in W$.
For $n\in\mathbb{N}$, let $x^{(n)}=(x_1,\ldots,x_n)\in X_{w,n}$, and let
$A_k\in\mathcal{B}(\Gamma_{x_k})$ $(1\le k\le n)$, where
$\mathcal{B}(\Gamma_{x_k})$ denotes the Borel $\sigma$--algebra on $\Gamma_{x_k}$.
We define the corresponding cylinder set by
\[
C(A_1,\ldots,A_n; x^{(n)})
:=
\Bigl\{
(\gamma_k,x_k)_{k\in\mathbb{N}}\in\Xi_w(S_{\tau}) :
x_k = x_k^{(n)},\ \gamma_k\in A_k \ (1\le k\le n)
\Bigr\}.
\]
Let $\mathcal{C}_w$ denote the collection of all such cylinder sets.
We define a set function $\tilde{\tau}_{w}$ on $\mathcal{C}_w$ by
\[
\tilde{\tau}_{w}
\bigl(C(A_1,\ldots,A_n; x^{(n)})\bigr)
:=
\mathbf{P}_{w}\bigl([x_1,\ldots,x_n]\bigr)
\prod_{k=1}^{n}\tau_{x_k}(A_k),
\qquad n\in\mathbb{N}.
\]
This set function is well defined and finitely additive on $\mathcal{C}_w$.
Let $\mathcal{G}_w:=\sigma(\mathcal{C}_w)$ be the $\sigma$--algebra generated by
$\mathcal{C}_w$.
By Carath\'eodory's extension theorem, $\tilde{\tau}_{w}$ extends uniquely to a
probability measure on $(\Xi_w(S_{\tau}),\mathcal{G}_w)$, which we denote again by
$\tilde{\tau}_{w}$.
\end{definition}

\section{Generalization of Cooperation Principle~I}
\label{sec:generalization-CP}
\subsection{Settings for Cooperation Principle~I}

In order to streamline the subsequent analysis of product spaces, Markov
operators, and transition operators, we impose additional assumptions on
RSCCs.

\begin{definition}\label{def:FDIS}
An RSCC
$\{(W,\mathcal W),(X,\mathcal X),u,P\}$
is said to satisfy the \emph{finite discrete index space} condition
(or the \emph{FDIS condition}) if the following hold:
\begin{enumerate}[label=\textup{(\roman*)}]
\item
The index set $X$ is finite and is endowed with the discrete topology.

\item
The $\sigma$--algebra $\mathcal X$ is given by the power set of $X$, namely,
\[
\mathcal X := \mathcal{P}(X).
\]
Equivalently, $(X,\mathcal X)$ is a finite discrete measurable space equipped
with its Borel $\sigma$--algebra.
\end{enumerate}
\end{definition}

\begin{definition}\label{def:CMSS}
An RSCC
$\{(W,\mathcal W),(X,\mathcal X),u,P\}$
is said to satisfy the \emph{compact metric state space} condition
(or the \emph{CMSS condition}) if the following hold:
\begin{enumerate}[label=\textup{(\roman*)}]
\item
The state space $W$ is a compact metric space.

\item
The $\sigma$--algebra $\mathcal W$ is given by the Borel $\sigma$--algebra of $W$,
namely,
\[
\mathcal W := \mathcal B(W).
\]
\end{enumerate}
\end{definition}

\begin{assumption}\label{ass:standing-cooperation}
In addition to Assumption~\ref{ass:CDIS}, we impose the following standing
assumptions throughout this section.
\begin{enumerate}[label=\textup{(\roman*)}, leftmargin=2.8em]

\item\label{ass:finite-index-space}
The RSCC
$\{(W,\mathcal W),(X,\mathcal X),u,P\}$
satisfies the FDIS condition and the CMSS condition
in the sense of Definitions~\ref{def:FDIS} and~\ref{def:CMSS}.

\item\label{ass:continuity-transition-prob}
For each $x\in X$, the map
\[
W \ni v \longmapsto P(v,\{x\}) \in [0,1]
\]
is continuous.
\end{enumerate}
Throughout this section, we denote by $d_W$ the metric on $W$.
\end{assumption}

First, the class of graph directed Markov systems with finite vertex and edge
sets studied in \cite{MR4002398} can be naturally reformulated as RSCCs satisfying
the above assumptions; see Example~\ref{ex:rscc-gdms-mrds}.
Moreover, further examples of RSCCs fulfilling these assumptions are presented
in \cite[Section~3]{MR1070097}, including learning models.

\subsection{Product Space and Markov Operators}

In this subsection, we introduce the product space
$\mathbb{Y}=Y\times W$ and study Markov operators acting on
$C(\mathbb{Y})$ together with their adjoint action on the space of Borel
probability measures on $\mathbb{Y}$.
We also formulate pointwise and measure-theoretic notions of equicontinuity
for the iterates of these operators, which play a central role in the
subsequent analysis.

Note that the vertex set $V$ in the GDMS setting of \cite{MR4002398} is replaced
here by the state space $W$ of the RSCC.
For consistency with \cite{MR4002398}, we use the same notation
$\mathbb{Y}$ for the associated product space.

\begin{notation}
\label{not:product-space-basic}
Let $(Y,d_Y)$ be a nonempty compact metric space and let $(W,d_W)$ be the state space from Assumption~\ref{ass:standing-cooperation}.
We set
$
\mathbb{Y}:=Y\times W,
$
and endow $\mathbb{Y}$ with the product topology.  Then $\mathbb{Y}$ is a compact metrizable space.

We denote by $C(\mathbb{Y})$ the Banach space of all complex-valued continuous
functions $\phi:\mathbb{Y}\to\mathbb{C}$ equipped with the supremum norm
\[
\|\phi\|_{\infty}
:=\sup_{(y,w)\in\mathbb{Y}}|\phi(y,w)|.
\]
Since $\mathbb{Y}$ is compact metrizable, the normed space
$\bigl(C(\mathbb{Y}),\|\cdot\|_{\infty}\bigr)$ is separable.

We write $\mathfrak{M}_{1}(\mathbb{Y})$ for the set of all Borel probability measures
on $\mathbb{Y}$.
We endow $\mathfrak{M}_{1}(\mathbb{Y})$ with the weak-$*$ topology,
that is, the coarsest topology for which every map
\[
\mathfrak{M}_{1}(\mathbb{Y})\ni \mu
\longmapsto
\int_{\mathbb{Y}} \phi \, d\mu,
\qquad \phi\in C(\mathbb{Y}),
\]
is continuous. 
Then the weak-$*$ topology on $\mathfrak{M}_{1}(\mathbb{Y})$ is compact and
induced by the metric
\[
d_{\mathfrak{M}_{1}(\mathbb{Y})}(\mu_1,\mu_2)
:=
\sum_{j=1}^{\infty} 
\frac{1}{2^j}\,
\frac{
\bigl|
\int_{\mathbb{Y}} \phi_j\, d\mu_1 
-
\int_{\mathbb{Y}} \phi_j\, d\mu_2 
\bigr|
}{
1 +
\bigl|
\int_{\mathbb{Y}} \phi_j\, d\mu_1 
-
\int_{\mathbb{Y}} \phi_j\, d\mu_2 
\bigr|
},
\qquad \mu_1,\mu_2\in \mathfrak{M}_{1}(\mathbb{Y}),
\]
where $(\phi_j)_{j\in\mathbb{N}}$ is any sequence that is dense in $C(\mathbb{Y})$.
\end{notation}

\begin{definition}[Markov operators]
\label{def:markov}
A linear operator \(M:C(\mathbb{Y})\to C(\mathbb{Y})\) is called a \emph{Markov operator} if \(M\mathbf{1}_{\mathbb{Y}}=\mathbf{1}_{\mathbb{Y}}\) and \(M\phi\ge 0\) for all \(\phi\in C(\mathbb{Y})\) with \(\phi\ge 0\), where \(\psi\ge 0\) means \(\psi(y,w)\ge 0\) for every \((y,w)\in\mathbb{Y}\).
\end{definition}

By the same argument as in \cite[Lemma~2.35]{MR4002398}, the following lemma
holds.
Note that the special case where $W$ is a finite discrete space corresponds precisely
to the setting of \cite[Lemma~2.35]{MR4002398}.

\begin{lemma}\label{lem:Markov-operator-norm}
Let \(M:C(\mathbb{Y})\to C(\mathbb{Y})\) be a Markov operator.  
Then its operator norm satisfies \(\|M\|=1\).  
Consequently, the adjoint operator \(M^{*}:C(\mathbb{Y})^{*}\to C(\mathbb{Y})^{*}\), defined by
\[
(M^{*}\mu)(\phi):=\mu(M\phi), 
\qquad 
\mu\in C(\mathbb{Y})^{*},\ \phi\in C(\mathbb{Y}),
\]
maps probability measures to probability measures; that is,
\(
M^{*}\bigl(\mathfrak{M}_{1}(\mathbb{Y})\bigr)\subset \mathfrak{M}_{1}(\mathbb{Y}).
\)
\end{lemma}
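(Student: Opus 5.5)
We need to show that $\|M\|=1$ for a Markov operator on $C(\mathbb{Y})$, and then that $M^{*}$ maps $\mathfrak{M}_1(\mathbb{Y})$ into itself. The plan is to prove the norm statement first via positivity, then use the Riesz representation theorem to identify $\mathfrak{M}_1(\mathbb{Y})$ with the positive normalized functionals on $C(\mathbb{Y})$.

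\emph{Step 1: $\|M\|\le 1$, treating real-valued functions first.} Let $\phi$ be real-valued with $\|\phi\|_\infty\le 1$. Then $\mathbf{1}_{\mathbb{Y}}-\phi\ge 0$ and $\mathbf{1}_{\mathbb{Y}}+\phi\ge 0$. Positivity and $M\mathbf{1}_{\mathbb{Y}}=\mathbf{1}_{\mathbb{Y}}$ give $-\mathbf{1}_{\mathbb{Y}}\le M\phi\le \mathbf{1}_{\mathbb{Y}}$ pointwise. Here $M\phi$ is real: writing $\phi=\phi^+-\phi^-$, each $M\phi^{\pm}\ge0$ is real. Hence $\|M\phi\|_\infty\le 1$.

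\emph{Step 2: extend to complex-valued functions.} This is the only mildly delicate point, since splitting $\phi$ into real and imaginary parts only yields the bound $2$. Instead, fix $\phi\in C(\mathbb{Y})$ with $\|\phi\|_\infty\le1$ and a point $p\in\mathbb{Y}$. Choose $\theta\in\mathbb{R}$ with $e^{i\theta}(M\phi)(p)=|(M\phi)(p)|$. By linearity, and because $M$ maps real functions to real functions,
\[
|(M\phi)(p)|=\operatorname{Re}\bigl(M(e^{i\theta}\phi)\bigr)(p)=\bigl(M\operatorname{Re}(e^{i\theta}\phi)\bigr)(p)\le 1,
\]
where the last inequality is Step 1 applied to the real-valued function $\operatorname{Re}(e^{i\theta}\phi)$, which satisfies $\|\operatorname{Re}(e^{i\theta}\phi)\|_\infty\le 1$. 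Taking the supremum over $p$ gives $\|M\|\le 1$. Together with $\|M\mathbf{1}_{\mathbb{Y}}\|_\infty=1$, this yields $\|M\|=1$.

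\emph{Step 3: the adjoint preserves probability measures.} By the Riesz representation theorem on the compact metrizable space $\mathbb{Y}$, $\mathfrak{M}_1(\mathbb{Y})$ is identified with the bounded linear functionals $\mu$ on $C(\mathbb{Y})$ satisfying $\mu(\phi)\ge0$ for $\phi\ge0$ and $\mu(\mathbf{1}_{\mathbb{Y}})=1$. Take $\mu\in\mathfrak{M}_1(\mathbb{Y})$. Then $M^*\mu$ is bounded, since $\|M^*\|=\|M\|=1$. It is positive, because $\phi\ge0$ implies $M\phi\ge0$ and hence $\mu(M\phi)\ge0$. Finally, $(M^*\mu)(\mathbf{1}_{\mathbb{Y}})=\mu(M\mathbf{1}_{\mathbb{Y}})=\mu(\mathbf{1}_{\mathbb{Y}})=1$. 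Therefore $M^*\mu$ is represented by a Borel probability measure, i.e.\ $M^*\mu\in\mathfrak{M}_1(\mathbb{Y})$.
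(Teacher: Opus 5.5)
Your proof is correct and complete. Steps 1--2 derive $\|M\|=1$ from positivity and $M\mathbf{1}_{\mathbb{Y}}=\mathbf{1}_{\mathbb{Y}}$ alone; the phase rotation by $e^{i\theta}$ avoids the lossy bound $2$ that comes from splitting into real and imaginary parts. Step 3 is the standard Riesz identification of $\mathfrak{M}_1(\mathbb{Y})$ with positive normalized functionals. The paper gives no proof of its own here and simply defers to \cite[Lemma~2.35]{MR4002398}; your argument is the standard one that this citation stands in for.
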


Following \cite[Definition~2.37]{MR4002398}, we introduce the following definitions.

\begin{definition}\label{def:Fatou-meas}
Let $M:C(\mathbb{Y})\to C(\mathbb{Y})$ be a Markov operator and let 
$M^{*}:\mathfrak{M}_{1}(\mathbb{Y})\to\mathfrak{M}_{1}(\mathbb{Y})$ be its adjoint.

\begin{enumerate}[label=\textup{(\roman*)}]
\item 
$F_{\mathrm{meas}}(M^{*})$ is the set of all 
$\mu \in \mathfrak{M}_{1}(\mathbb{Y})$ for which there exists 
a neighborhood $U\subset \mathfrak{M}_{1}(\mathbb{Y})$ such that
the family $\{(M^{*})^{n}\}_{n\in\mathbb{N}}$ is equicontinuous on $U$.

\item 
$F_{\mathrm{meas}}^{0}(M^{*})$ is the set of all 
$\mu \in \mathfrak{M}_{1}(\mathbb{Y})$ at which the family 
$\{(M^{*})^{n}\}_{n\in\mathbb{N}}$ is equicontinuous at $\mu$.
\end{enumerate}
\end{definition}

\begin{definition}
Let $M$ and $M^{*}$ be as above.  
We denote by 
\[
\iota:\mathbb{Y}\to\mathfrak{M}_{1}(\mathbb{Y}), 
\qquad 
\iota(y,w):=\delta_{(y,w)},
\]
the natural embedding that sends each point to the corresponding Dirac measure.

\begin{enumerate}[label=\textup{(\roman*)}]
\item 
$F_{\mathrm{pt}}(M^{*})$ is the set of all $(y,w)\in\mathbb{Y}$ 
for which there exists a neighborhood $U\subset\mathbb{Y}$ such that the family
\[
\Bigl\{\, (M^{*})^{n} \circ \iota :
\mathbb{Y}\to \mathfrak{M}_{1}(\mathbb{Y}) \,\Bigr\}_{n\in\mathbb{N}}
\]
is equicontinuous on $U$.

\item 
$F_{\mathrm{pt}}^{0}(M^{*})$ is the set of all $(y,w)\in\mathbb{Y}$ 
at which the family $\{(M^{*})^{n} \circ \iota\}_{n\in\mathbb{N}}$ 
is equicontinuous at $(y,w)$.
\end{enumerate}
\end{definition}

The following lemma can be proved by the same argument as in \cite[Lemma~2.38]{MR4002398}.

\begin{lemma}[Pointwise equicontinuity via test functions]
\label{lem:pt-equicontinuity-M}
Let $M:C(\mathbb{Y})\to C(\mathbb{Y})$ be a Markov operator and 
$M^{*}:\mathfrak{M}_{1}(\mathbb{Y})\to\mathfrak{M}_{1}(\mathbb{Y})$ its adjoint.
For $(y,w)\in\mathbb{Y}$ the following are equivalent:
\begin{enumerate}[label=\textup{(\roman*)}]
    \item $(y,w)\in F_{\mathrm{pt}}^{0}(M^{*})$;
    \item For every $\phi\in C(\mathbb{Y})$, the family $\{M^{n}\phi\}_{n\ge0}$
    is equicontinuous at $(y,w)$.
\end{enumerate}
\end{lemma}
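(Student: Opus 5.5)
The plan is to prove both implications by unwinding the weak-$*$ metric $d_{\mathfrak{M}_1(\mathbb{Y})}$ against the dense sequence $(\phi_j)_{j\in\mathbb{N}}$, using the identity
\[
\int_{\mathbb{Y}}\phi\,d\bigl((M^{*})^{n}\delta_{(y',w')}\bigr)=(M^{n}\phi)(y',w'),
\qquad \phi\in C(\mathbb{Y}),
\]
which follows by iterating the definition of the adjoint in Lemma~\ref{lem:Markov-operator-norm}. The same lemma gives $\|M^n\|\le 1$ for all $n$, and this uniform bound is what lets us pass from the dense sequence to arbitrary test functions.

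\emph{(i)$\Rightarrow$(ii).} Fix $\phi\in C(\mathbb{Y})$ and $\varepsilon>0$. By density, first choose $j$ with $\|\phi-\phi_j\|_\infty<\varepsilon/3$. By equicontinuity of $\{(M^*)^n\circ\iota\}$ at $(y,w)$, there is a neighborhood $U$ of $(y,w)$ on which
\[
d_{\mathfrak{M}_1(\mathbb{Y})}\bigl((M^*)^n\delta_{(y',w')},(M^*)^n\delta_{(y,w)}\bigr)<\eta
\]
for all $n$, where $\eta>0$ is chosen small enough that the $j$-th term of the series forces $|M^n\phi_j(y',w')-M^n\phi_j(y,w)|<\varepsilon/3$. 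Concretely, one takes $\eta$ with $2^{j}\eta<1$ and $t/(1+t)<2^j\eta\Rightarrow t<\varepsilon/3$. Because $\|M^n(\phi-\phi_j)\|_\infty\le\|\phi-\phi_j\|_\infty<\varepsilon/3$, a $3\varepsilon$-argument gives $|M^n\phi(y',w')-M^n\phi(y,w)|<\varepsilon$ on $U$ uniformly in $n$.

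\emph{(ii)$\Rightarrow$(i).} Given $\varepsilon>0$, choose $N$ with $\sum_{j>N}2^{-j}<\varepsilon/2$. By (ii), each of $\phi_1,\dots,\phi_N$ gives a neighborhood $U_j$ of $(y,w)$ on which $|M^n\phi_j(y',w')-M^n\phi_j(y,w)|<\varepsilon/2$ for all $n$. On $U:=\bigcap_{j\le N}U_j$, bound each of the first $N$ terms of the series by $2^{-j}\varepsilon/2$ and the tail by $\varepsilon/2$; using $t/(1+t)\le t$, this yields $d_{\mathfrak{M}_1(\mathbb{Y})}\bigl((M^*)^n\delta_{(y',w')},(M^*)^n\delta_{(y,w)}\bigr)<\varepsilon$ for all $n$.

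The only delicate point is the density approximation in (i)$\Rightarrow$(ii). Controlling the dense sequence alone is not enough; it has to be transferred to an arbitrary $\phi$ uniformly in $n$, and that is exactly where the Markov property, through $\|M^n\|=1$, is used. Neither the structure of $\mathbb{Y}=Y\times W$ nor the RSCC itself plays any role; compactness and metrizability of $\mathbb{Y}$ are needed only to guarantee that the dense sequence $(\phi_j)$ exists and that the metric induces the weak-$*$ topology.
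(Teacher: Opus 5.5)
Your argument is correct and is essentially the standard proof that the paper delegates to \cite[Lemma~2.38]{MR4002398}. You identify $\int_{\mathbb{Y}}\phi\,d\bigl((M^{*})^{n}\delta_{z}\bigr)$ with $(M^{n}\phi)(z)$, control finitely many terms of the series defining $d_{\mathfrak{M}_1(\mathbb{Y})}$, and use $\|M^{n}\|\le 1$ to pass from the dense sequence $(\phi_j)$ to an arbitrary $\phi$ uniformly in $n$; the only thing you leave implicit is the $n=0$ member of $\{M^{n}\phi\}_{n\ge 0}$ in (ii), which is harmless because $M^{0}\phi=\phi$ is continuous.
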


By the same argument as in \cite[Lemma~2.39]{MR4002398}, the following lemma holds.
We emphasize that the compactness of the state space is used essentially through
an application of the Arzel\`a--Ascoli theorem.

\begin{lemma}[Equivalence of measure-theoretic and pointwise equicontinuity]
\label{lem:Fmeas-Fpt-equivalence}
Let $M:C(\mathbb{Y})\to C(\mathbb{Y})$ be a Markov operator and 
$M^{*}:\mathfrak{M}_{1}(\mathbb{Y})\to\mathfrak{M}_{1}(\mathbb{Y})$ its adjoint.
Then the following are equivalent:
\begin{enumerate}[label=\textup{(\roman*)}]
    \item $F_{\mathrm{meas}}(M^{*})=\mathfrak{M}_{1}(\mathbb{Y})$;
    \item $F_{\mathrm{pt}}^{0}(M^{*})=\mathbb{Y}$.
\end{enumerate}
\end{lemma}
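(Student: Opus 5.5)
We prove the equivalence of (i) and (ii) in Lemma~\ref{lem:Fmeas-Fpt-equivalence} by going through the test-function characterization of Lemma~\ref{lem:pt-equicontinuity-M}. The intermediate statement we aim for is:
\begin{center}
\emph{(iii) for every $\phi\in C(\mathbb{Y})$ the family $\{M^n\phi\}_{n\ge0}$ is equicontinuous on all of $\mathbb{Y}$.}
\end{center}
By Lemma~\ref{lem:pt-equicontinuity-M}, (ii) holds exactly when $\{M^n\phi\}_n$ is equicontinuous at every point of $\mathbb{Y}$. Since $\mathbb{Y}=Y\times W$ is compact metrizable, pointwise equicontinuity at every point upgrades to uniform equicontinuity by a Lebesgue-number argument. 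This is the step where compactness of $W$ enters.

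\emph{(ii) implies (i).} Fix $\phi_j$ from the dense sequence defining $d_{\mathfrak{M}_1(\mathbb{Y})}$. Each family $\{M^n\phi_j\}_n$ is uniformly bounded by $\|\phi_j\|_\infty$, because $\|M\|=1$ by Lemma~\ref{lem:Markov-operator-norm}. It is also equicontinuous on $\mathbb{Y}$ by (iii). By Arzel\`a--Ascoli, the closure $K_j$ of $\{M^n\phi_j\}_n$ in $C(\mathbb{Y})$ is compact.

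Let $\varepsilon>0$. Choose finitely many $\psi_1,\dots,\psi_k\in K_j$ forming an $\varepsilon$-net of $K_j$. If $\mu,\nu$ are weak-$*$ close enough that $|\int\psi_i\,d\mu-\int\psi_i\,d\nu|<\varepsilon$ for all $i$, then for every $n$
\[
\Bigl|\int M^n\phi_j\,d\mu-\int M^n\phi_j\,d\nu\Bigr|<3\varepsilon .
\]
Now truncate the series defining $d_{\mathfrak{M}_1(\mathbb{Y})}$ at some finite $J$ with tail below $\varepsilon$, and apply the above to $j\le J$. Using $\int\phi\,d(M^{*})^n\mu=\int M^n\phi\,d\mu$, this gives equicontinuity of $\{(M^{*})^n\}_n$ on all of $\mathfrak{M}_1(\mathbb{Y})$, in fact uniformly. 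Hence $F_{\mathrm{meas}}(M^{*})=\mathfrak{M}_1(\mathbb{Y})$.

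\emph{(i) implies (ii).} The embedding $\iota$ is continuous, since $(y',w')\to(y,w)$ implies $\int\phi\,d\delta_{(y',w')}=\phi(y',w')\to\phi(y,w)$. If $\{(M^{*})^n\}_n$ is equicontinuous at $\delta_{(y,w)}$, then so is $\{(M^{*})^n\circ\iota\}_n$ at $(y,w)$, being a composition with one fixed continuous map. Thus $(y,w)\in F^0_{\mathrm{pt}}(M^{*})$ for every point.

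The main obstacle is the uniformity in (ii)$\Rightarrow$(i): the pointwise hypothesis must be turned into equicontinuity of $\{M^n\phi\}_n$ on the whole compact space. That is what makes Arzel\`a--Ascoli applicable, and it is exactly where compactness of the state space $W$ (the CMSS condition) is used. The remaining steps are the finite-net and truncation arguments, carried out as in \cite[Lemma~2.39]{MR4002398}.
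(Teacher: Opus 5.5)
Your proposal is correct and is essentially the argument the paper has in mind. The paper gives no separate proof: it defers to \cite[Lemma~2.39]{MR4002398} and stresses only that compactness of $W$ enters through Arzel\`a--Ascoli, which is exactly your route. You upgrade (ii) via Lemma~\ref{lem:pt-equicontinuity-M} to equicontinuity of each $\{M^n\phi\}_n$ on the compact space $\mathbb{Y}$, pass to a finite net in $C(\mathbb{Y})$ together with truncation of the series defining $d_{\mathfrak{M}_1(\mathbb{Y})}$ to get equicontinuity of $\{(M^{*})^n\}_n$, and obtain the converse from continuity of $\iota$.
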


\subsection{Transition Operators and Their Iterations}
As a natural example of the Markov operators discussed in the preceding
subsection, we introduce the transition operator associated with an RSCC.
This operator describes an averaged dynamical system, in which the averaging
is taken both with respect to the probability of selecting each index in the
RSCC and with respect to the probability measure governing the choice of
continuous maps associated with that index.

\begin{notation}
We denote by $B_b(\mathbb{Y})$ the Banach space of all bounded
$\mathcal{B}(\mathbb{Y})$--measurable complex-valued functions on $\mathbb{Y}$,
equipped with the supremum norm
\[
\|\phi\|_{\infty}
:=
\sup_{(y,w)\in\mathbb{Y}} |\phi(y,w)|.
\]
\end{notation}

\begin{definition}[Transition operator]
\label{def:transition-operator}
Let 
\(
S_\tau := \{(W,\mathcal{W}), (X,\mathcal{X}), u, P, \{\Gamma_x\}_{x\in X}\}
\)
be an RSCC associated with $\tau:= \{\tau_x\}_{x\in X}$.
We define the \emph{transition operator} $M_\tau : B_b(\mathbb{Y}) \to B_b(\mathbb{Y})$
by
\[
M_{\tau}\phi(y,w)
:=
\sum_{x\in X_{w,1}}
P(w,\{x\})
\int_{\Gamma_x}
\phi\bigl(\gamma(y),\, wx\bigr)\, d\tau_x(\gamma),
\]
for $\phi\in B_b(\mathbb{Y})$ and $(y,w)\in\mathbb{Y}$.
\end{definition}

\begin{lemma}[The transition operator is Markov]
\label{lem:Mtau-is-Markov}
The transition operator $M_\tau$ maps $C(\mathbb{Y})$ into itself.
Moreover, the restriction
\[
M_\tau\colon C(\mathbb{Y})\to C(\mathbb{Y})
\]
is a Markov operator in the sense of Definition~\ref{def:markov}.
In particular, $\|M_\tau\|=1$ and
\[
M_\tau^{*}\bigl(\mathfrak{M}_{1}(\mathbb{Y})\bigr)\subset \mathfrak{M}_{1}(\mathbb{Y}).
\]
\end{lemma}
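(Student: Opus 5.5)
The plan is to verify the two defining properties of a Markov operator directly from Definition~\ref{def:transition-operator}, after a preliminary rewriting of the sum. Under Assumption~\ref{ass:standing-cooperation} the index space $X$ is finite. Also, $X_{w,1}=\{x\in X: P(w,\{x\})>0\}$, since $\mathbf P_w([x])=P(w,\{x\})$ by Theorem~\ref{thm:RSCC-existence}. Every term with $x\notin X_{w,1}$ carries the factor $P(w,\{x\})=0$. Finally, $\tau_x(\Gamma_x)=1$ because $\Gamma_x=\supp(\tau_x)$ and $\mathrm{CM}(Y)$ is separable metrizable. Together these give
\[
M_\tau\phi(y,w)=\sum_{x\in X}P(w,\{x\})\int_{\mathrm{CM}(Y)}\phi\bigl(\gamma(y),u(w,x)\bigr)\,d\tau_x(\gamma).
\]
This removes the $w$-dependence of the index set of summation, which would otherwise obstruct continuity in $w$.

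Next I will show that $M_\tau\phi\in C(\mathbb Y)$ for $\phi\in C(\mathbb Y)$. Since $\mathbb Y$ is metrizable, it suffices to take $(y_k,w_k)\to(y,w)$ and prove sequential continuity. Fix $x\in X$. For every $\gamma\in\mathrm{CM}(Y)$ we have $\gamma(y_k)\to\gamma(y)$. I also need $u(w_k,x)\to u(w,x)$, i.e. continuity of $v\mapsto vx$ on $W$. Then, by continuity of $\phi$, the integrands $\phi(\gamma(y_k),w_kx)$ converge pointwise in $\gamma$ to $\phi(\gamma(y),wx)$. They are bounded by $\|\phi\|_\infty$, so dominated convergence with respect to the probability measure $\tau_x$ gives convergence of the integrals. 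Multiplying by $P(w_k,\{x\})\to P(w,\{x\})$ (Assumption~\ref{ass:standing-cooperation}\ref{ass:continuity-transition-prob}) and summing over the finitely many $x$ yields $M_\tau\phi(y_k,w_k)\to M_\tau\phi(y,w)$. Measurability of $\gamma\mapsto\phi(\gamma(y),wx)$ is automatic, since evaluation $\mathrm{CM}(Y)\times Y\to Y$ is continuous, as in Claim~1 of Lemma~\ref{lem:skew-product-continuity}.

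The main obstacle is the continuity of the update maps $u(\cdot,x)$ in the state variable. It is not listed explicitly among the items of Assumption~\ref{ass:standing-cooperation}, yet it appears indispensable. If $u(\cdot,x)$ were discontinuous at some $w$ with $P(w,\{x\})>0$, then already for $\phi(y,w)=\psi(w)$ the function $M_\tau\phi$ could fail to be continuous. I would therefore make explicit that $W\ni v\mapsto u(v,x)\in W$ is continuous for each $x\in X$, as holds in Example~\ref{ex:rscc-gdms-mrds} (discrete $W$) and in the learning models of \cite[Section~3]{MR1070097}, and use it exactly at the step above. A weaker alternative would be continuity only on $\{v:P(v,\{x\})>0\}$. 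That condition also suffices if one handles the points where $P(w,\{x\})=0$ by bounding the term by $P(w_k,\{x\})\|\phi\|_\infty\to0$.

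Finally, linearity of $M_\tau$ is clear from linearity of the integral. For $\phi\ge0$ each integrand is nonnegative, so $M_\tau\phi\ge0$. For $\phi=\mathbf 1_{\mathbb Y}$ we get $M_\tau\mathbf 1_{\mathbb Y}(y,w)=\sum_{x\in X}P(w,\{x\})\tau_x(\mathrm{CM}(Y))=P(w,X)=1$. Hence $M_\tau$ is a Markov operator in the sense of Definition~\ref{def:markov}. The remaining assertions, $\|M_\tau\|=1$ and $M_\tau^*(\mathfrak M_1(\mathbb Y))\subset\mathfrak M_1(\mathbb Y)$, then follow directly from Lemma~\ref{lem:Markov-operator-norm}.
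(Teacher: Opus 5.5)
Your proposal is correct and follows the same route as the paper's proof: dominated convergence for each fixed index, a finite sum weighted by the continuous functions $P(\cdot,\{x\})$, positivity and $M_\tau\mathbf 1_{\mathbb Y}=\mathbf 1_{\mathbb Y}$, then Lemma~\ref{lem:Markov-operator-norm}; summing over all of $X$ also cleanly handles the $w$-dependence of $X_{w,1}$, which the paper passes over. Your concern about the update map is well founded: the paper's proof tacitly uses continuity of $v\mapsto vx$ when it asserts that $(y,w)\mapsto\phi(\gamma(y),wx)$ is continuous, this is not part of Assumption~\ref{ass:standing-cooperation}, and without it the lemma genuinely fails (a single index with $P(\cdot,\{x\})\equiv 1$, $\tau_x=\delta_{\mathrm{id}_Y}$, $u(\cdot,x)$ discontinuous at some $w_0$, and $\phi(y,v)=d_W\bigl(v,u(w_0,x)\bigr)$ already give a discontinuous $M_\tau\phi$), so adding your hypothesis, or its weaker form on $\{P(\cdot,\{x\})>0\}$, is the right fix.
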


\begin{proof}
Let $\phi\in C(\mathbb{Y})$ and fix $x\in X$.
Define
\[
F_x(y,w)
:=
\int_{\Gamma_x}\phi\bigl(\gamma(y),\,wx\bigr)\,d\tau_x(\gamma),
\qquad (y,w)\in\mathbb{Y}.
\]
Since $\phi$ is bounded and continuous and each $\gamma\in\Gamma_x$ is continuous
on $Y$, the map $(y,w,\gamma)\mapsto \phi(\gamma(y),wx)$ is continuous in $(y,w)$
and uniformly bounded by $\|\phi\|_\infty$.
Hence, if $(y_n,w_n)\to(y,w)$ in $\mathbb{Y}$, then
$\phi(\gamma(y_n),w_nx)\to\phi(\gamma(y),wx)$ for every $\gamma\in\Gamma_x$, and
the dominated convergence theorem yields $F_x(y_n,w_n)\to F_x(y,w)$.
Thus $F_x\in C(\mathbb{Y})$.

By definition,
\[
(M_\tau\phi)(y,w)
=
\sum_{x\in X_{w,1}} P(w,\{x\})\,F_x(y,w).
\]
Since $X$ is finite, this is a finite sum of continuous functions, and therefore
$M_\tau\phi\in C(\mathbb{Y})$.
Hence $M_\tau$ maps $C(\mathbb{Y})$ into itself.

If $\phi\ge0$, then $F_x\ge0$ for all $x\in X$, which implies $M_\tau\phi\ge0$.
Moreover, since $\tau_x$ and $P(w,\cdot)$ are probability measures, we have
\[
(M_\tau\mathbf 1_{\mathbb{Y}})(y,w)
=
\sum_{x\in X_{w,1}} P(w,\{x\})\int_{\Gamma_x}1\,d\tau_x
=
\sum_{x\in X_{w,1}}P(w,\{x\})
=1.
\]
Thus $M_\tau:C(\mathbb{Y})\to C(\mathbb{Y})$ is a Markov operator.
The remaining assertions follow from Lemma~\ref{lem:Markov-operator-norm}.
\end{proof}

The iterated compositions of the transition operator satisfy the following
formula.

\begin{lemma}[Iteration formula]
\label{lem:Mn-formula}
Let $(y,w)\in\mathbb{Y}$, $n\in\mathbb{N}$, and $\phi\in B_b(\mathbb{Y})$.
Then the $n$-th iterate of the transition operator $M_\tau$ satisfies
\begin{align*}
(M_\tau^{\,n}\phi)(y,w)
&=
\sum_{x^{(n)}=(x_1,\ldots,x_n)\in X_{w,n}}
\mathbf{P}_w([x_1,\ldots,x_n]) \\
&\quad \cdot
\int_{\Gamma_{x_1}}\!\cdots\!\int_{\Gamma_{x_n}}
\phi\bigl(\gamma_{n,1}(y),\, w x^{(n)}\bigr)
\, d\tau_{x_n}(\gamma_n)\cdots d\tau_{x_1}(\gamma_1).
\end{align*}
\end{lemma}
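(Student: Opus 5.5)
The plan is induction on $n$, using the definition of $M_\tau$ (Definition~\ref{def:transition-operator}) together with the multiplicative structure of the cylinder probabilities $\mathbf P_w([x_1,\ldots,x_n])$ supplied by Theorem~\ref{thm:RSCC-existence}.

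\textbf{Preliminaries.} Under Assumption~\ref{ass:CDIS}, part (i) of Theorem~\ref{thm:RSCC-existence} with $n=1$, $r=n$ gives, for $x^{(n)}\in X^n$,
\[
\mathbf P_w([x_1,\ldots,x_n])=P(w,\{x_1\})\,P(wx_1,\{x_2\})\cdots P(wx^{(n-1)},\{x_n\}).
\]
Two consequences will be used.
\begin{itemize}
\item[(a)] The cylinder probability factorizes as
\[
\mathbf P_w([x_1,\ldots,x_{n+1}])=\mathbf P_w([x_1,\ldots,x_n])\,P(wx^{(n)},\{x_{n+1}\}).
\]
\item[(b)] The same word shifted by one step satisfies
\[
\mathbf P_w([x_1,\ldots,x_{n+1}])=P(w,\{x_1\})\,\mathbf P_{wx_1}([x_2,\ldots,x_{n+1}]).
\]
\end{itemize}
Since every term in the sum carries its cylinder probability as a factor, summing over $X_{w,n}$ is the same as summing over all of $X^n$. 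This lets me freely add or drop words of probability zero.

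\textbf{Base case and induction step.} The base case $n=1$ is exactly Definition~\ref{def:transition-operator}. For the step, I write $M_\tau^{n+1}\phi=M_\tau(M_\tau^{n}\phi)$. The function $M_\tau^n\phi$ is bounded and measurable; measurability of the inner integrals follows from Fubini, since $(\gamma,y)\mapsto\gamma(y)$ is continuous on $\mathrm{CM}(Y)\times Y$. I then expand the outer $M_\tau$ at $(y,w)$:
\[
M_\tau^{n+1}\phi(y,w)=\sum_{x_1}P(w,\{x_1\})\int_{\Gamma_{x_1}}(M_\tau^n\phi)(\gamma_1(y),wx_1)\,d\tau_{x_1}(\gamma_1).
\]
Next I apply the induction hypothesis at the point $(\gamma_1(y),wx_1)$, writing its summation variable as $(x_2,\ldots,x_{n+1})$. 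This produces the factor $\mathbf P_{wx_1}([x_2,\ldots,x_{n+1}])$ together with the integrand $\phi(\gamma_{n+1,2}(\gamma_1(y)),\,(wx_1)(x_2,\ldots,x_{n+1}))$.

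\textbf{Recombining.} Three identities reassemble this into the formula at level $n+1$:
\[
(wx_1)(x_2,\ldots,x_{n+1})=wx^{(n+1)},\qquad \gamma_{n+1,2}\circ\gamma_1=\gamma_{n+1,1},
\]
together with (b) to merge the probability factors. Finally, Fubini/Tonelli for finite sums and bounded integrands lets me interchange the finite sum over $x_1$ with the integral over $\Gamma_{x_1}$.

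\textbf{Main obstacle.} There is no real analytic difficulty here. The one point that needs care is justifying (b), that is, the Markov-type restart of the RSCC from the state $wx_1$. I would read it off directly from the explicit product formula for $P_r$ in the Notation preceding Theorem~\ref{thm:RSCC-existence}.
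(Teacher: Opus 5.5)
Your proposal is correct and follows the paper's proof essentially step by step: induction via $M_\tau^{n+1}\phi=M_\tau(M_\tau^{n}\phi)$, applying the hypothesis at $(\gamma_1(y),wx_1)$, and recombining with $\mathbf P_w([x_1,\ldots,x_{n+1}])=P(w,\{x_1\})\,\mathbf P_{wx_1}([x_2,\ldots,x_{n+1}])$ and $\gamma_{n+1,2}\circ\gamma_1=\gamma_{n+1,1}$. Summing over all of $X^n$, since zero-probability words contribute nothing, is a clean substitute for the paper's remark that $(x_2,\ldots,x_{n+1})\in X_{wx_1,n}$ iff $x^{(n+1)}\in X_{w,n+1}$; your identity (a) is never actually used.
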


\begin{proof}
We argue by induction on $n$.
For $n=1$, the formula is exactly the definition of $M_\tau$.

Assume that the formula holds for some $n\in\mathbb{N}$.
Let $(y,w)\in\mathbb{Y}$ and $\phi\in B_b(\mathbb{Y})$.
By definition of the transition operator and the induction hypothesis, we have
\begin{align*}
(M_\tau^{\,n+1}\phi)(y,w)
&=
\sum_{x_1\in X_{w,1}}
\mathbf P_w([x_1])
\int_{\Gamma_{x_1}}
(M_\tau^{\,n}\phi)\bigl(\gamma_1(y),\,wx_1\bigr)
\,d\tau_{x_1}(\gamma_1) \\
&=
\sum_{x_1\in X_{w,1}}
\sum_{(x_2,\ldots,x_{n+1})\in X_{wx_1,n}}
\mathbf P_w([x_1])\,\mathbf P_{wx_1}([x_2,\ldots,x_{n+1}]) \\
&\quad \cdot
\int_{\Gamma_{x_1}}\!\cdots\!\int_{\Gamma_{x_{n+1}}}
\phi\bigl(\gamma_{n+1,2}(\gamma_1(y)),\,wx_1\cdots x_{n+1}\bigr)
\, d\tau_{x_{n+1}}(\gamma_{n+1})\cdots d\tau_{x_1}(\gamma_1).
\end{align*}

The condition $(x_2,\ldots,x_{n+1})\in X_{wx_1,n}$ is equivalent to
$x^{(n+1)}=(x_1,\ldots,x_{n+1})\in X_{w,n+1}$.
Moreover, by the consistency of the RSCC transition probabilities,
\[
\mathbf P_w([x_1,\ldots,x_{n+1}])
=
\mathbf P_w([x_1])\,\mathbf P_{wx_1}([x_2,\ldots,x_{n+1}]),
\]
and by definition of the composed maps,
\(
\gamma_{n+1,2}(\gamma_1(y))=\gamma_{n+1,1}(y).
\)
Substituting these identities into the previous expression yields the desired
formula for $n+1$.
\end{proof}

By expressing the relation obtained in the preceding lemma in terms of the
probability measure $\tilde{\tau}_{w}$ from Definition~\ref{def:Pwtilde}, the
following lemma follows.

\begin{lemma}[Integral representation]
\label{lem:Mn-integral-representation}
Let $(y,w)\in\mathbb{Y}$, $n\in\mathbb{N}$, and $\phi\in B_b(\mathbb{Y})$.
Then
\[
(M_\tau^{\,n}\phi)(y,w)
=
\int_{\Xi_w(S_{\tau})}
\phi\bigl(\gamma_{n,1}(y),\,w x^{(n)}\bigr)
\,d\tilde{\tau}_{w}(\xi),
\]
where $\xi=(\gamma_k,x_k)_{k\in\mathbb{N}}\in\Xi_w(S_{\tau})$ and
$x^{(n)}=(x_1,\ldots,x_n)$.
\end{lemma}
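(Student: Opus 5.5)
The plan is to reduce the claim to the iteration formula of Lemma~\ref{lem:Mn-formula}, read off as the integral of a cylinder function against $\tilde\tau_w$. Fix $(y,w)\in\mathbb{Y}$ and $n\in\mathbb{N}$, and consider the integrand
\[
\Phi_n(\xi):=\phi\bigl(\gamma_{n,1}(y),\,w x^{(n)}\bigr),\qquad \xi=(\gamma_k,x_k)_{k\in\mathbb{N}}\in\Xi_w(S_\tau).
\]
This function depends only on the first $n$ coordinates $(\gamma_1,x_1),\ldots,(\gamma_n,x_n)$. Since $X$ is countable, I will decompose $\Xi_w(S_\tau)$ into the disjoint pieces
\[
C_{x^{(n)}}:=C(\Gamma_{x_1},\ldots,\Gamma_{x_n};x^{(n)}),\qquad x^{(n)}\in X_{w,n}.
\]
First I check that these pieces cover the space. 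If $(x_k)\in\supp(\mathbf P_w)$, then every initial word has positive mass, because cylinders are open in the discrete product topology. Hence $x^{(n)}\in X_{w,n}$.

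Next I handle measurability of the integrand. On each piece $C_{x^{(n)}}$, the map $(\gamma_1,\ldots,\gamma_n)\mapsto\gamma_{n,1}(y)$ is continuous from $\Gamma_{x_1}\times\cdots\times\Gamma_{x_n}$ into $Y$. This follows from the joint continuity of evaluation and composition on $\mathrm{CM}(Y)$ with the uniform topology, $Y$ being compact. Composing with the bounded Borel function $\phi(\cdot,wx^{(n)})$, the state coordinate being fixed on the piece, gives a bounded measurable function of $(\gamma_1,\ldots,\gamma_n)$. So $\Phi_n$ is bounded and $\mathcal G_w$-measurable, and the integral is well defined.

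The central step is to identify the restriction of $\tilde\tau_w$ to $C_{x^{(n)}}$, pushed forward to the first $n$ map-coordinates, with the measure
\[
\mathbf P_w([x^{(n)}])\,\tau_{x_1}\otimes\cdots\otimes\tau_{x_n}.
\]
By Definition~\ref{def:Pwtilde}, the two measures agree on measurable rectangles $A_1\times\cdots\times A_n$. Rectangles form a $\pi$-system generating the product Borel $\sigma$-algebra, using separability of $\mathrm{CM}(Y)$ so that the Borel $\sigma$-algebra of the product equals the product of the Borel $\sigma$-algebras. Dynkin's $\pi$–$\lambda$ theorem then gives equality. Fubini's theorem (Tonelli applied to the bounded integrand, splitting into real and imaginary positive and negative parts) then converts the integral over $C_{x^{(n)}}$ into the iterated integral appearing in Lemma~\ref{lem:Mn-formula}.

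Finally I sum over the countably many $x^{(n)}\in X_{w,n}$. Dominated convergence applies since $|\Phi_n|\le\|\phi\|_\infty$ and $\tilde\tau_w$ is a probability measure, and the result is exactly the right-hand side of Lemma~\ref{lem:Mn-formula}, completing the argument. The main obstacle I expect is the measure-theoretic bookkeeping: the measurability of $\xi\mapsto\gamma_{n,1}(y)$, and the fact that the marginal of $\tilde\tau_w$ is the product measure. If needed, I will first prove the identity for $\phi=\mathbf 1_{B}$ with $B$ a Borel rectangle in $\mathbb{Y}$ and extend by a monotone class argument to all of $B_b(\mathbb{Y})$.
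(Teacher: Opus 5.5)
Your proposal is correct and follows the paper's route: reduce to the iteration formula of Lemma~\ref{lem:Mn-formula}, then observe that $\tilde\tau_w$ assigns the weight $\mathbf P_w([x^{(n)}])\prod_{k}\tau_{x_k}(A_k)$ to cylinders, and that the integrand depends only on the first $n$ coordinates. Where the paper asserts the final identification in one line, you supply the details: the partition of $\Xi_w(S_\tau)$ into the cylinders $C_{x^{(n)}}$, measurability via separability of $\mathrm{CM}(Y)$, the $\pi$--$\lambda$ identification of the marginal with the product measure, and Fubini.
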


\begin{proof}
By Lemma~\ref{lem:Mn-formula}, $(M_\tau^{\,n}\phi)(y,w)$ is given by a finite sum
over $x^{(n)}\in X_{w,n}$ of iterated integrals with respect to the product
measures $\tau_{x_1}\otimes\cdots\otimes\tau_{x_n}$, weighted by
$\mathbf P_w([x_1,\ldots,x_n])$.

By Definition~\ref{def:Pwtilde}, the path-space measure $\tilde{\tau}_w$ assigns
to each cylinder set
$C(A_1,\ldots,A_n;x^{(n)})$ the value
\[
\mathbf P_w([x_1,\ldots,x_n])\prod_{k=1}^n\tau_{x_k}(A_k).
\]
Since the function
\(
\xi\mapsto \phi(\gamma_{n,1}(y),\,w x^{(n)})
\)
depends only on $(x_1,\ldots,x_n)$ and $(\gamma_1,\ldots,\gamma_n)$, its integral
with respect to $\tilde{\tau}_w$ coincides with the sum and iterated integrals in
Lemma~\ref{lem:Mn-formula}. This proves the claim.
\end{proof}

\subsection{Cooperation Principle~I}
After establishing several lemmas concerning the notions introduced so far,
we proceed to prove the Cooperation Principle~I, which constitutes the main
theorem of this paper.

As generalizations of \cite[Lemma~3.8]{MR4002398} and
\cite[Corollary~3.9]{MR4002398}, the following lemma and corollary hold.

\begin{lemma}[Zero Julia probability implies pointwise equicontinuity]
\label{lem:zero-probability-implies-Fpt}
Let $(y,w)\in\mathbb{Y}$.
Assume that
\[
\tilde{\tau}_{w}\Bigl(
\bigl\{
\xi\in\Xi_{w}(S_\tau)
:\ y\in J_{\xi}
\bigr\}
\Bigr)=0.
\]
Then $(y,w)\in F_{\mathrm{pt}}^{0}(M_{\tau}^{*})$.
\end{lemma}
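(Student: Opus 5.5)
The plan is to verify condition (ii) of Lemma~\ref{lem:pt-equicontinuity-M}: for each $\phi\in C(\mathbb{Y})$, the family $\{M_\tau^{\,n}\phi\}_{n\ge0}$ should be equicontinuous at $(y,w)$. Once this holds, Lemma~\ref{lem:pt-equicontinuity-M} gives $(y,w)\in F^0_{\mathrm{pt}}(M_\tau^*)$. The main tool is the integral representation of Lemma~\ref{lem:Mn-integral-representation}, which writes $M_\tau^{\,n}\phi(y',w')$ as a $\tilde\tau_{w'}$-average of $\phi(\gamma_{n,1}(y'),w'x^{(n)})$.

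\emph{Step 1 (variation in $y$).} Fix $\phi$ and $\varepsilon>0$, and let $\omega_\phi$ be the modulus of uniform continuity of $\phi$ on the compact space $\mathbb{Y}$. For $\delta>0$ put
\[
E_\delta:=\Bigl\{\xi\in\Xi_w(S_\tau):\ \sup_{n}\operatorname{diam}\gamma_{n,1}(B(y,\delta))<\varepsilon\Bigr\}.
\]
These sets are measurable, since they depend on countably many coordinates and $Y$ is separable. They increase as $\delta\downarrow0$. By Lemma~\ref{lem:Rscc-basic-Jxi-properties}(ii), their union contains $\{\xi: y\in F_\xi\}$, which has full $\tilde\tau_w$-measure by hypothesis. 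Hence we may choose $\delta$ with $\tilde\tau_w(E_\delta)>1-\varepsilon$. Splitting the integral over $E_\delta$ and its complement then gives, for all $n$ and all $y'\in B(y,\delta)$,
\[
|M_\tau^{\,n}\phi(y',w)-M_\tau^{\,n}\phi(y,w)|\le\omega_\phi(\varepsilon)+2\varepsilon\|\phi\|_\infty .
\]
This is uniform in $n$.

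\emph{Step 2 (variation in $w$).} We must still compare $M_\tau^{\,n}\phi(y',w')$ with $M_\tau^{\,n}\phi(y',w)$ for $w'$ near $w$. Here the path measures $\tilde\tau_{w'}$ and $\tilde\tau_w$ differ only through the weights $\mathbf P_{w'}([x^{(n)}])$ versus $\mathbf P_{w}([x^{(n)}])$, while the map laws $\tau_{x_k}$ are the same. The states $w'x^{(n)}$ and $wx^{(n)}$ also differ. For a fixed finite horizon $N$, both differences are small. The cylinder weights are finite products of the continuous functions $v\mapsto P(v,\{x\})$ from Assumption~\ref{ass:standing-cooperation}, evaluated along the updated states. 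The states $w'x^{(n)}$ are close to $wx^{(n)}$ provided $u(\cdot,x)$ is continuous, which is the continuity already used implicitly in the proof of Lemma~\ref{lem:Mtau-is-Markov}. For $n\le N$ this gives equicontinuity in $w$ by a finite-sum estimate, using that $X$ is finite.

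\emph{Main obstacle.} The hard part is making the $w$-estimate uniform in $n$, and this is where the argument is most likely to fail. My first attempt would be to write $M_\tau^{\,n}=M_\tau^{\,n-N}M_\tau^{\,N}$ and exploit Step~1 applied at every point reached in the first $N$ steps, together with the Markov property from Theorem~\ref{thm:RSCC-existence}(iii), in order to couple the two processes started at $w$ and $w'$ up to time $N$. I would then try to show that after time $N$ the discrepancy in the state coordinate is absorbed by the uniform continuity of $\phi$ in the $W$-variable. If this coupling cannot be closed in general, I would fall back on deriving the required uniformity from a contraction-type property of the state chain $\zeta_n$ on the compact space $W$. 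Failing that, I would at least record that the argument closes exactly when the maps $v\mapsto vx^{(n)}$ are equicontinuous in $n$. Finally, the $y$- and $w$-estimates are combined by the triangle inequality.
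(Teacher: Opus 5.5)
You correctly identify the gap, but it cannot be closed: the lemma is false as stated, and the paper's proof has exactly the same hole.

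\textbf{Your Step~1 versus the paper.} Your Step~1 is the paper's entire proof. The paper chooses a neighbourhood $V$ of $y$ on which a set of paths of $\tilde\tau_w$-measure at least $1-\eta$ is uniformly equicontinuous. It then splits the integral from Lemma~\ref{lem:Mn-integral-representation} and gets $|M_\tau^n\phi(y',w)-M_\tau^n\phi(y,w)|<\varepsilon$ for $y'\in V$ and all $n$. From this it declares $\{M_\tau^n\phi\}$ equicontinuous at $(y,w)$, but it never varies the state. You are right that this is not enough. A neighbourhood of $(y,w)$ in $\mathbb{Y}=Y\times W$ contains points $(y',w')$ with $w'\neq w$ unless $w$ is isolated in $W$, which is the finite GDMS situation being generalized.

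\textbf{Step~2 cannot be made uniform in $n$.} The lemma fails under Assumption~\ref{ass:standing-cooperation}. Take $W=\mathbb R/\mathbb Z$, $X=\{x\}$, $P(v,\{x\})=1$, $u(v,x)=2v$ (even continuous), and $\tau_x=\delta_{\mathrm{id}_Y}$.
\begin{itemize}
\item Every $J_\xi$ is empty, so the hypothesis holds at every $(y,w)$.
\item However $M_\tau^n\phi(y,w)=\phi(y,2^nw)$. For $\phi(y,w)=\cos(2\pi w)$ this is $\cos(2\pi 2^n w)$, which runs through a full period on every interval of length $2^{-n}$.
\item So $\{M_\tau^n\phi\}$ is equicontinuous at no point, and $(y,w)\notin F^0_{\mathrm{pt}}(M_\tau^*)$ by Lemma~\ref{lem:pt-equicontinuity-M}.
\end{itemize}
The identity is open and all kernel Julia sets vanish here, so the same example also contradicts assertion~(1) of Theorem~\ref{thm:CP-Fatou}. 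This is exactly where your coupling idea breaks: after time $N$ the discrepancy in the state coordinate is amplified, not absorbed by the uniform continuity of $\phi$.

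\textbf{Your fallback criterion is also insufficient.} The cylinder weights can drift apart even when the states do not. Let $W=Y=[0,1]$, $X=\{x_1,x_2\}$, $u(v,x)=v$, $P(v,\{x_1\})=v$, $\Gamma_{x_1}=\{y\mapsto 0\}$ and $\Gamma_{x_2}=\{\mathrm{id}\}$, with Dirac $\tau_x$.
\begin{itemize}
\item All maps $v\mapsto vx^{(n)}$ are the identity, so the state maps are trivially equicontinuous in $n$.
\item The hypothesis holds at $(1,0)$, because the only admissible path from $0$ is the constant identity path.
\item Yet for $\phi(y,w)=y$ one has $M_\tau^n\phi(1,w)=(1-w)^n$. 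This equals $1$ at $w=0$ and tends to $0$ for each $w>0$.
\end{itemize}
What the conclusion actually needs is that the laws of $(\gamma_{n,1}(y'),w'x^{(n)})$ under $\tilde\tau_{w'}$ stay close to those under $\tilde\tau_w$, uniformly in $n$. A hypothesis about the single state $w$ cannot supply this.

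\textbf{What is actually proved.} Your Step~1, like the paper's argument, proves the fibrewise statement: $y'\mapsto M_\tau^n\phi(y',w)$ is equicontinuous at $y$, uniformly in $n$. This gives the lemma when $w$ is isolated in $W$ (for example, discrete $W$); otherwise an additional assumption is required.

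Your side remark is also correct: continuity of $u(\cdot,x)$ is used implicitly in Lemma~\ref{lem:Mtau-is-Markov}, but it is not among the standing assumptions.
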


\begin{proof}
By Lemma~\ref{lem:pt-equicontinuity-M}, it is sufficient to verify that for every
$\phi\in C(\mathbb{Y})$, the family $\{M_{\tau}^{n}\phi\}_{n\in\mathbb{N}}$
is equicontinuous at $(y,w)$.

Fix $\phi\in C(\mathbb{Y})$ and $\varepsilon>0$.
If $\phi\equiv 0$, there is nothing to prove; we therefore assume that
$\|\phi\|_{\infty}>0$ and define $\eta:=\varepsilon/(6\|\phi\|_{\infty})$.

Since $\mathbb{Y}$ is compact and $\phi$ is continuous, $\phi$ is uniformly
continuous. Consequently, there exists $\delta_{0}>0$ such that for all
$(z,v),(z',v)\in\mathbb{Y}$,
\begin{equation}\label{eq:UC-current}
d_{\mathbb{Y}}\bigl((z,v),(z',v)\bigr)<\delta_{0}
\quad\Longrightarrow\quad
|\phi(z,v)-\phi(z',v)|<\varepsilon/3.
\end{equation}

Let
\[
E:=\bigl\{\xi\in\Xi_{w}(S_\tau): y\notin J_{\xi}\bigr\}.
\]
Then $\tilde{\tau}_{w}(E)=1$.
Choose a countable neighborhood basis $(U_m)_{m\in\mathbb{N}}$ of $y$ in $Y$, and
define
\[
E_m
:=
\Bigl\{\xi\in E:
\{\gamma_{n,1}\}_{n\in\mathbb{N}}
\text{ is equicontinuous on } U_m\Bigr\}.
\]
By the definition of $J_{\xi}$, we have $E=\bigcup_{m\in\mathbb{N}}E_m$, and hence
\[
\sup_{m\in\mathbb{N}}\tilde{\tau}_{w}(E_m)=1.
\]
Choose $m\in\mathbb{N}$ such that $\tilde{\tau}_{w}(E_m)\ge 1-\eta$.
For $\ell\in\mathbb{N}$, set $V_\ell:=U_m\cap B_Y(y,1/\ell)$ and define
\[
E_{m,\ell}
:=
\Bigl\{\xi\in E_m:
d_Y\bigl(\gamma_{n,1}(y'),\gamma_{n,1}(y)\bigr)<\delta_{0}
\text{ for all } n\in\mathbb{N},\ y'\in V_\ell
\Bigr\}.
\]
By equicontinuity, $E_m=\bigcup_{\ell\in\mathbb{N}}E_{m,\ell}$; thus, we may choose
$\ell$ such that $\tilde{\tau}_{w}(E_{m,\ell})\ge 1-\eta$.
Set $V:=V_\ell$.

Let $y'\in V$ and $n\in\mathbb{N}$.
By Lemma~\ref{lem:Mn-integral-representation},
\begin{align*}
(M_\tau^{\,n}\phi)(y',w)-(M_\tau^{\,n}\phi)(y,w)
&=
\int_{\Xi_w(S_\tau)}
\Bigl(
\phi(\gamma_{n,1}(y'),w x^{(n)})
-
\phi(\gamma_{n,1}(y),w x^{(n)})
\Bigr)
\,d\tilde{\tau}_{w}(\xi).
\end{align*}
Taking absolute values and decomposing the integral, we obtain
\[
\bigl|(M_\tau^{\,n}\phi)(y',w)-(M_\tau^{\,n}\phi)(y,w)\bigr|
\le I_1+I_2,
\]
where
\[
I_1:=\int_{E_{m,\ell}}\Delta_n(\xi)\,d\tilde{\tau}_{w},
\qquad
I_2:=\int_{E_{m,\ell}^{c}}\Delta_n(\xi)\,d\tilde{\tau}_{w},
\]
and
\[
\Delta_n(\xi)
:=
\bigl|\phi(\gamma_{n,1}(y'),w x^{(n)})
-
\phi(\gamma_{n,1}(y),w x^{(n)})\bigr|.
\]

If $\xi\in E_{m,\ell}$, then
$d_Y(\gamma_{n,1}(y'),\gamma_{n,1}(y))<\delta_{0}$, and hence
\eqref{eq:UC-current} implies $\Delta_n(\xi)<\varepsilon/3$.
Therefore $I_1\le \varepsilon/3$.

Since $\Delta_n(\xi)\le 2\|\phi\|_{\infty}$ for all $\xi$, it follows that
\[
I_2\le 2\|\phi\|_{\infty}\,\tilde{\tau}_{w}(E_{m,\ell}^{c})
\le 2\|\phi\|_{\infty}\eta
= \varepsilon/3.
\]
Consequently,
\[
\bigl|(M_\tau^{\,n}\phi)(y',w)-(M_\tau^{\,n}\phi)(y,w)\bigr|
<\varepsilon
\qquad\text{for all } n\in\mathbb{N}.
\]

This shows that the family $\{M_\tau^{\,n}\phi\}_{n\in\mathbb{N}}$ is equicontinuous
at $(y,w)$. Since $\phi\in C(\mathbb{Y})$ was arbitrary, we conclude that
$(y,w)\in F_{\mathrm{pt}}^{0}(M_\tau^{*})$.
\end{proof}

\begin{corollary}[Full measure of the pointwise Fatou set]
\label{cor:lambda-Fpt-full-measure-tilde-tau}
Let $\lambda$ be a finite Borel measure on $\mathbb{Y}$.
Assume that for every $w\in W$,
\[
\lambda\bigl(\{\,y\in Y : y\in J_{\xi}\,\}\bigr)=0
\quad\text{for $\tilde{\tau}_{w}$--almost every }
\xi\in\Xi_{w}(S_\tau).
\]
Then
\[
\lambda\bigl(\mathbb{Y}\setminus F_{\mathrm{pt}}^{0}(M_{\tau}^{*})\bigr)=0.
\]
\end{corollary}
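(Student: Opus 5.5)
The plan is to reduce the statement pointwise to Lemma~\ref{lem:zero-probability-implies-Fpt} and then integrate using Fubini's theorem on the product $\Xi_w(S_\tau)\times Y$.

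Since $\lambda$ lives on $\mathbb{Y}=Y\times W$ while the hypothesis only involves sets in $Y$, I read the hypothesis fiberwise. Let $\nu:=(\mathrm{pr}_W)_*\lambda$ and disintegrate $\lambda=\int_W \lambda_w\otimes\delta_w\,d\nu(w)$, where $\{\lambda_w\}_{w\in W}$ are finite Borel measures on $Y$. Such a disintegration exists because $\mathbb{Y}$ is compact metrizable. The hypothesis is then taken to say that for every $w$ we have $\lambda_w(J_\xi)=0$ for $\tilde\tau_w$-a.e.\ $\xi$. (If $\lambda$ is already of product type $\lambda_Y\otimes\nu$, this is literally the hypothesis with $\lambda_w=\lambda_Y$.)

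\textbf{Step 1 (measurability).} I would show that $\mathcal J_w:=\{(\xi,y)\in\Xi_w(S_\tau)\times Y: y\in J_\xi\}$ is measurable for $\mathcal G_w\otimes\mathcal B(Y)$. Fix a countable dense set $D\subset Y$. Then $y\in F_\xi$ if and only if there is a rational $r>0$ and $q\in D$ with $d_Y(y,q)<r$ such that $\{\gamma_{n,1}\}_n$ is equicontinuous on $B(q,r)$. For the latter condition I use the reformulation in Lemma~\ref{lem:Rscc-basic-Jxi-properties}: for every rational $\epsilon$ there is a rational $\delta$ such that $\sup_n d_Y(\gamma_{n,1}(a),\gamma_{n,1}(b))\le\epsilon$ for all $a,b\in D\cap B(q,r)$ with $d_Y(a,b)<\delta$. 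By continuity, restricting to the countable set $D$ suffices. Each condition $d_Y(\gamma_{n,1}(a),\gamma_{n,1}(b))\le\epsilon$ is $\mathcal G_w$-measurable in $\xi$, since evaluation is continuous and $\gamma_{n,1}$ depends only on finitely many coordinates. Hence $\{(\xi,y): y\in F_\xi\}$ is a countable union of measurable rectangles intersected with measurable conditions, and is therefore measurable. I expect this to be the main technical obstacle. The care needed is in checking that equicontinuity on an open ball really can be tested on countable dense pairs, and that $\mathcal G_w$ contains the relevant evaluation events.

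\textbf{Step 2 (Fubini for fixed $w$).} By Tonelli,
\[
\int_Y \tilde\tau_w\bigl(\{\xi: y\in J_\xi\}\bigr)\,d\lambda_w(y)=\int_{\Xi_w(S_\tau)}\lambda_w(J_\xi)\,d\tilde\tau_w(\xi)=0 .
\]
Hence for $\lambda_w$-a.e.\ $y$ we have $\tilde\tau_w(\{\xi:y\in J_\xi\})=0$, and Lemma~\ref{lem:zero-probability-implies-Fpt} gives $(y,w)\in F^0_{\mathrm{pt}}(M_\tau^*)$. In other words, the $w$-section of $N:=\mathbb{Y}\setminus F^0_{\mathrm{pt}}(M_\tau^*)$ is $\lambda_w$-null.

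\textbf{Step 3 (integrating over $w$).} To integrate the sections, I need $N$ to be Borel. By Lemma~\ref{lem:pt-equicontinuity-M} and density of $(\phi_j)$ in $C(\mathbb{Y})$, using $\|M_\tau^n\|=1$ from Lemma~\ref{lem:Mtau-is-Markov}, $N$ is the set of points where some $\{M_\tau^n\phi_j\}_n$ fails to be equicontinuous. For each $j$ and rational $\epsilon$, the set of points at which the oscillation of the family exceeds $\epsilon$ on every neighbourhood is closed, so $N$ is $F_\sigma$ and in particular Borel. Then
\[
\lambda(N)=\int_W\lambda_w(N_w)\,d\nu(w)=0,
\]
which completes the proof.
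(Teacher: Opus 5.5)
Your proposal is correct and follows the paper's proof essentially step for step. Like the paper, you disintegrate $\lambda$ over $W$, read the hypothesis fiberwise as $\lambda_w(J_\xi)=0$ for $\tilde\tau_w$-a.e.\ $\xi$, apply Tonelli on $\Xi_w(S_\tau)\times Y$ together with Lemma~\ref{lem:zero-probability-implies-Fpt} to get $\lambda_w$-null sections, and integrate against $\nu$. Your arguments also correctly supply two measurability facts the paper uses without proof: the joint measurability of $\{(\xi,y): y\in J_\xi\}$ needed for Tonelli, and the Borelness of $F^0_{\mathrm{pt}}(M_\tau^*)$.
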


\begin{proof}
Set
\[
A:=\mathbb{Y}\setminus F_{\mathrm{pt}}^{0}(M_{\tau}^{*}).
\]
Since $F_{\mathrm{pt}}^{0}(M_{\tau}^{*})$ is a Borel subset of $\mathbb{Y}$,
the set $A$ is also a Borel subset of $\mathbb{Y}$.

Let $\pi_W:\mathbb{Y}=Y\times W\to W$ denote the canonical projection.
Since $\lambda$ is a finite Borel measure on the compact metric space
$\mathbb{Y}$, there exists a measurable family
$\{\lambda_w\}_{w\in W}$ of finite Borel measures on $Y$ and a finite Borel
measure $\nu$ on $W$ such that
\[
\lambda(E)
=
\int_W \lambda_w(E_w)\,d\nu(w)
\quad\text{for all }E\in\mathcal B(\mathbb{Y}),
\]
where
\(
E_w:=\{y\in Y:(y,w)\in E\}
\).
In particular,
\[
\lambda(A)=\int_W \lambda_w(A_w)\,d\nu(w),
\qquad
A_w:=\{y\in Y:(y,w)\in A\}.
\]

Fix $w\in W$.
By the contraposition of Lemma~\ref{lem:zero-probability-implies-Fpt}, for every
$y\in Y$ we have
\[
(y,w)\notin F_{\mathrm{pt}}^{0}(M_{\tau}^{*})
\quad\Longrightarrow\quad
\tilde{\tau}_{w}\bigl(\{\xi\in\Xi_{w}(S_\tau): y\in J_{\xi}\}\bigr)>0.
\]
Consequently,
\begin{equation}\label{eq:Aw-included-tilde-tau-compactW}
A_w
\subset
\Bigl\{
y\in Y:
\tilde{\tau}_{w}\bigl(\{\xi\in\Xi_{w}(S_\tau): y\in J_{\xi}\}\bigr)>0
\Bigr\}.
\end{equation}

Define
\[
B_w:=\{(y,\xi)\in Y\times\Xi_w(S_\tau): y\in J_\xi\}.
\]
By Tonelli’s theorem and the assumption of the corollary,
\begin{align*}
0
&=
\int_{\Xi_w(S_\tau)}
\lambda_w\bigl(\{y\in Y:(y,\xi)\in B_w\}\bigr)\,
d\tilde{\tau}_w(\xi) \\
&=
\int_Y
\tilde{\tau}_w\bigl(\{\xi\in\Xi_w(S_\tau): y\in J_\xi\}\bigr)\,
d\lambda_w(y).
\end{align*}
Since the integrand is nonnegative, it follows that
\[
\tilde{\tau}_w\bigl(\{\xi\in\Xi_w(S_\tau): y\in J_\xi\}\bigr)=0
\quad
\text{for $\lambda_w$--almost every }y\in Y.
\]
Together with \eqref{eq:Aw-included-tilde-tau-compactW}, this implies
\[
\lambda_w(A_w)=0.
\]

Since this holds for $\nu$--almost every $w\in W$, we conclude that
\[
\lambda(A)
=
\int_W \lambda_w(A_w)\,d\nu(w)
=0.
\]
This completes the proof.
\end{proof}

As a generalization of \cite[Lemma~3.10]{MR4002398}, the following lemma holds.
For a discussion of the modifications from \cite[Lemma~3.10]{MR4002398}, see
Remark~\ref{rem:key-lemma}.
Examples showing that the conclusion of the lemma may fail without these
modifications are given in Example~\ref{ex:fattening-indispensable}.

\begin{lemma}[Almost sure approach to the thickened kernel set]
\label{lem:kernel-approach-RSCC}
Let $(U_w)_{w\in W}$ be a forward $S$-invariant family
of nonempty open subsets of $Y$.
For each $w\in W$, define
\[
L_{\ker,w}(S_\tau)
:=
\bigcap_{v\in \operatorname{Reach}(w)}
\ \bigcap_{\gamma\in H_w^{v}(S_\tau)}
\gamma^{-1}\!\bigl(Y\setminus U_{v}\bigr).
\]

For $\varepsilon>0$ and $w\in W$, set $B(w,\varepsilon):=\{v\in W : d_W(w,v)<\varepsilon\}$ and define
\[
\widehat L_{\ker ,w}^{\varepsilon}
:=
\bigcup_{v\in B(w,\varepsilon)\cap \operatorname{Reach}(w)}
L_{\ker,v}(S_\tau)
\subset Y.
\]

For $y\in Y$ and $w\in W$, define
\[
E(y,w)
:=
\Bigl\{
\xi=(\gamma_n,x_n)_{n\in \mathbb{N}}\in\Xi_w(S_\tau)
:\ \gamma_{n,1}(y)\in U_{w x^{(n)}} \text{ for all } n\in \mathbb{N}
\Bigr\}.
\]

Then for every $\varepsilon>0$, $y\in Y$, $w\in W$, and
for $\tilde{\tau}_w$-almost every $\xi=(\gamma_n,x_n)\in E(y,w)$,
there exists $N\in \mathbb{N}$ such that
\[
d_Y\bigl(
\gamma_{n,1}(y),\,
\widehat L_{\ker ,w x^{(n)}}^{\varepsilon}
\bigr)
<\varepsilon
\qquad\text{for all } n\ge N .
\]
Here we define $d_Y(a,\emptyset):=\infty$ for all $a\in Y$.
\end{lemma}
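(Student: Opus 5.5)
We adapt the martingale/Lévy 0--1 argument behind \cite[Lemma~3.10]{MR4002398} to the RSCC setting. Throughout, fix $\varepsilon>0$, $y\in Y$ and $w\in W$, write $y_n:=\gamma_{n,1}(y)$ and $w_n:=wx^{(n)}$ for $\xi=(\gamma_n,x_n)\in\Xi_w(S_\tau)$, and set $y_0:=y$, $w_0:=w$.

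\textbf{Step 1: a uniform escape estimate.} The first step is a compactness argument giving a uniform escape probability from points that are $\varepsilon$-far from the thickened kernel set. Let
\[
K:=\bigl\{(z,v)\in Y\times W : d_Y\bigl(z,\widehat L^{\varepsilon}_{\ker,v}\bigr)\ge\varepsilon\bigr\}.
\]
Suppose $(z,v)\in K$. Then $z\notin L_{\ker,v}(S_\tau)$, because $v\in B(v,\varepsilon)\cap\operatorname{Reach}(v)$ whenever $v$ is reachable from itself; in general one should include $v$ itself in the thickening, and the proof must handle this point. Hence there exist an admissible word $x^{(k)}\in X_{v,k}$ and $\gamma_j\in\Gamma_{x_j}=\supp\tau_{x_j}$ with $\gamma_{k,1}(z)\notin Y\setminus U_{vx^{(k)}}$, that is, $\gamma_{k,1}(z)\in U_{vx^{(k)}}$.

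This gives escape from $Y\setminus U$, not entry into it. The relevant ``bad'' event is therefore leaving $U$: on $E(y,w)$ the orbit stays in $U$ forever. So $L_{\ker}$ is exactly the set of points that are forced out of $U$ along every admissible path. If $z\notin L_{\ker,v}$, some admissible finite composition sends $z$ \emph{into} $Y\setminus U$ at some time. Since $U$ is open, $Y\setminus U$ is closed, and we need an open condition. I would therefore use a slightly smaller target: after leaving $U$ the path is outside $E$. The escape event at $z$ means that some $\gamma_{k,1}(z)\notin U_{vx^{(k)}}$ on a set of positive $\tilde\tau_v$ measure, which requires an open neighbourhood of $(\gamma_1,\dots,\gamma_k)$ in the supports.

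The $\varepsilon$-thickening in both $Y$ and $W$ is what makes this uniform. For $(z,v)\in K$, every point $(z',v')$ with $d_Y(z,z')<\varepsilon$ and $d_W(v,v')<\varepsilon$ is still not in the kernel at nearby reachable states. Combining this with the continuity of $v\mapsto P(v,\{x\})$ (Assumption~\ref{ass:standing-cooperation}), the finiteness of $X$, the compactness of $Y\times W$ and the fact that $\Gamma_x=\supp\tau_x$, I aim to show that
\[
\inf_{(z,v)\in K}\tilde\tau_v\bigl(\Xi_v(S_\tau)\setminus E(z,v)\bigr)\ \ge\ c>0,
\qquad
\text{with escape within at most } k_0 \text{ steps}.
\]

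\textbf{Step 2: the martingale argument.} Let $\mathcal F_n$ be the $\sigma$-algebra generated by the first $n$ coordinates. By the Markov-type property (Theorem~\ref{thm:RSCC-existence}\,(iii), lifted to $\tilde\tau_w$ through Definition~\ref{def:Pwtilde}), the conditional probability satisfies
\[
\tilde\tau_w\bigl(E(y,w)\,\big|\,\mathcal F_n\bigr)
=\mathbf 1_{\{y_j\in U_{w_j},\ j\le n\}}\;\tilde\tau_{w_n}\bigl(E(y_n,w_n)\bigr).
\]
By Lévy's upward theorem this converges almost surely to $\mathbf 1_{E(y,w)}$. Hence for almost every $\xi\in E(y,w)$ we get $\tilde\tau_{w_n}(E(y_n,w_n))\to1$. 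If $(y_n,w_n)\in K$ for infinitely many $n$, Step 1 gives $\tilde\tau_{w_n}(E(y_n,w_n))\le 1-c$ infinitely often, which is a contradiction. So eventually $(y_n,w_n)\notin K$, and this is the claim.

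\textbf{Main obstacle.} The hard part is Step 1: obtaining the uniform lower bound $c$ when $W$ is a general compact metric space. Kernel membership is not semicontinuous in $w$, as the jump phenomenon of Example~\ref{ex:kernel-julia-jump} shows, and admissibility of words can vanish in the limit. I would first try to prove the bound by contradiction, taking a sequence $(z_j,v_j)\in K$ with escape probabilities tending to $0$ and passing to a limit $(z,v)$. The thickening over $B(v,\varepsilon)\cap\operatorname{Reach}$ should then ensure that the limit is still outside the kernel with a witnessing word whose probability is bounded below near $v$, by continuity of $P$. The measurability issues, namely the measurability of $E(y,w)$ and of the conditional-probability identity, I expect to be routine.
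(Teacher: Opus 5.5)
\textbf{There is a genuine gap: your Step~1 reverses the sign, and the bound it targets is false.}

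From $z\notin L_{\ker,v}(S_\tau)$ you correctly obtain an admissible composition with $\gamma_{k,1}(z)\in U_{vx^{(k)}}$, that is, a way of \emph{entering} $U$. Your next claim, that some admissible composition sends $z$ into $Y\setminus U$, does not follow, and entering $U$ never takes a path out of $E(z,v)$. In fact, if $z\in U_v$, then forward $S$-invariance applied along $(x_n)\in\supp(\mathbf P_v)$ gives $\gamma_{n,1}(z)\in U_{vx^{(n)}}$ for every $n$. Hence $E(z,v)=\Xi_v(S_\tau)$, and your $c$ must be $0$ as soon as $K$ meets $\{(z,v):z\in U_v\}$.

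Take $U_w:=Y$ for all $w$. Then $L_{\ker,w}(S_\tau)=\emptyset$, $K=Y\times W$ and $\tilde\tau_w(E(y,w))=1$, while the conclusion would require $d_Y(\cdot,\emptyset)<\varepsilon$. So, with $E(y,w)$ defined by $\gamma_{n,1}(y)\in U_{wx^{(n)}}$, the statement is false and no proof can exist. Since $L_{\ker,v}(S_\tau)$ is the set of points \emph{never} sent into $U$, the matching event is $\gamma_{n,1}(y)\in Y\setminus U_{wx^{(n)}}$ for all $n$. This is exactly how the lemma is used in the proof of Theorem~\ref{thm:CP-Fatou}, with $U_w=F_w(S_\tau)$. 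That mismatch is what you needed to spot, rather than looking for a uniform \emph{exit} bound.

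\textbf{With the corrected event, Step~2 is fine, but Step~1 still cannot be completed.} Your Step~2 then works verbatim, with indicator $\mathbf 1_{\{y_j\notin U_{w_j},\,j\le n\}}$. L\'evy's upward theorem is a tidy substitute for the paper's Borel--Cantelli count over a finite cover of the compact set $\mathbb K$ with common length $\ell$ and uniform $\delta$. Both routes, however, rest on a uniform lower bound for the probability of \emph{entering} $U$ within $k_0$ steps from points of $K$. Your compactness argument cannot produce it. $K$ need not be closed, and a limit $(z,v)$ of a minimizing sequence can satisfy $z\in L_{\ker,v}(S_\tau)$ at a state $v$ that is a limit of, but not a member of, the sets $\operatorname{Reach}(v_j)$. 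The thickening over $B(v_j,\varepsilon)\cap\operatorname{Reach}(v_j)$ never sees such a state.

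Here is a counterexample under all standing assumptions (even $u$ is continuous):
\begin{itemize}
\item $Y=[0,1]$, $W=\{0\}\cup\{1/n:n\in\mathbb N\}\cup\{2\}$, $X=\{a,b\}$.
\item $u(1/n,a)=1/(n+1)$, $u(0,a)=0$, and $u(v,b)=u(2,a)=2$ for all $v$.
\item $P(1/n,\{b\})=2^{-n}$ and $P(0,\{b\})=P(2,\{b\})=0$.
\item $\tau_a=\delta_{\mathrm{id}}$ and $\tau_b=\delta_c$ with $c\equiv 1/2$.
\item $U_0=U_{1/n}=(3/4,1]$ and $U_2=(1/4,3/4)$.
\end{itemize}
The family $(U_v)$ is forward invariant. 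We have $L_{\ker,1/n}(S_\tau)=\emptyset$ and $0\notin\operatorname{Reach}(1/n)$, so $\widehat L^{\varepsilon}_{\ker,1/n}=\emptyset$ for $\varepsilon<1$. Yet from $(y,w)=(0,1)$ the path with $x_n=a$ and $\gamma_n=\mathrm{id}$ keeps the orbit at $0\notin U_{1/(n+1)}$. This happens with probability $\prod_{k\ge1}(1-2^{-k})>0$, while $0\in L_{\ker,0}(S_\tau)=[0,3/4]$; this is the emptiness-jump mechanism.

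So even the corrected lemma needs more. For example, one could thicken over all states in $B(wx^{(n)},\varepsilon)$ and assume continuity of $u$ and openness of $\{(y,v):y\in U_v\}$. Under such hypotheses your compactness-plus-L\'evy scheme can be carried out. The paper's own proof has the same hole. For $(y_0,v_0)\in\mathbb K$ it infers $y_0\notin L_{\ker,v_0}(S_\tau)$, which needs $v_0\in\operatorname{Reach}(w)$. In the example above, with $w=1$, we have $(0,0)\in\mathbb K$ but $0\in L_{\ker,0}(S_\tau)$.
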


\begin{proof}
Throughout this proof, we fix a metric on $\mathbb{Y}$ given by
\[
d_{\mathbb{Y}}\bigl((y_1,v_1),(y_2,v_2)\bigr)
:=\max\{d_Y(y_1,y_2),\,d_W(v_1,v_2)\}.
\]

For $\xi:=(\gamma_n,x_n)_{n\in \mathbb{N}}\in\Xi_w(S_\tau)$ and $\mathbf{z}:=(y,w)\in\mathbb{Y}$, we write
\(
\xi_{n,1}(\mathbf{z}):=(\gamma_{n,1}(y),\,w x^{(n)}).
\)
For a subset $A\subset\mathbb{Y}$, we denote by
\[
B(A,\varepsilon)
:=\{\mathbf{z}\in\mathbb{Y} : d_{\mathbb{Y}}(\mathbf{z},A)<\varepsilon\}
\]
its open $\varepsilon$-neighborhood with respect to $d_{\mathbb{Y}}$.

Fix $\varepsilon>0$ and a point $\mathbf{z}:=(y,w)\in\mathbb{Y}$.

We define
\[
\mathbb{U}
:=\bigcup_{v\in \operatorname{Reach}(w)} U_v\times B(v,\varepsilon),
\qquad
\mathbb{L}_{\ker}^{\varepsilon}
:=\bigcup_{v\in \operatorname{Reach}(w)} L_{\ker,v}(S_\tau)\times B(v,\varepsilon)
\subset \mathbb{Y}.
\]

For each $n\in\mathbb{N}$, set
\begin{align*}
A(\varepsilon,n)
&:=
\Bigl\{\xi\in E(\mathbf{z}):
\xi_{n,1}(\mathbf{z})\notin \mathbb{U}\cup B(\mathbb{L}_{\ker}^{\varepsilon},\varepsilon)\Bigr\},\\
C(\varepsilon)
&:=
\Bigl\{\xi\in E(\mathbf{z}):
\text{there exists $N\in \mathbb{N}$ such that }
\xi_{n,1}(\mathbf{z})\in B(\mathbb{L}_{\ker}^{\varepsilon},\varepsilon)
\text{ for all } n\ge N\Bigr\}.
\end{align*}

We claim that $\tilde{\tau}_w\bigl(E(\mathbf{z})\setminus C(\varepsilon)\bigr)=0$.
Since
\(
E(y,w)\setminus C(\varepsilon)=\limsup_{n\to\infty}A(\varepsilon,n),
\)
it suffices by the Borel--Cantelli lemma to show
\[
\sum_{n=1}^{\infty}\tilde{\tau}_w\bigl(A(\varepsilon,n)\bigr)<\infty.
\]

Set
\[
\mathbb{K}:=\mathbb{Y}\setminus\bigl(\mathbb{U}\cup B(\mathbb{L}_{\ker}^{\varepsilon},\varepsilon)\bigr).
\]
Then $\mathbb{K}$ is compact.
Fix $\mathbf{z}_0=(y_0,v_0)\in\mathbb{K}$.
By $\mathbf{z}_0\notin \mathbb{U}$ we have $y_0\notin U_{v_0}$.
Moreover, $\mathbf{z}_0\notin B(\mathbb{L}_{\ker}^{\varepsilon},\varepsilon)$ implies
$\mathbf{z}_0\notin L_{\ker,v_0}(S_\tau)\times B(v_0,\varepsilon)$, hence $y_0\notin L_{\ker,v_0}(S_\tau)$.
By the definition of $L_{\ker,v_0}(S_\tau)$, there exist
a state $v_1\in\operatorname{Reach}(v_0)$ and a map $\gamma\in H_{v_0}^{v_1}(S_\tau)$
such that $\gamma(y_0)\in U_{v_1}$.
Choose an admissible word $x^{(\ell)}=(x_1,\dots,x_\ell)\in X_{v_{0},\ell}$ with
$u^{(\ell)}(v_0,x^{(\ell)})=v_1$
and choose maps $\eta_j\in\Gamma_{x_j}$ so that $\gamma=\eta_\ell\circ\cdots\circ\eta_1$.

Since $U_{v_1}$ is open and the map
\[
(\eta_1,\dots,\eta_\ell,\mathbf{z})\longmapsto (\eta_\ell\circ\cdots\circ\eta_1)(\mathbf{z})
\]
is continuous on $\bigl(\prod_{j=1}^\ell \Gamma_{x_j}\bigr)\times\mathbb{Y}$,
there exist an open neighborhood $W(\mathbf{z}_0)\subset\mathbb{Y}$ of $\mathbf{z}_0$
and open neighborhoods $O_j(\mathbf{z}_0)\subset\Gamma_{x_j}$ of $\eta_j$
such that for every $(\eta_1',\dots,\eta_\ell')\in\prod_{j=1}^\ell O_j(\mathbf{z}_0)$ we have
\[
(\eta_\ell'\circ\cdots\circ\eta_1')\bigl(W(\mathbf{z}_0)\bigr)\subset \mathbb{U}.
\]
Cover the compact set $\mathbb{K}$ by finitely many such neighborhoods
$W_q:=W(\mathbf{z}_q)$ ($q=1,\dots,p$), with associated lengths $\ell_q$.
Using that $(U_w)_{w\in W}$ is forward $S$-invariant, we may concatenate words if necessary
and assume that all lengths coincide; thus we fix $\ell\in\mathbb{N}$ so that for each $q$
there exist a word $x^{(\ell)}(q)=(x_1(q),\dots,x_\ell(q))$ and open sets
$O_{j,q}\subset\Gamma_{x_j(q)}$ with the property that
every choice of maps in $O_{1,q},\dots,O_{\ell,q}$ sends $W_q$ into $\mathbb{U}$.

For each $q=1,\dots,p$, let $\mathbf{z}_q=(y_q,v_q)$ and define the cylinder set
\[
\tilde{O}_q
:=
\Bigl\{\xi=(\gamma_n,x_n)_{n\in\mathbb{N}}\in \Xi_{v_q}(S_\tau):
x_j=x_j(q),\ \gamma_j\in O_{j,q}\ (1\le j\le \ell)\Bigr\}.
\]
Then $\tilde{O}_q$ is open in $\Xi_{v_q}(S_\tau)$ and satisfies
$\tilde{\tau}_{v_q}(\tilde{O}_q)>0$.

By compactness of $W$ and finiteness of $X$, together with the standing assumption
that the map $v\mapsto P(v,\{x\})$ is continuous for each $x\in X$,
there exists a constant $\delta>0$, independent of $q$, such that the following holds.
Whenever $\xi_{n,1}(\mathbf{z})\in W_q$ for some $q$,
the conditional probability (with respect to $\tilde{\tau}_w$) that, during the next
$\ell$ steps, the index sequence coincides with the word $x^{(\ell)}(q)$ and the
corresponding maps are chosen from $O_{1,q},\dots,O_{\ell,q}$ is bounded below by $\delta$.
Equivalently, starting from any point of\/ $\mathbb{K}$, the orbit enters
$\mathbb{U}$ within the next $\ell$ steps with conditional probability at least $\delta$.

For $k\ge 0$ and $r\in\{0,\dots,\ell-1\}$, define
\begin{align*}
I(k,r)&:=\{\xi\in \Xi_w(S_\tau):\ \xi_{k\ell+r,1}(\mathbf{z})\in\mathbb{K}\},\\
H(k,r)&:=\{\xi\in I(k,r):\ \xi_{(k+1)\ell+r,1}(\mathbf{z})\in\mathbb{U}\}.
\end{align*}
By construction, the sets $H(k,r)$ are pairwise disjoint for distinct values of $k$, and moreover
\[
\tilde{\tau}_w\bigl(H(k,r)\bigr)\ge \delta\,\tilde{\tau}_w\bigl(I(k,r)\bigr).
\]
Consequently,
\[
1\ge \tilde{\tau}_w\Bigl(\bigcup_{k\ge0}H(k,r)\Bigr)
=\sum_{k=0}^{\infty}\tilde{\tau}_w\bigl(H(k,r)\bigr)
\ge \delta\sum_{k=0}^{\infty}\tilde{\tau}_w\bigl(I(k,r)\bigr),
\]
and hence
\[
\sum_{k=0}^{\infty}\tilde{\tau}_w\bigl(I(k,r)\bigr)\le \frac{1}{\delta}.
\]
Since $A(\varepsilon,n)\subset\{\xi:\xi_{n,1}(\mathbf{z})\in\mathbb{K}\}$ for each $n$,
it follows that
\[
\sum_{n=1}^{\infty}\tilde{\tau}_w\bigl(A(\varepsilon,n)\bigr)
\le \sum_{r=0}^{\ell-1}\sum_{k=0}^{\infty}\tilde{\tau}_w\bigl(I(k,r)\bigr)
\le \frac{\ell}{\delta}<\infty.
\]
Therefore,
\[
\tilde{\tau}_w\bigl(E(\mathbf{z})\setminus C(\varepsilon)\bigr)=0.
\]

Finally, let $\xi\in C(\varepsilon)$.
By definition of $C(\varepsilon)$, we have
$\xi_{n,1}(\mathbf{z})\in B(\mathbb{L}_{\ker}^{\varepsilon},\varepsilon)$
for all sufficiently large $n$.
Accordingly, for each such $n$ there exists a state
$v_n\in \operatorname{Reach}(w)$ satisfying
\[
d_W(v_n,wx^{(n)})<\varepsilon
\quad\text{and}\quad
d_Y(\gamma_{n,1}(y),L_{\ker,v_n}(S_\tau))<\varepsilon.
\]
Since $v_n\in B(wx^{(n)},\varepsilon)\cap \operatorname{Reach}(w)$, it follows that
$L_{\ker,v_n}(S_\tau)\subset \widehat L_{\ker,wx^{(n)}}^{\varepsilon}$, and hence
\[
d_Y\!\left(\gamma_{n,1}(y),\,\widehat L_{\ker,wx^{(n)}}^{\varepsilon}\right)<\varepsilon
\quad\text{for all sufficiently large }n.
\]
This completes the proof (with the convention $d_Y(a,\emptyset):=\infty$).
\end{proof}

\begin{remark}\label{rem:key-lemma}
Lemma~\ref{lem:kernel-approach-RSCC} generalizes
\cite[Lemma~3.10]{MR4002398}; however, there is an important distinction.
In the present setting, we introduce the set $\widehat L_{\ker ,w}^{\varepsilon}$, which is obtained from $L_{\ker,w}(S_\tau)$ by an $\varepsilon$-thickening with respect to the metric on the state space $W$.
The framework of \cite{MR4002398} corresponds to the special case in which the state space $W$ of the RSCC is a finite set endowed with the discrete metric, and in that situation the two sets coincide.
By contrast, when $W$ is not necessarily finite, convergence to the unfattened set $L_{\ker,w}(S_\tau)$ may fail.
This phenomenon is illustrated in the following example, Example~\ref{ex:fattening-indispensable}.
\end{remark}

\begin{example}[Failure of almost sure kernel approach without thickening]
\label{ex:fattening-indispensable}
Define the state space by
\[
W:=\{0\}\cup\{1/n:n\in\mathbb{N}\}\subset\mathbb R,\qquad d_W(w,w'):=|w-w'|,
\]
which is compact and has an accumulation point at $0$, and set $X:=\{x_1,x_2\}$.
Define $u:W\times X\to W$ by
\[
u(1/n,x_1)=1/(n+1),\quad u(1/n,x_2)=0\ (n\in\mathbb{N}),\quad u(0,x_1)=0,
\]
and specify the transition probabilities by
\[
P(1/n,\{x_1\})=1-2^{-n},\quad P(1/n,\{x_2\})=2^{-n}\ (n\in\mathbb{N}),\quad
P(0,\{x_1\})=1.
\]
Then $\operatorname{Reach}(0)=\{0\}$ and
$\operatorname{Reach}(1/n)=\{0,1/(n+1),1/(n+2),\dots\}$.

Let $Y:=[0,1]$ endowed with the Euclidean metric, and define continuous maps
$f_{x_1}(y):=\tfrac12 y$ and $f_{x_2}(y):=\tfrac18$.
For $i=1,2$ set $\tau_{x_i}:=\delta_{f_{x_i}}$ and
$\Gamma_{x_i}=\{f_{x_i}\}$, whereby each admissible composition is uniquely
determined by the index sequence.

Define nonempty open sets
\[
U_0:=(0,\tfrac14),\qquad
U_{1/n}:=(0,2^{-n-2})\ (n\in\mathbb{N}).
\]
Then $(U_w)_{w\in W}$ is forward $S$-invariant, since
$f_{x_1}(U_{1/n})\subset U_{1/(n+1)}$, $f_{x_2}(U_{1/n})=\{\tfrac18\}\subset U_0$,
and $f_{x_1}(U_0)\subset U_0$, with all corresponding indices having positive
transition probability.

Let $L_{\ker,w}(S_\tau)$ be defined as in Lemma~\ref{lem:kernel-approach-RSCC}.
It follows that
\[
L_{\ker,0}(S_\tau)=\{0\},\qquad
L_{\ker,1/n}(S_\tau)=\emptyset\quad(n\in\mathbb{N}).
\]
Indeed,
\[
L_{\ker,0}(S_\tau)
=\bigcap_{k\ge0}(f_{x_1}^k)^{-1}(Y\setminus U_0),
\]
and since $f_{x_1}^k(y)\to0\in U_0$ for $y>0$ whereas $f_{x_1}^k(0)=0\notin U_0$,
the first equality is established.
For $w=1/n$, the admissible word $(x_2)$ satisfies $u(w,x_2)=0\in\operatorname{Reach}(w)$,
while $f_{x_2}(y)=1/8\in U_0$ for all $y\in Y$, which implies
$L_{\ker,1/n}(S_\tau)=\emptyset$.

Fix $\varepsilon>0$ and choose $N$ such that $1/N<\varepsilon$.
Then, for all $n\ge N$, one has $0\in B(1/n,\varepsilon)\cap\operatorname{Reach}(1/n)$ and hence
\[
\widehat L_{\ker,1/n}^{\varepsilon}
=\bigcup_{v\in B(1/n,\varepsilon)\cap\operatorname{Reach}(1/n)}L_{\ker,v}(S_\tau)
=L_{\ker,0}(S_\tau)=\{0\}.
\]

Fix $y_0\in U_1=(0,2^{-3})$ and start from $w=1$.
On the event $(x_n)_{n\in\mathbb{N}}=(x_1,x_1,\ldots)$, which has positive probability
$\prod_{k=1}^\infty(1-2^{-k})>0$, one has
$w_k=1/(k+1)$ and $y_k=f_{x_1}^k(y_0)=2^{-k}y_0\to0$.
Consequently,
\[
d_Y(y_k,\widehat L_{\ker,w_k}^{\varepsilon})=|y_k|\to0,
\qquad
d_Y(y_k,L_{\ker,w_k}(S_\tau))=\infty,
\]
since $L_{\ker,w_k}(S_\tau)=\emptyset$ for all $k$.
\end{example}

Using the lemmas established above, we now prove the following theorem.
This result generalizes \cite[Theorem~3.14]{MR2747724} and
\cite[Proposition~3.11]{MR4002398}.

\begin{theorem}[Cooperation Principle~I]
\label{thm:CP-Fatou}
Let $\lambda$ be a finite Borel measure on $Y$.
Assume that the following conditions hold:
\begin{enumerate}[label=\textup{(\roman*)}]
\item\label{itm:CPFatou-kernel}
$J_{\ker,w}(S_\tau)=\emptyset$ for all $w\in W$;
\item\label{itm:CPFatou-OCM}
$\Gamma_x\subset \mathrm{OCM}(Y)$ for all $x\in X$.
\end{enumerate}
Then the following assertions hold.
\begin{enumerate}[label=\textup{(\arabic*)}]
\item
$F_{\mathrm{meas}}(M_\tau^{*})=\mathfrak{M}_1(\mathbb{Y})$.

\item
For every $w \in W$ and for $\tilde{\tau}_w$-almost every
$\xi=(\gamma_n,x_n)\in \Xi_w(S_\tau)$, we have
\(
\lambda(J_\xi)=0.
\)
\end{enumerate}
\end{theorem}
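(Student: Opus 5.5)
The strategy is to reduce both assertions to a single almost-sure statement: for every $(y,w)\in\mathbb{Y}$,
\[
\tilde\tau_w\bigl(\{\xi\in\Xi_w(S_\tau): y\in J_\xi\}\bigr)=0 .
\]
Once this holds, Lemma~\ref{lem:zero-probability-implies-Fpt} gives $(y,w)\in F^0_{\mathrm{pt}}(M_\tau^*)$ for every point. Hence $F^0_{\mathrm{pt}}(M_\tau^*)=\mathbb{Y}$, and Lemma~\ref{lem:Fmeas-Fpt-equivalence} yields assertion~(1). For assertion~(2), fix $w$ and apply Tonelli's theorem to the set $\{(y,\xi): y\in J_\xi\}\subset Y\times\Xi_w(S_\tau)$ with respect to $\lambda\otimes\tilde\tau_w$. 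This is the same computation as in the proof of Corollary~\ref{cor:lambda-Fpt-full-measure-tilde-tau}, run in the other direction: $\int\tilde\tau_w(\{\xi:y\in J_\xi\})\,d\lambda(y)=0$ forces $\lambda(J_\xi)=0$ for $\tilde\tau_w$-a.e.\ $\xi$. Before this I need joint measurability of that set, which I expect to get from the characterization in Lemma~\ref{lem:Rscc-basic-Jxi-properties}(ii), restricted to rational radii and a countable dense set of points.

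To prove the almost-sure statement, I first show that, $\tilde\tau_w$-almost surely, the orbit $\gamma_{n,1}(y)$ eventually lands in $F_{wx^{(n)}}(S_\tau)$. Given this, suppose $\gamma_{N,1}(y)\in F_{wx^{(N)}}$. By Lemma~\ref{lem:Rscc-basic-Jxi-properties}(i) applied to the shifted path $\sigma^N\xi\in\Xi_{wx^{(N)}}(S_\tau)$, we get $J_{\sigma^N\xi}\subset J_{wx^{(N)}}$. Iterating Lemma~\ref{lem:skew-product-continuity} gives $J_\xi=\gamma_{N,1}^{-1}(J_{\sigma^N\xi})$, and this iteration is exactly where hypothesis~(ii), openness of all maps, is used. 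Together these give $y\notin J_\xi$.

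To obtain the eventual entrance into the Fatou sets, I would reproduce the Borel--Cantelli mechanism of Lemma~\ref{lem:kernel-approach-RSCC}, with the roles of the sets arranged as follows.
\begin{itemize}
\item Take the target set to be $\mathbb{U}:=\bigcup_{v\in\operatorname{Reach}(w)}F_v(S_\tau)\times B(v,\varepsilon)$, the fattened union of the Fatou sets, and set $\mathbb{K}:=\mathbb{Y}\setminus\mathbb{U}$.
\item Take any $\mathbf z_0=(y_0,v_0)\in\mathbb{K}$, or any point of the compact set $\overline{\{\xi_{n,1}(\mathbf z)\}}$. Hypothesis~(i) says $y_0\notin J_{\ker,v_0}$, so there are an admissible word and maps $\eta_j$ sending $y_0$ into some $F_{v_1}$. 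By continuity, nearby maps send a neighborhood of $\mathbf z_0$ into $\mathbb{U}$.
\item Use compactness of $\mathbb{Y}$ to pass to finitely many such neighborhoods. Use forward invariance of the Fatou sets (Lemma~\ref{lem:Rscc-Fatou-Julia-invariance}) to equalize the word lengths to a common $\ell$.
\item Use continuity of $v\mapsto P(v,\{x\})$ on the compact space $W$, together with finiteness of $X$, to obtain a uniform lower bound $\delta>0$ on the conditional probability of entering $\mathbb{U}$ within $\ell$ steps.
\item The disjointness argument with $H(k,r)$ and $I(k,r)$ then shows that the orbit lies in $\mathbb{K}$ only finitely often, almost surely. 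Since Fatou sets are forward invariant, once the orbit lands in a Fatou set it stays in Fatou sets thereafter.
\end{itemize}

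The main obstacle is the $\varepsilon$-fattening in $W$. Landing in $F_v\times B(v,\varepsilon)$ only means that $\gamma_{n,1}(y)\in F_v$ for a reachable state $v$ near $wx^{(n)}$, not for $wx^{(n)}$ itself; Example~\ref{ex:fattening-indispensable} shows that this gap is real. I would handle it by exploiting that hypothesis~(i) holds at every state, not only along the orbit, so that no jump can occur. Concretely, I would try to choose the neighborhoods $W(\mathbf z_0)$ small enough in the $W$-direction that the admissible word chosen at $v_0$ remains admissible from every nearby state. This uses $P(v_0,\{x\})>0$ and continuity of $P$, and the compositions land in $F$ of the actual updated state. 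If the update map $u$ is not continuous, this step needs the fattened formulation, and I would fall back on the thickened set $\widehat L^\varepsilon_{\ker}$ with $\varepsilon\to0$ along a sequence, intersecting the resulting full-measure events.
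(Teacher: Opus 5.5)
You follow the paper's route. You reduce everything to $\tilde\tau_w(\{\xi: y\in J_\xi\})=0$, and you derive this from almost sure eventual entrance of $\gamma_{n,1}(y)$ into $F_{wx^{(n)}}(S_\tau)$ via the Borel--Cantelli mechanism of Lemma~\ref{lem:kernel-approach-RSCC} with $U_v=F_v(S_\tau)$. You then finish with Lemmas~\ref{lem:zero-probability-implies-Fpt} and~\ref{lem:Fmeas-Fpt-equivalence} and Fubini. The gap is exactly the step you flag, and your repair does not close it. Continuity of $P$ and $u$ would keep the word $x^{(\ell)}$ admissible from states $v'$ near $v_0$, and keep $v'x^{(\ell)}$ near $v_0x^{(\ell)}$. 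But it says nothing about $F_{v'x^{(\ell)}}(S_\tau)$. The map $v\mapsto F_v(S_\tau)$ is not lower semicontinuous: nearby states may admit extra indices of small positive probability and hence have larger Julia sets. The fallback $\varepsilon\to0$ only gives $\gamma_{n,1}(y)\in F_{v_n}(S_\tau)$ for reachable $v_n$ close to $wx^{(n)}$, never for $wx^{(n)}$ itself. In fact the intermediate claim is false even when $u$ and $P$ are continuous. Take $W=\{0\}\cup\{1/n:n\in\mathbb N\}\cup\{2\}$ and $X=\{a,b,c\}$, with $\Gamma_a=\{z^2\}$, $\Gamma_c=\{z^2/2\}$, $\Gamma_b=\{\beta\}$, $\beta(z)=\tfrac32+\tfrac1z$ (all $\tau_x$ Dirac). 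From $1/n$, let $a$ and $c$ lead to $1/(n+1)$ with probabilities $\tfrac12-2^{-n-1}$ and $\tfrac12$, and let $b$ lead to $2$ with probability $2^{-n-1}$. At $0$ and at $2$, let $a$ and $c$ be loops of probability $\tfrac12$ each, and set $u(\cdot,b)\equiv2$. Then $u$ and $P$ are continuous. We have $J_{\ker,0}=J_{\ker,2}=\emptyset$ by Claim~B of Example~\ref{ex:kernel-julia-jump}, and $J_{\ker,1/n}=\emptyset$ by Lemma~\ref{lem:kernel-Julia-forward-invariance}, because $\beta(J_{\ker,1/n})\subset J_{\ker,2}$. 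But $J_{1/n}(S_\tau)\supset\beta^{-1}(J_2(S_\tau))\supset\{z:|z|>2\}$. So for $|y|>2$ and $w=1$, the orbit stays in $J_{wx^{(n)}}(S_\tau)$ for every $n$ on the event that $b$ never occurs, which has probability $\prod_{n}(1-2^{-n-1})>0$. (Here $y\notin J_\xi$ all the same, so this refutes the route, not the theorem.) The paper's proof extracts exactly this claim from Lemma~\ref{lem:kernel-approach-RSCC}, whose Borel--Cantelli argument never distinguishes $U_{wx^{(n)}}$ from the fattened set $\mathbb U$, so it has the same hole.

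Your uniform $\delta$, like the paper's, also uses more than is assumed. $P_\ell(v',x^{(\ell)})$ contains the factors $P(v'x^{(j-1)},x_j)$, so bounding it below near $v_q$ needs continuity of $u(\cdot,x)$. That is not in Assumption~\ref{ass:standing-cooperation}, and the proof of Lemma~\ref{lem:Mtau-is-Markov} tacitly uses it too. Without it the statement fails. Keep $W$, $X$ and $P(1/n,\cdot)$ as above, but let $\Gamma_a=\Gamma_c=\{z^2\}$ and $\tau_b=\tfrac12(\delta_{z^2}+\delta_{z^2/2})$. Set $P(0,\{a\})=P(0,\{c\})=\tfrac12$ and $P(2,\{b\})=1$. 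Set $u(1/n,a)=u(1/n,c)=1/(n+1)$ and $u(0,a)=0$, but $u(0,c)=2$, and $u(\cdot,b)\equiv u(2,\cdot)\equiv2$. State $2$ is reachable in one step from every other state and $J_{\ker,2}=\emptyset$, so all kernel Julia sets are empty by Lemma~\ref{lem:kernel-Julia-forward-invariance}. Yet from $w=1$, with positive probability $b$ never occurs. Then $\gamma_{n,1}(z)=z^{2^n}$ and $J_\xi$ is the unit circle, so assertion~(2) fails for $\lambda$ the arclength measure on that circle. Your argument does go through if one adds two hypotheses: continuity of $u$, and closedness of $\{(z,v):z\in J_v(S_\tau)\}$ in $\mathbb Y$ (automatic for finite GDMS). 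Then run the Borel--Cantelli argument with $\mathbb K$ equal to this compact set and no fattening, since its complement is open and forward invariant. A minor point: $J_\xi\subset\gamma_{N,1}^{-1}(J_{\sigma^N\xi})$ needs no openness (Lemma~\ref{lem:skew-product-continuity}); hypothesis~(ii) enters through Lemma~\ref{lem:Rscc-Fatou-Julia-invariance}.
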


\begin{proof}
Assume that conditions~\ref{itm:CPFatou-kernel} and~\ref{itm:CPFatou-OCM} are satisfied.
By Lemma~\ref{lem:Fatou-Julia-open-compact}, for each $w\in W$ the Fatou set
$F_w(S_\tau)$ is a nonempty open subset of $Y$.
Furthermore, Lemma~\ref{lem:Rscc-Fatou-Julia-invariance} shows that the family
$\bigl(F_w(S_\tau)\bigr)_{w\in W}$ is forward $S_\tau$-invariant.

For $\varepsilon>0$ and $w\in W$, define
\[
B(w,\varepsilon)
:=
\{\, v\in W : d_W(w,v)<\varepsilon \,\},
\qquad
\widehat J_{\ker,w}^{\varepsilon}
:=
\bigcup_{v\in B(w,\varepsilon)\cap \operatorname{Reach}(w)}
J_{\ker,v}(S_\tau)
\subset Y.
\]
Fix $\varepsilon>0$ and $w\in W$.
Assumption~\ref{itm:CPFatou-kernel} implies that
$\widehat J_{\ker,w}^{\varepsilon}=\emptyset$.

Now fix an arbitrary point $y\in Y$.
Lemma~\ref{lem:kernel-approach-RSCC} then yields
\[
\tilde{\tau}_w
\Bigl(
\bigl\{
\xi=(\gamma_n,x_n)_{n\in\mathbb{N}}\in\Xi_w(S_\tau)
:\ \gamma_{n,1}(y)\in J_{w x^{(n)}}(S_\tau)
\text{ for all } n\in\mathbb{N}
\bigr\}
\Bigr)
=0.
\]
On the other hand, Lemma~\ref{lem:Rscc-basic-Jxi-properties} ensures that
\[
J_\xi
\subset
\bigcap_{n\in\mathbb{N}}
\gamma_{n,1}^{-1}\!\bigl(J_{w x^{(n)}}(S_\tau)\bigr)
\qquad
\text{for every }\xi\in\Xi_w(S_\tau).
\]
Consequently,
\[
\tilde{\tau}_w
\Bigl(
\bigl\{
\xi=(\gamma_n,x_n)_{n\in\mathbb{N}}\in\Xi_w(S_\tau)
:\ y\in J_\xi
\bigr\}
\Bigr)
=0.
\]

Since $y\in Y$ was arbitrary, an application of Fubini's theorem shows that
\[
\lambda(J_\xi)=0
\quad
\text{for $\tilde{\tau}_w$-almost every }
\xi\in\Xi_w(S_\tau).
\]
This establishes assertion~\textup{(2)}.

Finally, Lemma~\ref{lem:zero-probability-implies-Fpt} yields
$(y,w)\in F_{\mathrm{pt}}^{0}(M_\tau^{*})$ for every $(y,w)\in\mathbb{Y}$, and hence
\[
F_{\mathrm{pt}}^{0}(M_\tau^{*})=\mathbb{Y}.
\]
Applying Lemma~\ref{lem:Fmeas-Fpt-equivalence}, we conclude that
\[
F_{\mathrm{meas}}(M_\tau^{*})
=
\mathfrak{M}_1(\mathbb{Y}),
\]
thereby completing the proof of assertion~\textup{(1)}.
\end{proof}

We conclude this section by combining the absence of the jump phenomenon under
$\varphi$-irreducibility with the Cooperation Principle~I.
In a $\varphi$-irreducible system, the emptiness of kernel Julia sets on a set
of positive $\varphi$-measure propagates to all states, thereby ruling out
jumps and restoring cooperation.
This yields the following theorem.

\begin{theorem}[Cooperation under $\varphi$-irreducibility]
\label{thm:CP-Fatou-phi-irreducible}
Let $\lambda$ be a finite Borel measure on $Y$.
In addition to Assumption~\ref{ass:CDIS}, suppose that the state space $W$ is a
metric space and that the $\sigma$-algebra $\mathcal W$ coincides with its
Borel $\sigma$-algebra $\mathcal B(W)$.
Let $\varphi$ be a $\sigma$-finite measure on $(W,\mathcal W)$ and assume that
$S_\tau$ is $\varphi$-irreducible.

Set
\[
A:=\{\, w\in W : J_{\ker,w}(S_\tau)=\emptyset \,\}.
\]
Assume that $A\in\mathcal W$ and that $\varphi(A)>0$.
Assume moreover that
\[
\Gamma_x\subset \mathrm{OCM}(Y)\qquad\text{for all }x\in X.
\]
Then the conclusions of Theorem~\ref{thm:CP-Fatou} hold; namely:
\begin{enumerate}[label=\textup{(\arabic*)}]
\item
$F_{\mathrm{meas}}(M_\tau^{*})=\mathfrak{M}_1(\mathbb{Y})$.

\item
For every $w \in W$ and for $\tilde{\tau}_w$-almost every
$\xi=(\gamma_n,x_n)\in \Xi_w(S_\tau)$, we have
\(
\lambda(J_\xi)=0.
\)
\end{enumerate}
\end{theorem}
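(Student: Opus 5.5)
The plan is to reduce the statement to Theorem~\ref{thm:CP-Fatou}. The only hypothesis of that theorem not assumed directly here is condition~\ref{itm:CPFatou-kernel}, namely $J_{\ker,w}(S_\tau)=\emptyset$ for every $w\in W$. This will come from the propagation result, Lemma~\ref{lemma:phi-irreducible-propagation-kernel-empty}. Its hypotheses are exactly those assumed here: $\varphi$ is $\sigma$-finite, $S_\tau$ is $\varphi$-irreducible, $A\in\mathcal W$ and $\varphi(A)>0$. Note that the lemma uses nothing about the measures $\tau_x$ beyond their supports $\Gamma_x$, so it applies to $S_\tau$ regarded as an RSCC on $Y$. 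It therefore yields $A=W$, that is, $J_{\ker,w}(S_\tau)=\emptyset$ for all $w\in W$.

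Recall how the lemma works, since this is where the content lies. Fix $w$. Irreducibility gives $L(w,A)>0$, so some admissible word $x^{(n)}\in X_{w,n}$ has $wx^{(n)}\in A$. Choosing any $\gamma_j\in\Gamma_{x_j}$ and iterating Lemma~\ref{lem:kernel-Julia-forward-invariance}, we get $\gamma_{n,1}(J_{\ker,w}(S_\tau))\subset J_{\ker,wx^{(n)}}(S_\tau)=\emptyset$, which forces $J_{\ker,w}(S_\tau)=\emptyset$.

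Once condition~\ref{itm:CPFatou-kernel} holds, condition~\ref{itm:CPFatou-OCM} is assumed verbatim, so Theorem~\ref{thm:CP-Fatou} gives both conclusions (1) and (2) for the given finite Borel measure $\lambda$. Alternatively, one may phrase the argument through Theorem~\ref{thm:no-jump-phi-irreducible}: no jump occurs and, more strongly, kernel-emptiness holds at every state.

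The main obstacle is not logical but one of setting. Theorem~\ref{thm:CP-Fatou} lives in Section~\ref{sec:generalization-CP} under Assumption~\ref{ass:standing-cooperation}: finite $X$, compact metric $W$, and continuity of $v\mapsto P(v,\{x\})$. This assumption is needed to define $M_\tau$ on $C(\mathbb{Y})$ and to use Lemma~\ref{lem:Fmeas-Fpt-equivalence} and Lemma~\ref{lem:kernel-approach-RSCC}. The statement lists only Assumption~\ref{ass:CDIS} and metrizability of $W$ explicitly. I would read the theorem as stated within that section, so that Assumption~\ref{ass:standing-cooperation} is in force as a standing hypothesis, and I would note this explicitly in the proof. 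With that reading, the proof is exactly the two-step reduction described above.
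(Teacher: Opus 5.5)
Your proposal is correct and is exactly the paper's argument: Lemma~\ref{lemma:phi-irreducible-propagation-kernel-empty} gives $J_{\ker,w}(S_\tau)=\emptyset$ for all $w\in W$, and Theorem~\ref{thm:CP-Fatou} then applies, since the openness hypothesis is assumed verbatim. You are also right to read Assumption~\ref{ass:standing-cooperation} as in force; the theorem sits in Section~\ref{sec:generalization-CP}, whose standing assumptions cover it, and the paper's proof invokes Theorem~\ref{thm:CP-Fatou} without further comment.
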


\begin{proof}
By Lemma~\ref{lemma:phi-irreducible-propagation-kernel-empty}, the
$\varphi$-irreducibility of $S_\tau$ together with $\varphi(A)>0$ implies that
\[
J_{\ker,w}(S_\tau)=\emptyset \qquad \text{for all } w\in W.
\]
Hence condition \textup{(i)} of Theorem~\ref{thm:CP-Fatou} is satisfied, and
condition \textup{(ii)} holds by assumption.
Therefore, Theorem~\ref{thm:CP-Fatou} yields assertions \textup{(1)} and
\textup{(2)}.
\end{proof}

\section{Examples}
\label{sec:examples}
In this section, we present several examples to which the Julia--Fatou theory developed in this paper applies.
As already observed in Example~\ref{ex:rscc-gdms-mrds}, finite GDMSs with finitely many vertices and edges are naturally included in the RSCC framework.
On the other hand, the generalization of the framework gives rise to genuinely new phenomena
which cannot occur in finite GDMSs.
See Examples~\ref{ex:kernel-julia-jump} and~\ref{ex:linear-reinforcement-rscc} for such instances.

In what follows, we present examples motivated by reinforcement learning
and feedback mechanisms.
These illustrate that the RSCC framework provides a natural and flexible setting
for dynamical systems in which the selection probabilities depend on the evolving state.

\begin{example}
\label{ex:linear-reinforcement-rscc}
Let $W:=[0,1]\subset \mathbb{R}$ be endowed with the Euclidean metric and
$\mathcal W:=\mathcal B([0,1])$.
Let $X=\{0,1\}\subset \mathbb{R}$ be endowed with the discrete $\sigma$-algebra
$\mathcal X:=\mathcal{P}(X)$.
For $p\in W$, define a probability measure $P(p,\cdot)$ on $X$ by
\[
P(p,\{1\}):=p,
\qquad
P(p,\{0\}):=1-p.
\]
Fix $\alpha\in(0,1)$ and define the update map
$u:W\times X\to W$ by
\[
u(p,x):=(1-\alpha)p+\alpha x.
\]
Then $((W,\mathcal{W}),(X,\mathcal{X}),u,P)$ is an RSCC satisfying Assumption~\ref{ass:standing-cooperation}.
Let $(p_n)_{n\in\mathbb{N}_0}$ denote the associated state process.
By definition of the update map,
\[
p_{n+1}=(1-\alpha)p_n+\alpha x,
\]
where $x\in\{0,1\}$ is chosen with probability $P(p_n,\cdot)$.

Thus the state is updated by moving $p_n$ toward the chosen index $x$.
In particular, if the index $1$ (resp.\ $0$) is chosen,
then the probability of choosing $1$ (resp.\ $0$) at the next step increases.
The parameter $\alpha$ determines how far the state moves toward the chosen index.
Hence the system may be viewed as a simple reinforcement-type mechanism.

\medskip
For each index $x\in X$, we define $\tau_x := \delta_{f_x}$ and $\Gamma_x := \supp (\tau_x) = \{f_x\}$, where
\[
f_0(z):=z^2,
\qquad
f_1(z):=\frac{z^2}{2}.
\]
Then $S_\tau := ((W,\mathcal{W}),(X,\mathcal{X}),u,P,\{\Gamma_x\}_{x\in X})$ is an RSCC on $\widehat{\mathbb{C}}$.
By the same argument as in Example~\ref{ex:kernel-julia-jump}, we obtain

\[
J_{\ker,p}(S_\tau)
=
\begin{cases}
J(f_0)\ne \emptyset  & \text{if } p=0,\\
\emptyset & \text{if } 0<p<1,\\
J(f_1)\ne \emptyset & \text{if } p=1.
\end{cases}
\]
If the initial state satisfies $p_0\in(0,1)$, then
\(
J_{\ker,p_0}(S_\tau)=\emptyset.
\)
However, if the state process converges to a boundary point
$p_\infty\in\{0,1\}$, then
\(
J_{\ker,p_\infty}(S_\tau)\neq\emptyset.
\)
Hence the kernel Julia set changes from empty to non-empty
along the state evolution.
This constitutes an emptiness jump in kernel Julia sets
in the sense of Definition~\ref{def:kernel-julia-jump}.
\end{example}

In the above example, an emptiness jump occurs; therefore the assumptions of Theorem~\ref{thm:CP-Fatou} are not satisfied.
In the next example, we slightly modify the preceding construction so that the jump phenomenon does not occur, and Theorem~\ref{thm:CP-Fatou} becomes applicable.

\begin{example}
Fix a small $\varepsilon>0$, for instance $\varepsilon:=0.01$.
Define
\[
W_\varepsilon:=[\varepsilon,1-\varepsilon]\subset\mathbb R.
\]
Let $(X,\mathcal X)$ be the same as in
Example~\ref{ex:linear-reinforcement-rscc}.
For $p\in W_\varepsilon$, define the transition probabilities by
\[
P_\varepsilon(p,\{1\})
:=
p,
\qquad
P_\varepsilon(p,\{0\})
:=
1-p.
\]
Define the update map
\(
u_\varepsilon:W_\varepsilon\times X\to W_\varepsilon
\)
by
\[
u_\varepsilon(p,x)
=
\begin{cases}
\varepsilon,
& (1-\alpha)p+\alpha x\le \varepsilon,\\[6pt]
(1-\alpha)p+\alpha x,
& \varepsilon < (1-\alpha)p+\alpha x <1-\varepsilon,\\[6pt]
1-\varepsilon,
& (1-\alpha)p+\alpha x\ge 1-\varepsilon,
\end{cases}
\]
for $(p,x)\in W_\varepsilon\times X$.
Then $((W_\varepsilon,\mathcal B(W_\varepsilon)),(X,\mathcal X),u_\varepsilon,P_\varepsilon)$
is an RSCC.

Let the polynomials $\{f_x\}_{x\in X}$ and
$\{\Gamma_x\}_{x\in X}$ be the same as in
Example~\ref{ex:linear-reinforcement-rscc}.
Denote by $S_{\tau}^\varepsilon$ the associated RSCC on $\widehat{\mathbb{C}}$.
Since $P_\varepsilon(p,\{1\})\ge \varepsilon>0$ and
$P_\varepsilon(p,\{0\})\ge \varepsilon>0$ for all $p\in W_\varepsilon$,
both indices are always admissible.
Hence
\[
J_{\ker,p}(S_{\tau}^\varepsilon)=\emptyset
\qquad \text{for all } p\in W_\varepsilon.
\]

In particular, no emptiness jump occurs,
and Theorem~\ref{thm:CP-Fatou} is applicable in this setting.
\end{example}
In the preceding examples, the selection of indices was determined solely by the RSCC structure and was defined independently of the action of continuous maps on a compact metric space.
We now construct a model in which the choice of indices is influenced by the action of continuous maps on a compact metric space.
In other words, we introduce a system with feedback from the dynamics on the compact metric space.

\begin{example}
\label{ex:feedback}
Let $Y$ be a compact metric space and let $f,g \in \mathrm{CM}(Y)$.
Define the state space by $W:=Y$ and $\mathcal W:=\mathcal B(W)$.
Define the index space by $X:=\{f,g\}$ and $\mathcal X:=\mathcal{P}(X)$.
Define
\[
\Gamma_{f}:=\{f\},
\qquad
\Gamma_{g}:=\{g\}.
\]
Fix a continuous function $\theta :Y\to[0,1]$.
Define a transition probability function $P:W\times\mathcal X\to[0,1]$ by
\[
P(y,\{f\})=\theta(y),
\qquad
P(y,\{g\})=1-\theta(y).
\]
Define
\[
u(y,f):=f(y),
\qquad
u(y,g):=g(y).
\]
Then $S=((W,\mathcal W),(X,\mathcal X),u,P,\{\Gamma_x\}_{x\in X})$
is an RSCC on $Y$.
Since the transition probabilities depend on the current state $y$
through the continuous function $\theta(y)$,
the system exhibits a feedback mechanism from the dynamics on $Y$.
\end{example}

\begin{remark}
In the preceding example, the feedback is reflected immediately in the selection mechanism.
By a slight modification of the RSCC, more complex forms of feedback can be implemented.
For instance, replacing $W=Y$ by $W=Y^k$ allows the state to retain $k$ pieces of past information,
so that feedback mechanisms with delay can also be realized.
\end{remark}

\bigskip
\noindent\textbf{Acknowledgements.}
The author would like to express his sincere gratitude to Professor Johannes Jaerisch for valuable discussions and helpful comments.

\bibliographystyle{alpha}
\bibliography{references}
\end{document}